\documentclass[11pt, a4paper]{article}
\usepackage{afterpage}
\usepackage{algorithm}
\usepackage{algorithmicx}
\usepackage{algpseudocode}
 \usepackage{authblk}
\usepackage{amsmath}
\usepackage[title]{appendix}
\usepackage{amsfonts}
\usepackage{booktabs} 
\usepackage[font=small,labelfont=bf, hypcap=false]{caption} 
\usepackage{chngcntr}
\usepackage{comment} 
\usepackage{csquotes} 
\usepackage{csvsimple} 
\usepackage{diagbox}
\usepackage{enumitem}
\usepackage{eqnarray}
\usepackage{float} 
\usepackage[T1]{fontenc} 
\usepackage{geometry}
\usepackage{graphicx}
\usepackage[colorlinks=true, allcolors=blue]{hyperref}
\usepackage[none]{hyphenat} 
\usepackage{lineno} 
\usepackage{listings}
\usepackage{lipsum}
\usepackage{longtable}
\usepackage{lscape}
\usepackage{mathrsfs}
\usepackage{multirow}
\usepackage{orcidlink}
\usepackage{pdflscape}
\usepackage[]{ragged2e}
\usepackage{subcaption}
\usepackage{setspace} 
\usepackage{threeparttable} 
\usepackage{tabularray}
\usepackage{titlesec} 
\usepackage{cite}

\geometry{a4paper,
        top=1in,
        bottom=1in,
        left=1in,
        right=1in,
        marginparwidth=0.5in
        }
        
\onehalfspacing 
\titleformat{\section} 
{\normalfont\Large\bfseries}{\thesection.}{1em}{}



\captionsetup[table]{position=top} 

\makeatletter

\makeatother

\newcommand{\norm}[1]{\left\| #1 \right\|}

\graphicspath{{fig/}{fig/2d/}{fig/3d/}}

\usepackage{amssymb}
\usepackage{amsmath}
 \usepackage{amsthm} 
\usepackage{amsbsy}
\usepackage{color}
\newtheorem{theorem}{Theorem}[section]
\newtheorem{lemma}[theorem]{Lemma}
\newtheorem{definition}[theorem]{Definition}

\newtheorem{remark}[theorem]{Remark}

\numberwithin{equation}{section}

\usepackage{bm}
\usepackage{stmaryrd}
\usepackage{xparse}




\title{An FDM-sFEM scheme on time-space manifolds and its superconvergence analysis}

\author[1]{Chengrun Jiang}
\affil[1]{Department of Mathematical Sciences, Tsinghua University, Beijing, 100084,  China (\href{mailto:jcr22@mails.tsinghua.edu.cn}{jcr22@mails.tsinghua.edu.cn})}

\author[2]{Guozhi Dong}
\affil[2]{School of Mathematics and Statistics, HNP-LAMA,  Central South University, Changsha 410083, China (\href{mailto:guozhi.dong@csu.edu.cn}{guozhi.dong@csu.edu.cn})}

\author[3]{Hailong Guo}
\affil[3]{School of Mathematics and Statistics,  The University of Melbourne,  Parkville, VIC 3010, Australia (\href{mailto:hailong.guo@unimelb.edu.au}{hailong.guo@unimelb.edu.au})}

\author[4,5]{Zuoqiang Shi}
\affil[4]{Yau Mathematical Sciences Center, Tsinghua University, Beijing, 100084, China (\href{mailto:zqshi@tsinghua.edu.cn}{zqshi@tsinghua.edu.cn})}
\affil[5]{Yanqi Lake Beijing Institute of Mathematical Sciences and Applications, Beijing, 101408, China}

\date{}

\begin{document}
\newpage
\maketitle
\begin{abstract}
We study superconvergent discretization of the Laplace–Beltrami operator on time-space product manifolds with Neumann temporal boundary values, which arise in the context of dynamic optimal transport on general surfaces. We propose a coupled scheme that combines finite difference methods in time with surface finite element methods in space. By establishing a new summation by parts formula and proving the supercloseness of the semi-discrete solution, we derive superconvergence results for the recovered gradient via post-processing techniques. In addition, our geometric error analysis is implemented within a novel framework based on the approximation of the Riemannian metric. Several numerical examples are provided to validate and illustrate the theoretical results.

\end{abstract}

\noindent\textbf{Keywords}: gradient recovery, post-processing, supercloseness, superconvergence, time-space product manifolds, Laplace–Beltrami, Riemannian metric


\section{Introduction}\label{sec:int}
Let $(\mathcal{M},g)$ be a $2$-dimensional connected, oriented, compact, and $C^3$-smooth Riemannian manifold, where $g$ denotes the Riemannian metric tensor. In this work, we consider the Laplace--Beltrami type equation with Neumann boundary condition posed on the product manifold $\mathcal{T} \times \mathcal{M}$ with $\mathcal{T} = (0,1)$, as follows:
\begin{equation}	\label{equ:model}
	\left\{\begin{aligned}
		&-\partial_{tt} u - \Delta_{g} u = f, && (t,x)\in \mathcal{T}\times \mathcal{M},\\
		&\partial_t u(0,x)=\mu_0(x), &&x\in \mathcal{M},\\
            &\partial_t u(1,x)=\mu_1(x), &&x\in \mathcal{M},
	\end{aligned}
	\right.
\end{equation}
Here, $\Delta_g$ denotes the Laplace–Beltrami operator on the manifold $\mathcal{M}$.

Such equations arise naturally in the context of the dynamic formulation of optimal transport problems on manifolds \cite{lavenant2018discreteot, dong2025ot, yu2023meanfieldmanifold, grange2023otsurface}. Based on the dynamic formulation introduced by Benamou and Brenier \cite{benamou2000cfdot}, the Wasserstein distance can be reformulated as a minimization problem over density functions defined on the time-space product manifold $\mathcal{T} \times \mathcal{M}$. This optimization problem is typically addressed using the alternating direction method of multipliers (ADMM) \cite{boyd2010admm}. A key computational step in ADMM involves solving the Laplace–Beltrami type equation \eqref{equ:model}. In particular, in our recent work \cite{dong2025ot}, we presented a superconvergent post-processing based justification for the empirical observation that ADMM remains convergent even after applying successive gradient and divergence operators to a linear finite element function. Furthermore, we proposed a gradient enhanced ADMM scheme that incorporates a gradient recovery procedure. The main objective of this paper is to establish the superconvergence theory for the gradient recovery operator on the product manifold and to provide a theoretical justification for the gradient enhanced ADMM algorithm developed in \cite{dong2025ot}.

The study of superconvergence theory dates back to the 1970s \cite{wahlbin1995superconvergencebook, chen1995high}, when it was observed that certain special points—referred to as superconvergent points—exhibit convergence rates higher than the optimal rates expected from polynomial interpolation. Post-processing techniques are among the most ubiquitous approaches for obtaining superconvergent results. Gradient recovery aims to construct a superconvergent discrete gradient by performing post-processing on the computed solution data. For problems defined on planar domains, this research area has reached a stage of maturity, with well-established theoretical foundations for superconvergence. Notable examples include the simple/weighted averaging method \cite{cc2002average}, the superconvergent patch recovery (SPR) \cite{zhu1992spr1, zhu1992spr2}, and the polynomial preserving recovery (PPR) \cite{naga2005ppr2d3d, zhang2005ppr, guo2017hessianrecovery}.
Recently, there has been growing interest in extending gradient recovery techniques to manifold settings \cite{du2005spr, wei2010sprsurface, dong2020pppr, dong2024super}. In \cite{dong2020pppr,dong2024super}, two of the authors introduced the parametric polynomial preserving recovery (PPPR) from an intrinsic geometric viewpoint and established its second-order consistency without requiring mesh symmetry or some a priori knowledge of the exact manifolds, including their tangential planes (or normal vectors).
For a comprehensive overview of  recovery techniques for post-processing purposes, interested readers are referred to the recent review paper \cite{guo2025recovery}.

For the Laplace–Beltrami type equation \eqref{equ:model} posed on the product manifold $\mathcal{T} \times \mathcal{M}$, it is not straightforward to apply standard finite element discretizations as in the planar case \cite{benamou2000cfdot}. To address this, we adopt a finite difference method for the temporal variable and a surface finite element method for the spatial variable, resulting in a coupled finite difference and surface finite element scheme (FDM-sFEM) \cite{dong2025ot}. We note that the topic of numerical solutions of partial differential equations on manifolds is itself a rapidly developing field; see, for example, \cite{dziuk1988sfem,dziuk2013review,dong2017,  bonito2019review, demlow2009highorder, guo2020scr,cai2024linear}. To obtain a superconvergent approximation of the gradient, we perform post-processing on the computed solution using the PPR in the temporal interval and the PPPR on the spatial manifold.

The primary contribution of this work is the development of a superconvergence theory on product manifolds. A key challenge in establishing such results lies in the absence of supercloseness estimates for the FDM–sFEM discretization. Moreover, the partial boundary condition is given by temporal nonhomogeneous Neumann conditions, which introduces additional complications. To address this, we derive a novel summation by parts formula that naturally integrates with the ghost penalty approach for handling Neumann boundary conditions. This new tool enables us to rigorously establish supercloseness results in the temporal direction. To overcome the gap in the spatial discretization, we construct an intermediate interpolation of the semi-discrete solution, for which the supercloseness is proved.

Another contribution of this work is the development of a novel geometric error analysis framework for sFEMs based on the Riemannian metric. Compared to existing geometric error analysis techniques for sFEMs, our approach considers differential operators on manifolds via their Riemannian metric tensors, avoiding the tangential projections, which are adopted by most works in the literature. There, the surface gradient is calculated by first extending the function in its ambient space, calculating the Euclidean gradient in ambient space, and then projecting into the tangent spaces.  
A key advantage of our approach is that it applies to more general geometric approximations and does not require exact information of the geometry. For instance, results in the literature often require the triangulated approximation to be an interpolation of the exact surface, and the normal vectors of the surface at the vertices are given in order to have the tangential projection, see, e.g., the seminal works \cite{dziuk1988sfem, dziuk2013review, demlow2009highorder}. 
In fact, the geometric error is more transparent to analyze using our approach: all geometric computations are systematically pulled back to local parametric domains.
Then our error analysis is built on approximations of Riemannian metric tensors (including their derivatives), and requires neither the exact vertices nor the normal vectors in the calculus.

The remainder of the paper is organized as follows. In Section~\ref{sec:pre}, we present a background on geometry and function spaces defined in the manifolds. Section~\ref{sec:met} is devoted to the numerical methods for the model equation, along with a brief review of gradient recovery techniques. In Section~\ref{sec:sup}, we first establish supercloseness results for the FDM-sFEM scheme and then demonstrate the superconvergence of the recovered gradient. Section~\ref{sec:num} presents numerical examples that support the theoretical findings. Finally, concluding remarks are given in Section~\ref{sec:con}.

\section{Preliminary}\label{sec:pre}
Throughout this work, the symbol $C$ (or $c$) denotes a generic positive constant that is independent of the mesh parameter $h$, whose value may differ at different occurrences. For notational convenience, we write $x \leq C y$ (or $x \geq C y$) as $x \lesssim y$ (or $x \gtrsim y$), respectively.

Our consideration begins with the time-space product manifold $\mathcal{T}\times \mathcal{M}$, where $\mathcal{T}=(0,1)$ is a time interval and $ \mathcal{M}\subset \mathbb{R}^n$ is a Riemannian manifold endowed with metric $g$.
The target is then developing numerical methods for discretizing the Laplace-Beltrami operator on this product manifold.
Due to the special structure of the product manifold, standard sFEMs from the literature \cite{dziuk1988sfem,demlow2009highorder} may not be optimal in efficiency. 
In this work, we combine the FDM in time and the sFEM in space for efficient numerical solutions.
While FDM is rather straight forward, the following introduction mostly focuses on the spatial manifold $\mathcal{M}$ and its approximations.

Let $\mathcal{M}_h = \bigcup\limits_{j \in J} \mathcal{M}_h^j$ be a triangulated approximation of the manifold $\mathcal{M}$, where $h$ denotes the size of the largest triangle and $J \subset \mathbb{N}$ denotes the index set. Its curved counterpart is denoted by $\mathcal{M} = \bigcup\limits_{j \in J}{ \mathcal{M}^j }$. From the definition of Riemannian manifolds, there exists a parametric domain $\Omega_h = \bigcup\limits_{j \in J} \Omega_h^j$ such that each patch $\mathcal{M}^j$ is locally diffeomorphic to $\Omega_h^j$. Let $\pi : \Omega_h \to \mathcal{M}$ denote this diffeomorphism. Then, the Riemannian metric $g$ can be computed via the pullback relation:
\begin{equation}\label{equ:metric}
g \circ \pi = (\partial \pi)^\top \partial \pi,
\end{equation}
where $\partial \pi$ denotes the Jacobian matrix of the mapping $\pi$.
Analogously, there exists a diffeomorphism $\pi_h : \Omega_h \rightarrow \mathcal{M}_h$, and the discrete Riemannian metric $g_h$ is given by
\begin{equation}\label{equ:discrete_metric}
g_h \circ \pi_h = (\partial \pi_h)^\top \partial \pi_h.
\end{equation}

It is worth noting that given $\Omega_h$ the mapping $\pi : \Omega_h \rightarrow \mathcal{M}$ is non-unique. However, the metric tensor $g$ is invariant with respect to the choice of $\pi$. In this work, we choose $\Omega_h$ to consist of the triangle faces (hyperplanes) of the triangular surface $\mathcal{M}_h$, and adopt the commonly used geometric mapping $\pi : \Omega_h \rightarrow \mathcal{M}$ defined by
\begin{equation}
\pi(x) = x - d(x)\nu(x), \quad \text{for all } x \in \Omega_h,
\label{equ:construction_of_pi(x)}
\end{equation}
where $\nu(x) = \nu(\pi(x))$ denotes the unit outward normal vector at the projected point $\pi(x) \in \mathcal{M}$, and $d(x)$ is the signed distance function to the manifold $\mathcal{M}$.

For the sake of simplicity, only linear approximation of the curved triangle $\mathcal{M}^j$ will be used, and higher-order approximation can be considered similarly as in \cite{dong2025dg}. Let $\left\{x_i\right\}_{i\in I}$ and $\left\{x_{i,h}\right\}_{i\in I}$  denote the set of vertices of the (curved) triangles in $\mathcal{M}$ and $\mathcal{M}_h$, respectively. To estimate the geometric error, we assume that
\begin{equation}\label{equ:mesh_assumption}
\max_{i\in I} | x_i - x_{i,h}| \lesssim h^2\; \text{ and } \;
\max_{i\in I} | P_{\Omega_h}x_i - P_{\Omega_h}x_{i,h}| \lesssim h^3,\;
\end{equation}
where $P_{\Omega_h}$ denotes the operation of projecting the vectors onto the common patch-wise parametric domain $\Omega_h$.

\begin{remark}
If $\mathcal{M}_h$ is obtained by the piecewise linear interpolation of the surface, as in the geometric error analysis in \cite{dziuk1988sfem, dziuk2013review, demlow2009highorder}, then we have $| x_i - x_{i,h}|=0$ for all $i \in I$, and the assumption \eqref{equ:mesh_assumption} is obviously satisfied.
\end{remark}

Based on assumption \eqref{equ:mesh_assumption}, the following approximation results for the Riemannian metric were established in \cite{dong2025dg}.

\begin{theorem}
Let \eqref{equ:mesh_assumption} be satisfied. Let $g$ be the  metric tensor of $\mathcal{M}$, and let $g_h$ denote the metric tensor of $\mathcal{M}_h$, which is a continuous piecewise linear approximation of $\mathcal{M}$. Then the following error bounds hold:
\begin{equation}\label{equ:metrictensor_approximation}
\begin{aligned}
&\left\|g^{-1}(g-g_h)\right\|_{L^{\infty}}\lesssim h^2,\quad \text{ and }\quad
&\left\|\frac{\sqrt{|g|}-\sqrt{|g_h|}}{\sqrt{|g|}}\right\|_{L^{\infty}}\lesssim h^2.
\end{aligned}
\end{equation}
\label{thm:metrictensor_approximation}
\end{theorem}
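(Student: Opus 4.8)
The plan is to pull both metrics back to the shared parametric domain $\Omega_h$ and to compare them there as $2\times 2$ symmetric matrix fields. Setting $\tilde g := g\circ\pi = (\partial\pi)^\top\partial\pi$ and $\tilde g_h := g_h\circ\pi_h = (\partial\pi_h)^\top\partial\pi_h$ from \eqref{equ:metric}--\eqref{equ:discrete_metric}, the difference factors as
\[
\tilde g - \tilde g_h = (\partial\pi-\partial\pi_h)^\top\partial\pi + (\partial\pi_h)^\top(\partial\pi-\partial\pi_h),
\]
so, since $\partial\pi$ and $\partial\pi_h$ are uniformly $O(1)$, the whole estimate reduces to a patchwise control of the Jacobian difference $\partial\pi-\partial\pi_h$ acting on the tangent directions of each flat triangle.

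First I would analyze $\partial\pi$ from the explicit construction $\pi(x)=x-d(x)\nu(x)$ in \eqref{equ:construction_of_pi(x)}. Since $\mathcal M$ is compact and $C^3$, the signed distance $d$ is $C^2$ in a tubular neighborhood with $\nabla d=\nu\circ\pi$ and a uniformly bounded Weingarten map $\partial\nu$; moreover, on the secant triangle one has $\|d\|_{L^\infty}\lesssim h^2$. For a unit tangent vector $v$ of the flat triangle, differentiation gives $\partial\pi\, v = v - (\nabla d\cdot v)\,\nu - d\,\partial_v\nu$, where $\nabla d\cdot v = O(h)$ (the chord normal differs from $\nu$ by $O(h)$) and $d=O(h^2)$.

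The crux, and the step I expect to be the main obstacle, is to show that the resulting first fundamental form differs from the Euclidean one only at order $h^2$, rather than the naively expected order $h$. Indeed, forming the symmetric product $\tilde g(v,w)=\partial\pi\, v\cdot\partial\pi\, w$ cancels the antisymmetric $O(h)$ tilt along $\nu$ to leading order: the cross terms collapse into $-(\nabla d\cdot v)(\nabla d\cdot w)=O(h^2)$, while the $d\,\partial_v\nu$ contributions are $O(h^2)$ directly. Thus $\tilde g(v,w)=v\cdot w + O(h^2)$, and the same computation applied to the affine map $\pi_h$ yields $\tilde g_h(v,w)=v\cdot w + O(h^2)$ on each patch. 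Here the sharper tangential assumption $\max_i|P_{\Omega_h}x_i-P_{\Omega_h}x_{i,h}|\lesssim h^3$ in \eqref{equ:mesh_assumption} is essential: it guarantees that the in-plane positions of the exact and discrete vertices agree to $O(h^3)$, so that the $O(h^2)$ vertex discrepancy $|x_i-x_{i,h}|$ is carried almost entirely in the normal direction and enters the comparison only through the quadratic channel, keeping $\partial\pi-\partial\pi_h$ effectively $O(h^2)$ in the tangential metric.

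Assembling the patchwise bounds gives $\|\tilde g-\tilde g_h\|_{L^\infty}\lesssim h^2$. Because $g$ is uniformly positive definite on the compact manifold, $g^{-1}$ is uniformly bounded, which yields the first estimate $\|g^{-1}(g-g_h)\|_{L^\infty}\lesssim h^2$. For the volume ratio I would invoke Jacobi's formula for the determinant: to first order
\[
\sqrt{|g|}-\sqrt{|g_h|} = \tfrac12\sqrt{|g|}\,\operatorname{tr}\!\big(g^{-1}(g-g_h)\big) + O\big(\|g-g_h\|^2\big),
\]
so dividing by $\sqrt{|g|}$ and inserting the first estimate gives $\big\|(\sqrt{|g|}-\sqrt{|g_h|})/\sqrt{|g|}\big\|_{L^\infty}\lesssim h^2$, completing the sketch.
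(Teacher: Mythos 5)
The paper itself never proves Theorem~\ref{thm:metrictensor_approximation}: it imports the result from \cite{dong2025dg}, so there is no internal proof to compare your attempt against, and your argument must be judged on its own. On that basis it is essentially correct, and it follows what is surely the intended metric-based route. Your two key observations are exactly the right ones: (i) the naive reduction to bounding $\norm{\partial\pi-\partial\pi_h}$ is insufficient, since that difference is only $O(h)$; and (ii) the $O(h)$ defect is carried entirely by the normal direction, so it enters the symmetric product $(\partial\pi)^\top\partial\pi$ only quadratically. Concretely, with $\pi(x)=x-d(x)\nabla d(x)$ and $\pi_h=\mathrm{id}$ on $\Omega_h$, one has $\partial\pi\,v=v-(\nabla d\cdot v)\nabla d-d\,D^2d\,v$; since $D^2d\,\nabla d=0$, the cross terms collapse to $-(\nabla d\cdot v)(\nabla d\cdot w)=O(h^2)$, and $\norm{d}_{L^\infty(\Omega_h)}\lesssim h^2$ controls the curvature term, giving $\tilde g=I+O(h^2)=\tilde g_h+O(h^2)$. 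The determinant bound via $\det(I-E)=1-\operatorname{tr}E+O\left(\norm{E}^2\right)$ with $E=g^{-1}(g-g_h)$ is also fine.

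The one substantive criticism concerns your reading of assumption \eqref{equ:mesh_assumption}. Your claim that the tangential $O(h^3)$ condition is ``essential'' to this theorem is an over-claim, and the mechanism you attribute to it is not the one at work. Because $\pi$ is the closest-point projection, $\pi-\pi_h=-d\,\nabla d$ is purely normal \emph{by construction}, irrespective of where the vertices sit; what your computation actually needs is only (a) $\norm{d}_{L^\infty(\Omega_h)}\lesssim h^2$ and (b) $|\nabla d\cdot v|\lesssim h$ for unit vectors $v$ in the triangle planes. Both follow from the first half of \eqref{equ:mesh_assumption}, $\max_i|x_i-x_{i,h}|\lesssim h^2$, together with shape regularity: perturbing the vertices of a shape-regular triangle of diameter $h$ by $O(h^2)$ tilts its plane by only $O(h)$ and keeps it within $O(h^2)$ of $\mathcal{M}$. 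A tangential $O(h^2)$ vertex offset merely shifts which curved patch $\pi(\Omega_h^j)$ sits over $\Omega_h^j$; it does not degrade the pointwise metric error. (The $O(h^3)$ condition is presumably needed in \cite{dong2025dg} for finer results, e.g.\ estimates on derivatives of the geometric error or the supercloseness machinery, not for \eqref{equ:metrictensor_approximation}.) Relatedly, you quote $\norm{d}_{L^\infty}\lesssim h^2$ and the $O(h)$ tilt ``on the secant triangle''; in this paper the triangles are not secant, so to make the sketch airtight you should state explicitly that both facts survive the $O(h^2)$ vertex perturbation permitted by \eqref{equ:mesh_assumption} --- which they do, for the reason just given.
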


To obtain the superconvergent results, we should impose some constraints on the triangular mesh $\mathcal{M}_h$. Two adjacent triangles form an $\mathcal{O}(h^2)$ parallelogram if the lengths of their opposite edges differ by $\mathcal{O}(h^2)$. In the following, we assume that the triangular mesh $\mathcal{M}_h$ satisfies the following condition.

\begin{definition}
The triangulation mesh $\mathcal{M}_h$ is said to satisfy the $\mathcal{O}(h^{2\sigma})$ irregular condition if there exist a partition $\mathcal{M}_{1,h} \cup \mathcal{M}_{2,h}$ of $\mathcal{M}_h$ and a positive constant $\sigma$ such that every two adjacent triangles in $\mathcal{M}_{1,h}$ form an $\mathcal{O}(h^2)$ parallelogram, and  $ \sum\limits_{\mathcal{M}_h^j\in \mathcal{M}_{2,h}}|\mathcal{M}_h^j| = \mathcal{O}(h^{2\sigma})$.
\end{definition}

Let $\mathcal{V}(\mathcal{M})$ and $\mathcal{V}(\mathcal{M}_h)$ denote the ansatz function spaces on $\mathcal{M}$ and $\mathcal{M}_h$, where $\mathcal{V}$ may be taken as a Sobolev space such as $H^k$. We define a bijective transformation operator between $\mathcal{V}(\mathcal{M})$ and $\mathcal{V}(\mathcal{M}_h)$ as
\begin{equation}
\begin{aligned}
T_h: &\mathcal{V}(\mathcal{M})\rightarrow\mathcal{V}(\mathcal{M}_h),\\
&\omega\mapsto \omega \circ P_h,
\end{aligned}
\nonumber
\end{equation}
where $P_h=\pi\, \circ\, \pi_h^{-1}$ is a continuous and bijective projection map between each pair of corresponding elements $\mathcal{M}^j$ and $\mathcal{M}^j_h$.  Its inverse, denoted by $T_h^{-1}$,  is the bijective transformation operator from $\mathcal{V}(\mathcal{M}_h) $ to $\mathcal{V}(\mathcal{M})$.

For the transformation operator $T_h$, \cite{dziuk1988sfem,dong2020pppr} established the following result:

\begin{lemma}\label{lem:boundedness_of_T_h}
For a fixed $k \in \mathbb{N}$ and $p \geq 1$, let $\mathcal{V}(\mathcal{M}) \hookrightarrow W^{k,p}(\mathcal{M})$. 
If the smoothness of $\mathcal{M}_h$ is compatible with the regularity of $W^{k,p}$, then the operator $T_h$ is uniformly bounded between $W^{k,p}(\mathcal{M})$ and $W^{k,p}(\mathcal{M}_h)$ in the sense that
\begin{equation}
\begin{aligned}
           &\left\|T_h \omega\right\|_{W^{k,p}(\mathcal{M}_h)}\lesssim\|\omega\|_{W^{k,p}(\mathcal{M})}\lesssim\|T_h \omega\|_{W^{k,p}(\mathcal{M}_h)}, \quad \forall \omega\in\mathcal{V}(\mathcal{M}).
\end{aligned}
\label{equ:boundedness_of_T_h}
\end{equation}
\end{lemma}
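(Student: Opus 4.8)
The plan is to move the whole estimate to the common parametric domain $\Omega_h$, on which the two manifolds differ only through their metric tensors, and then to invoke Theorem~\ref{thm:metrictensor_approximation}. The key elementary observation is that $T_h$ is invisible at the level of parametric representatives: since $P_h=\pi\circ\pi_h^{-1}$, one has for every $\omega\in\mathcal{V}(\mathcal{M})$
\[
(T_h\omega)\circ\pi_h=(\omega\circ\pi\circ\pi_h^{-1})\circ\pi_h=\omega\circ\pi=:\tilde\omega .
\]
Thus $\omega$ on $\mathcal{M}$ and its image $T_h\omega$ on $\mathcal{M}_h$ are represented by one and the same function $\tilde\omega$ on $\Omega_h$; the operator $T_h$ changes only the geometric weights with which $\tilde\omega$ is measured, not $\tilde\omega$ itself.

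With this in hand I would express both Sobolev norms patchwise over $\Omega_h$ in intrinsic (coordinate-invariant) form, schematically
\[
\norm{\omega}_{W^{k,p}(\mathcal{M})}^{p}\;\simeq\;\sum_{j\in J}\sum_{m=0}^{k}\int_{\Omega_h^{j}}\bigl|\nabla_g^{m}\tilde\omega\bigr|_g^{\,p}\,\sqrt{|g|}\;dx ,
\]
together with the identical expression for $\norm{T_h\omega}_{W^{k,p}(\mathcal{M}_h)}^{p}$ in which $g$ is replaced throughout by $g_h$. Here $\nabla_g^{m}$ is the $m$-th covariant derivative and $|\cdot|_g$ the pointwise tensor norm induced by $g$. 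Because the differentiated object is the same $\tilde\omega$ in both lines, the claimed two-sided bound reduces to a purely pointwise comparison of the $g$- and $g_h$-weights: the volume density $\sqrt{|g|}$ versus $\sqrt{|g_h|}$, the index contractions through $g^{-1}$ versus $g_h^{-1}$, and, as soon as $k\ge2$, the Christoffel symbols of $g$ versus those of $g_h$.

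For $k\in\{0,1\}$ this comparison is immediate. The estimate $\norm{g^{-1}(g-g_h)}_{L^\infty}\lesssim h^2$ gives spectral equivalence of the form $c\,\xi^{\top}g\,\xi\le\xi^{\top}g_h\,\xi\le C\,\xi^{\top}g\,\xi$ with $c,C=1+\mathcal{O}(h^2)$ (so that $g_h^{-1}$ and $g^{-1}$ are likewise comparable), while the volume estimate of Theorem~\ref{thm:metrictensor_approximation} gives $(1-Ch^2)\sqrt{|g|}\le\sqrt{|g_h|}\le(1+Ch^2)\sqrt{|g|}$. Inserting these into the integrals yields both inequalities with constants tending to $1$ as $h\to0$, uniformly in $j\in J$, which is stronger than the uniform boundedness asserted. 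For general $k\ge2$ one additionally needs the $L^\infty$ comparability of the derivatives $\partial^{s}g$ and $\partial^{s}g_h$ for $s\le k-1$, so that the covariant-derivative coefficients (Christoffel symbols and their derivatives) match to leading order; this is exactly the higher-order part of the metric-approximation theory of \cite{dong2025dg}.

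I expect the main obstacle to be precisely this higher-order bookkeeping, rather than the first-order case. Two points demand care. First, $\mathcal{M}_h$ is only piecewise smooth, so the change of variables and the chain rule must be carried out elementwise, and one must check that the hypothesis that ``the smoothness of $\mathcal{M}_h$ is compatible with the regularity of $W^{k,p}$'' is exactly what keeps the per-element estimates from accumulating into an $h$-dependent constant across interfaces. Second, when differentiating $\sqrt{|g_h|}$ and $g_h^{-1}$ one must verify that every resulting term is already controlled by the metric-derivative estimates above, so that no spurious negative power of $h$ enters. Once these are secured, summing the elementwise bounds over $j\in J$ delivers the stated norm equivalence \eqref{equ:boundedness_of_T_h}.
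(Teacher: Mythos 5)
Your proposal is correct and takes essentially the same route as the paper's framework: the paper itself does not prove this lemma (it imports it from \cite{dziuk1988sfem,dong2020pppr}), but the metric-based argument of \cite{dong2020pppr} --- and the whole setup of Section~\ref{sec:pre} --- is precisely your reduction via the identity $(T_h\omega)\circ\pi_h=\omega\circ\pi$ to a pointwise comparison of the weights $g$ and $g_h$ on $\Omega_h$, settled elementwise by Theorem~\ref{thm:metrictensor_approximation}, in contrast to the classical proof of \cite{dziuk1988sfem} which works in the ambient space with tangential projections and the Weingarten map. The one refinement worth recording is that for $k\ge 2$ the norm equivalence does not actually require the derivatives of $g$ and $g_h$ to ``match to leading order'' (although in this parametrization they happen to, both being $O(h)$ elementwise, since $\Omega_h$ consists of the nearly tangent faces themselves): because $g_h$ is elementwise constant, it suffices that the Christoffel symbols of $g$ and their derivatives up to order $k-2$ are uniformly bounded --- guaranteed by the $C^3$-smoothness and compactness of $\mathcal{M}$ for the range $k\le 3$ used in the paper --- so that covariant and flat coordinate derivatives of $\tilde\omega$ differ by lower-order terms with bounded coefficients, which the full $W^{k,p}$ norm absorbs on both sides; smallness of these coefficient differences is needed only for supercloseness-type estimates, never for the two-sided bound claimed here.
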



Using the Riemannian metric, the tangential gradient can be defined as
	\begin{equation}\label{equ:surfacegradient}
		\nabla_g \omega=g^{ij}\partial_i\omega\partial_j,
	\end{equation}
and the Laplace–Beltrami operator is given by
	\begin{equation}\label{equ:laplacebeltrami}
\Delta_g \omega = \frac{1}{\sqrt{|g|}} \partial_i \left( \sqrt{|g|} g^{ij} \partial_j \omega \right),
\end{equation}
where $\partial_j$ denotes the tangential basis,  $g^{ij}$ is the $(i,j)$-entry of the inverse Riemannian metric tensor $g$, and  $|g| := |\det(g_{ij})|$ is the absolute value of the determinant of the metric tensor.

Under the parametrization map $\pi$, the tangential gradient $\nabla_g$ can be realized as
\begin{equation}\label{equ:surfacegradient_realization}
	(\nabla_g \omega)\circ \pi = \partial \pi (g^{-1}\circ \pi) \nabla \bar{\omega},
\end{equation}
where $\bar{\omega} = \omega \circ \pi$ is the pullback of the function $u$ to the local planar parametric domain $\Omega_h$.
Similarly, we define the discrete tangential operator $\nabla_{g_h}$. We emphasize that all differential operators are defined piecewise. 

The Riemann metric tensor provides a different way of computing the $L^2$ product of gradients on $\mathcal{M}$ and $\mathcal{M}_h$. For any $\omega,\psi\in H^1(\mathcal{M})$, and $\omega_h, \psi_h\in H^1(\mathcal{M}_h)$, the following properties hold \cite{dong2025dg}:
\begin{equation}
         \begin{aligned}
             &\int_{\mathcal{M}}\nabla_{g}\omega\cdot\nabla_{g}\psi\mathrm{d}\sigma_{g}=\sum_{j\in J}\int_{\Omega_h^j}(\nabla\bar{\omega})^{\top}g^{-1}\nabla\bar{\psi}\sqrt{|g|}\mathrm{d}\sigma,\\
             &\int_{\mathcal{M}_h}\nabla_{g_h}\omega_h\cdot\nabla_{g_h}\psi_h\mathrm{d}\sigma_{g_h}=\sum_{j\in J}\int_{\Omega_h^j}(\nabla\bar{\omega}_h)^{\top}g_h^{-1}\nabla\bar{\psi}_h\sqrt{|g_h|}\mathrm{d}\sigma,
         \end{aligned}
         \label{eq:Theorem:Integral_of_gradients_transform}
     \end{equation}
where $\mathrm{d}\sigma,\mathrm{d}\sigma_{g}$, and $\mathrm{d}\sigma_{g_h}$ represent measures on the parametric domain, the exact surface and the approximate surface, respectively, and $\mathrm{d}\sigma_g=\sqrt{|g|}\,\mathrm{d}\sigma$, $\mathrm{d}\sigma_{g_h}=\sqrt{|g_h|}\,\mathrm{d}\sigma$.


\section{Numerical methods }\label{sec:met}
In this section, we consider the numerical approximation of the Laplace–Beltrami type equation \eqref{equ:model}. The first part is devoted to the construction of a fully discrete scheme based on a hybrid approach combining finite difference and finite element methods. In the subsequent subsection, we introduce gradient recovery techniques to  produce a superconvergent post-processed gradient.

\subsection{FDM-sFEM Scheme}\label{ssec:sch}
Motivated by applications in optimal transport \cite{benamou2000cfdot,dong2025ot}, the model problem \eqref{equ:model} incorporates a Neumann type boundary condition in time. The solution to \eqref{equ:model} is determined only up to an additive constant. To guarantee well-posedness, we impose the following normalization condition:
\begin{equation}\label{equ:sol_constrain}
\int_{\mathcal{T}}\int_{\mathcal{M}} u \, \mathrm{d}\sigma_g\, \mathrm{d}t = 0.
\end{equation}

In addition, the data must satisfy the following compatibility condition:
\begin{equation}
\int_{\mathcal{T}}\int_{\mathcal{M}} f(x,t)\,  \mathrm{d}\sigma_g \, \mathrm{d}t = \int_{\mathcal{M}} \left( \mu_0(x) - \mu_1(x) \right) \, \mathrm{d}\sigma_g.
\end{equation}

We begin by discretizing the temporal domain in \eqref{equ:model} using the central finite difference method. To this end, we partition the time interval $\mathcal{T}$ into $N$ uniform subintervals:
\begin{equation}	\label{equ:time_intervals}
	0 = t_0 < t_1 < t_2 < \cdots < t_{N-1} < t_{N} = 1,
\end{equation}
with step size $\tau = \frac{1}{N}$ and $t_i = i\tau $ for $i=0,\ldots,N$.  We define the first-order difference operator $D_t$ as
\begin{equation}
    D_{t}u^i(x)=\frac{u^i(x)-u^{i-1}(x)}{\tau},\quad i=0,\ldots,N,
    \label{equ:first_difference}
\end{equation}
and the second-order difference operator $D_{tt}$ is defined as
 \begin{equation} \label{equ:second_difference}
    D_{tt}u^i=\frac{1}{\tau}(D_{t}u^{i+1}-D_{t}u^{i}),\quad i=0,\ldots, N.
\end{equation}

To incorporate the Neumann type boundary condition in time, we adopt a ghost penalty inspired modification for the second-order finite difference operator $D_{tt}$. Specifically, we define
\begin{equation}\label{equ:difference_operator}
    D_{tt}u^i(x)=\left\{\begin{aligned}
        &\frac{2u^1(x)-2u^0(x)}{\tau^2}, &&i=0,\\
        &\frac{u^{i+1}(x)-2u^{i}(x)+u^{i-1}(x)}{\tau^2}, &&1\leq i\leq N-1,\\
        &\frac{2u^{N-1}(x)-2u^{N}(x)}{\tau^2}, &&i=N.
    \end{aligned}\right.
\end{equation}

Additionally, we  define the auxiliary  right hand side function  as
\begin{equation} \label{equ:boundary_correction}
	b^i = 
	\left \{
	\begin{array}{ll}
		\frac{2}{\tau}\mu_0,   & i =  0, \\[4pt]
		0, & 1 \le i \le N-1, \\[4pt]
		-\frac{2}{\tau}\mu_1, & i = N.
	\end{array}
	\right.
\end{equation}

The semi-discrete formulation of the model equation \eqref{equ:model} reads as: find $u^i \in H^1(\mathcal{M})$ such that
\begin{equation}\label{equ:semi_discretization}
	- D_{tt} u^i - \Delta_g u^i = f(t_i) + b^i,
\end{equation}
with the corresponding weak form given by
\begin{equation}\label{equ:semi_discretization_weak}
    - (D_{tt} u^i, v)_{\mathcal{M}} + (\nabla_g u^i, \nabla_g v)_{\mathcal{M}} = (f(t_i) + b^i, v)_{\mathcal{M}}, \quad \forall v \in H^1(\mathcal{M}),
\end{equation}
for $i = 0, \ldots, N$. In \eqref{equ:semi_discretization_weak}, $(\cdot,\cdot)_{\mathcal{M}}$ denotes the $L^2$-inner product on $\mathcal{M}$.

We define the following piecewise linear basis functions in the temporal direction:
\begin{equation}
	\phi^i(t) = \begin{cases}
		\displaystyle \frac{t - t_{i-1}}{\tau}, & t \in [t_{i-1}, t_i], \\
		\displaystyle \frac{t_{i+1} - t}{\tau}, & t \in [t_i, t_{i+1}], \\
		0, & \text{otherwise},
	\end{cases}
	\label{temporal_linear_basis_function}
\end{equation}
for $i=1,\ldots, N-1$, with the boundary terms $\phi_0(t)=(t_1-t)/\tau$ for $t\in[t_{0},t_1]$ and $\phi_0(t)=0$ for $t\notin [t_{0},t_1]$, $\phi_N(t)=(t-t_{N-1})/\tau$ for $t\in[t_{N-1},t_N]$ and $\phi_N(t)=0$ for $t\notin [t_{N-1},t_N]$.

Then, the semi-discrete FDM solution can be written as
\begin{equation}\label{equ:semi-discrete_solution}
	u_N(t, x) = \sum_{i = 0}^{N} u^i(x) \phi^i(t).
\end{equation}

To obtain a fully discrete approximation, we adopt a hybrid approach by considering a continuous piecewise linear surface finite element space on the discrete surface $\mathcal{M}_h$, defined as
\begin{equation}\label{equ:fem_space}
    S_h = \left\{ v_h  \in H^1(\mathcal{M}_h)\ \middle|\ \bar{v}_h=v_h\circ \pi_h \in \mathbb{P}^1(\Omega_h^j)\ \text{for all } j \in J \right\},
\end{equation}
where $\mathbb{P}^1(\Omega_h^j)$ denotes the space of piecewise linear polynomials on every parametric patch $\Omega_h^j$. The corresponding lifted space on $\mathcal{M}$ is denoted by
\begin{equation}\label{equ:fem_space_exact_surface}
    \tilde{S}_h = \left\{ v \in H^1(\mathcal{M})  \ \middle|\ v = T_h^{-1} v_h \text{ and } v_h  \in S_h\ \right\}.
\end{equation}

The FDM-sFEM discretization of the model equation \eqref{equ:model} then seeks $u_h^i \in S_h$ such that
\begin{equation}\label{equ:full_discrete}
	- (D_{tt} u_h^i, v_h)_{\mathcal{M}_h} + (\nabla_{g_h} u_h^i, \nabla_{g_h} v_h)_{\mathcal{M}_h} = (T_h \left(f(t_i) + b^i\right), v_h)_{\mathcal{M}_h}, \quad \forall v_h \in S_h,
\end{equation}
for $i = 0, \ldots, N$.

The fully discrete FDM-sFEM solution is represented as
\begin{equation}\label{equ:full_discrete_solution}
	u_h(t, x) = \sum_{i = 0}^{N} u_h^i(x) \phi^i(t).
\end{equation}

\begin{remark}
	To solve the FDM-sFEM scheme in \eqref{equ:full_discrete}, one must address a large-scale linear system. A fast solver based on the reduction dimension using the eigenvalue decomposition in the time direction has been developed in \cite{dong2025ot}.
\end{remark}

\subsection{Gradient recovery techniques}\label{ssec:grt}
In this subsection, we investigate the superconvergent post-processing of the FDM-sFEM solutions. Specifically, we employ the polynomial preserving recovery (PPR) technique to enhance the accuracy of the temporal derivative, while the parametric polynomial preserving recovery (PPPR) method is utilized for recovering the surface gradient.

\subsubsection{Polynomial preserving recovery}\label{sssec:ppr}
Let $S_{\tau}=\operatorname{span}\{\phi_i(t)\}$ denote the space of continuous piecewise linear functions defined on a uniform partition of the time interval  ${\mathcal{T}}$. The  PPR gradient recovery operator  $G_{\tau}: S_{\tau}\rightarrow S_{\tau}$,  following \cite{zhang2005ppr, guo2025recovery}, is constructed in three steps.

First, for each node $t_i$, we define the corresponding local patch: 
 \begin{equation}
      I_{t_i}=\left\{\begin{aligned}
          &(t_0,t_2), &&i=0,\\
          &(t_i,t_{i+1}), && 1\leq i\leq N-1,\\
          &(t_{N-2},t_{N}), &&i=N.
      \end{aligned}\right.
  \end{equation}
  
Next, over each patch $I_{t_i}$, we construct a quadratic polynomial $p_{t_i}$ approximating $\omega_{\tau}\in S_{\tau}$ by solving the local least-squares problem:
  \begin{equation}
      p_{t_i}=\arg\min\limits_{p\in\mathbb{P}^2(I_{t_i})}\sum_{t_k\in I_{t_i}\cap\mathcal{N}_{\tau}}\left|p(t_k)-\omega_{\tau}(t_k)\right|^2,
  \end{equation}
  where $\mathcal{N}_{\tau}=\{t_0,t_1,\ldots,t_{N}\}$ denotes the set of nodal points. The recovered gradient at $t_i$ is then defined by evaluating the derivative of the fitted polynomial:
    \begin{equation}
      (G_{\tau} \omega_{\tau})(t_i)=\left.\frac{\mathrm{d}p_{t_i}(t)}{\mathrm{d}t}\right|_{t=t_i}.
  \end{equation}
  
Finally, the global recovered function $G_{\tau} \omega_{\tau} \in S_{\tau}$ is constructed via nodal interpolation:
  \begin{equation}
     G_{\tau}\omega_{\tau} =\sum_{i=0}^{N}(G_{\tau}\omega_{\tau})(t_i)\phi_i(t).
  \end{equation}

The operator $G_{\tau}$ satisfies the following consistency result:
\begin{lemma}	\label{lem:PPR_consistency}
        Let $G_{\tau}$ be the PPR gradient recovery operator.  For any $\omega\in H^{3}({\mathcal{T}})$,  it holds that
    \begin{equation}
    		\left\|G_{\tau} I_{\tau} \omega-\partial_t \omega\right\|_{L^2(T)}\lesssim \tau^{2}\left\|\omega\right\|_{H^{3}(T)},
                \label{eq:lemma:G_tau_closeness}
    \end{equation}
    where $I_{\tau}$ is the Lagrange interpolation operator on $\tau$. 
\end{lemma}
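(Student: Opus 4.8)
The plan is to prove the consistency estimate \eqref{eq:lemma:G_tau_closeness} by reducing it to a local, elementwise analysis and then summing up. Since $G_\tau$ is defined node-by-node through local quadratic least-squares fits and then interpolated globally, I would first establish a pointwise bound at each node $t_i$ of the form $\bigl|(G_\tau I_\tau \omega)(t_i) - \partial_t\omega(t_i)\bigr| \lesssim \tau^{2}\,|\omega|_{H^{3}(I_{t_i})}/\sqrt{\tau}$, where $I_{t_i}$ is the associated patch. The key structural fact to exploit is \emph{polynomial preservation}: if $\omega \in \mathbb{P}^2$ then $I_\tau\omega$ agrees with $\omega$ at the nodes, the local least-squares fit $p_{t_i}$ reproduces $\omega$ exactly (the fitting problem has an exact solution in $\mathbb{P}^2$), and hence $G_\tau I_\tau\omega = \partial_t\omega$ identically. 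This makes $G_\tau I_\tau$ a quasi-interpolation operator that is exact on quadratics, which is precisely the hypothesis needed for a Bramble–Hilbert argument.

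The main steps I would carry out are as follows. First, fix a node $t_i$ and pass to a reference patch of unit length via the affine scaling $t = t_i + \tau s$; this normalizes the mesh size and lets me treat the least-squares fit and differentiation as fixed linear operators on the reference configuration. Second, on the reference patch I would write the recovered derivative $(G_\tau I_\tau\omega)(t_i)$ as an explicit linear combination $\sum_k c_k\, \omega(t_k)$ of nodal values (the $c_k$ come from solving the $3\times 3$ normal equations of the quadratic fit and differentiating), and observe this linear functional annihilates the error $\omega - \partial_t\omega$ for all $\omega\in\mathbb{P}^2$ by the preservation property above. Third, I would invoke the Bramble–Hilbert lemma on the reference patch: any bounded linear functional vanishing on $\mathbb{P}^2$ is controlled by the $H^3$ seminorm, giving the reference-scale bound, which I then rescale back using the transformation of the $H^3$ seminorm under $t\mapsto t_i+\tau s$ to recover the factor $\tau^2$ (carefully tracking the extra $\tau^{-1/2}$ from the measure when converting pointwise nodal bounds into an $L^2(\mathcal{T})$ norm). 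Finally, I would bound the global $L^2$ error
\begin{equation}
\left\|G_\tau I_\tau\omega - \partial_t\omega\right\|_{L^2(\mathcal{T})}^2 = \sum_{j}\int_{t_{j-1}}^{t_j}\left|G_\tau I_\tau\omega - \partial_t\omega\right|^2\,\mathrm{d}t\nonumber
\end{equation}
by splitting $G_\tau I_\tau\omega - \partial_t\omega = (G_\tau I_\tau\omega - I_\tau\partial_t\omega) + (I_\tau\partial_t\omega - \partial_t\omega)$, controlling the second piece by a standard interpolation estimate in $S_\tau$ and the first piece by the nodal bounds above combined with the equivalence of norms for the piecewise-linear interpolant of nodal data, then summing over elements and using finite overlap of the patches $I_{t_i}$ to collapse the patchwise seminorms into the global $\|\omega\|_{H^3(\mathcal{T})}$.

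The step I expect to be the main obstacle is the boundary treatment at the two endpoint nodes $t_0$ and $t_N$. Because the patches there are one-sided and off-center, namely $I_{t_0}=(t_0,t_2)$ and $I_{t_N}=(t_{N-2},t_N)$, the recovered value is the derivative of the quadratic fit evaluated at an \emph{endpoint} of its patch rather than at an interior point, so the linear functional $\sum_k c_k\,\omega(t_k)$ has different coefficients and I must verify separately that it still annihilates $\mathbb{P}^2$ and remains uniformly bounded on the reference patch. This requires solving the asymmetric reference least-squares problem explicitly and confirming that the Bramble–Hilbert constant stays $h$-independent; once boundedness and quadratic exactness are checked at the endpoints, the same rescaling argument applies and the overall bound follows. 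A secondary technical care point is that the $c_k$ for the interior nodes must be shown uniformly bounded independent of $i$, which follows from the uniformity of the mesh (all interior patches are affine images of a single reference patch), so there is in fact only one interior normal-equation solve to perform.
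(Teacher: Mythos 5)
Your proposal is correct, but there is nothing in the paper to compare it against line by line: the paper states this lemma \emph{without proof}, treating it as a known consistency property of PPR and deferring to the cited literature (\cite{zhang2005ppr, guo2025recovery}). What you have written is essentially a self-contained reconstruction of the standard argument behind those references: quadratic preservation of the local least-squares fit (with three nodes per patch the fit is interpolation, so it reproduces any $\omega\in\mathbb{P}^2$, and since $\partial_t\omega\in\mathbb{P}^1$ the global nodal interpolant of the recovered values equals $\partial_t\omega$ exactly); a Bramble--Hilbert argument on the scaled reference patch, which is legitimate in one dimension because $H^3\hookrightarrow C^2$ makes nodal values and the nodal derivative bounded functionals; the scaling $|\hat\omega|_{H^3(\hat I)}=\tau^{5/2}|\omega|_{H^3(I_{t_i})}$ combined with the factor $\tau^{-1}$ from differentiation, yielding your nodal bound $\tau^{3/2}|\omega|_{H^3(I_{t_i})}$; conversion of nodal bounds to $L^2$ via the extra $\tau^{1/2}$ from the measure; the splitting through $I_\tau\partial_t\omega$; and finite overlap of the patches. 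Your identification of the endpoint nodes as the delicate case is also the right concern, and on the uniform mesh it resolves explicitly: the one-sided fit at $t_0$ gives the functional $\bigl(-3\omega(t_0)+4\omega(t_1)-\omega(t_2)\bigr)/(2\tau)$, which is exact on $\mathbb{P}^2$ and uniformly bounded after rescaling. Two minor points. First, the phrase \textquotedblleft annihilates the error $\omega-\partial_t\omega$\textquotedblright{} should say that the functional $\omega\mapsto (G_\tau I_\tau\omega)(t_i)-\partial_t\omega(t_i)$ vanishes on $\mathbb{P}^2$. Second, your argument implicitly uses three-node interior patches $(t_{i-1},t_{i+1})$; the paper's displayed definition $(t_i,t_{i+1})$ contains only two nodes, under which the quadratic fit would be underdetermined, so this is evidently a typo in the paper and your reading is the correct one.
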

  
In addition, $G_{\tau}$ is bounded in the following sense:
    \begin{lemma}        \label{lem:G_tau_boundedness}
        Let $G_{\tau}$ be the PPR gradient recovery operator.  For any $\omega\in H^{1}({\mathcal{T}})$,  it holds that
        \begin{equation}
            \left\|G_{\tau}\omega\right\|_{L^2(T)}\lesssim\left\|\partial_t \omega\right\|_{L^2(T)}.
        \end{equation}
    \end{lemma}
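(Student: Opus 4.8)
The plan is to reduce the $L^2$ estimate to a discrete $\ell^2$ bound on the nodal values of $G_\tau\omega$, and then to exhibit each nodal value as a bounded linear combination of consecutive first differences of $\omega$, which are themselves integrals of $\partial_t\omega$. Note first that in one dimension $H^1(\mathcal{T})\hookrightarrow C(\overline{\mathcal{T}})$, so the point values $\omega(t_k)$ are well defined and $G_\tau$ extends to $H^1(\mathcal{T})$ through the same three-step construction, which uses only nodal data. Since $G_\tau\omega=\sum_{i=0}^N(G_\tau\omega)(t_i)\phi_i\in S_\tau$ is continuous piecewise linear on the uniform mesh of size $\tau$, I would invoke the standard mass-matrix norm equivalence,
\[
\|G_\tau\omega\|_{L^2(\mathcal{T})}^2\lesssim\tau\sum_{i=0}^N|(G_\tau\omega)(t_i)|^2,
\]
with a constant independent of $\tau$ and $N$, so that it suffices to prove the discrete estimate $\tau\sum_{i=0}^N|(G_\tau\omega)(t_i)|^2\lesssim\|\partial_t\omega\|_{L^2(\mathcal{T})}^2$.

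Next I would make the stencils explicit. Because each patch $I_{t_i}$ carries exactly three nodes, the local least-squares quadratic $p_{t_i}$ is in fact the quadratic interpolant at those nodes, and differentiating at $t_i$ gives the second-order central difference $(G_\tau\omega)(t_i)=(\omega(t_{i+1})-\omega(t_{i-1}))/(2\tau)$ for $1\le i\le N-1$, together with the one-sided formulas $(G_\tau\omega)(t_0)=(-3\omega(t_0)+4\omega(t_1)-\omega(t_2))/(2\tau)$ and $(G_\tau\omega)(t_N)=(3\omega(t_N)-4\omega(t_{N-1})+\omega(t_{N-2}))/(2\tau)$. In each stencil the coefficients sum to zero, so I can rewrite every nodal value purely in terms of first differences $\omega(t_{k+1})-\omega(t_k)=\int_{t_k}^{t_{k+1}}\partial_t\omega\,\mathrm{d}s$; for instance the left boundary stencil equals $\big(3(\omega(t_1)-\omega(t_0))-(\omega(t_2)-\omega(t_1))\big)/(2\tau)$.

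Then I would close the estimate with Cauchy–Schwarz applied to each such integral, e.g. $|(G_\tau\omega)(t_i)|^2\le(2\tau)^{-1}\int_{t_{i-1}}^{t_{i+1}}|\partial_t\omega|^2\,\mathrm{d}s$ at interior nodes, with the analogous bound and an absolute constant at the two endpoints. Multiplying by $\tau$ cancels the inverse power, and since each subinterval lies in at most two stencil supports the summation has bounded overlap, yielding
\[
\tau\sum_{i=0}^N|(G_\tau\omega)(t_i)|^2\lesssim\|\partial_t\omega\|_{L^2(\mathcal{T})}^2;
\]
combined with the first step this gives the claim.

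The argument is elementary, and I expect the delicate points to be bookkeeping rather than conceptual: verifying that each patch indeed contains three nodes so the least-squares fit reduces to interpolation with the stated stencils (in particular the one-sided boundary stencils consistent with the patches $(t_0,t_2)$ and $(t_{N-2},t_N)$), and controlling the overlap of supports near the endpoints. The conceptual crux—that the bound involves only the seminorm $\|\partial_t\omega\|_{L^2(\mathcal{T})}$ rather than the full $H^1$ norm—is handled automatically by the vanishing of the stencil coefficient sums, that is, the exactness of $G_\tau$ on constants; this is precisely what lets the $\tau^{-1}$ scaling of the difference quotient be absorbed by the $\tau$ from the piecewise-linear $L^2$ norm, leaving a scale-invariant constant.
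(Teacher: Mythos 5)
Your proposal is correct. Note, however, that the paper itself gives no proof of this lemma: it is quoted as a known property of the one-dimensional PPR operator, deferring to the PPR literature (\cite{zhang2005ppr, guo2025recovery}), so there is no in-paper argument to compare against; your write-up supplies exactly the argument that is standard in that literature. The three ingredients you use are the right ones and fit together without gaps: (i) the mass-matrix equivalence $\|v\|_{L^2(\mathcal{T})}^2 \simeq \tau\sum_i |v(t_i)|^2$ for $v\in S_\tau$ on a uniform mesh; (ii) the observation that a least-squares quadratic fit at exactly three nodes is interpolation, so the recovered values are the central difference $(\omega(t_{i+1})-\omega(t_{i-1}))/(2\tau)$ at interior nodes and the one-sided three-point formulas at $t_0,t_N$; and (iii) exactness on constants (stencil coefficients summing to zero), which lets every nodal value be written in terms of first differences, i.e.\ integrals of $\partial_t\omega$, so that Cauchy--Schwarz plus bounded overlap of the stencil supports yields the seminorm bound with a $\tau$-independent constant. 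Two small points of bookkeeping you handled correctly and are worth making explicit: the paper's displayed patch definition $I_{t_i}=(t_i,t_{i+1})$ for $1\le i\le N-1$ must be read as $(t_{i-1},t_{i+1})$ (as written it contains only two nodes and the quadratic fit would be underdetermined), which is the interpretation your stencils presuppose; and since $G_\tau$ is constructed from nodal data only, applying it to $\omega\in H^1(\mathcal{T})$ implicitly means $G_\tau I_\tau\omega$, which is legitimate precisely because of the 1D embedding $H^1(\mathcal{T})\hookrightarrow C(\overline{\mathcal{T}})$ that you invoke.
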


\subsubsection{Parametric polynomial preserving recovery}\label{sssec:pppr}
To recover the surface gradient, \cite{dong2020pppr} proposes an intrinsic approach based on the geometric realization of the tangential gradient. Combining equations \eqref{equ:metric} and \eqref{equ:surfacegradient_realization}, we obtain
\begin{equation}
	(\nabla_g \omega) \circ \pi = \left( (\partial \pi)^{\dagger} \right)^{\top} \nabla \bar{\omega},
\end{equation}
where $(\partial \pi)^{\dagger}$ denotes the Moore--Penrose pseudo inverse of the Jacobian matrix $\partial \pi$. This relation indicates that to construct a superconvergent recovered tangential gradient, it suffices to obtain superconvergent approximations of both $\partial \pi$ and $\nabla \bar{\omega}$ on the parametric domain $\Omega_h$.

Let $\bar{G}_h$ denote the PPR gradient recovery operator defined on $\Omega_h$. Then, the parametric polynomial preserving recovery (PPPR) operator $G_h: S_h \rightarrow S_h$, as introduced in \cite{dong2020pppr}, is given by
\begin{equation}
	(G_h \omega_h) \circ \pi_h = \left(\left( \bar{G}_h \pi_h \right)^{\dagger}\right)^{\top} \bar{G}_h \bar{\omega},
\end{equation}
where $\bar{\omega}$ denotes the lifted function associated with $v_h$, and $\pi_h$ is the parametrization map.

Similar to the temporal PPR operator $G_{\tau}$, the operator $G_h$ satisfies a second-order consistency estimate \cite{dong2020pppr}:

\begin{lemma}\label{lem:pppr_consistency}
Suppose $\omega \in W^{3}_{\infty}(\mathcal{M})$. Then the following estimate holds:
\begin{equation}\label{equ:pppr_consistency}
	\left\| \nabla_g \omega- T_h^{-1} G_h \left( I_h T_h \omega \right) \right\|_{L^2(\mathcal{M})}
	\leq h^2 \sqrt{\mathcal{A}(\mathcal{M})} D(g, g^{-1}) \left\| \omega \right\|_{W^{3}_{\infty}(\mathcal{M})},
\end{equation}
where $I_h$ denotes the spatial linear interpolation operator on $\mathcal{M}_h$, $D(g, g^{-1})$ is a constant depending on the metric tensor $g$ and its inverse, and $\mathcal{A}(\mathcal{M})$ denotes the surface area of $\mathcal{M}$.
\end{lemma}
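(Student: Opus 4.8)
The plan is to transport the entire estimate to the common parametric domain $\Omega_h$, where both the exact tangential gradient and its PPPR counterpart are realized through pseudo-inverses of Jacobians, and then to exploit the classical planar second-order consistency of the underlying PPR operator $\bar G_h$ applied \emph{twice}: once to the scalar field $\bar\omega = \omega\circ\pi$, and once to the vector-valued coordinate map $\pi_h$.

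First I would pull everything back. Writing $\bar v_h$ for the pullback of $I_h T_h\omega$ under $\pi_h$, which is precisely the piecewise-linear interpolant $\bar I_h\bar\omega$ of $\bar\omega$ on $\Omega_h$, the realization \eqref{equ:surfacegradient_realization} together with the definition of $G_h$ and the identity $(T_h^{-1}w)\circ\pi = w\circ\pi_h$ gives, on each patch,
\[
\big(\nabla_g\omega - T_h^{-1}G_h(I_hT_h\omega)\big)\circ\pi
 = \big((\partial\pi)^\dagger\big)^\top\nabla\bar\omega
 - \big((\bar G_h\pi_h)^\dagger\big)^\top \bar G_h\bar v_h .
\]
Using $\mathrm{d}\sigma_g=\sqrt{|g|}\,\mathrm{d}\sigma$, the left-hand $L^2(\mathcal{M})$ norm becomes the $\sqrt{|g|}$-weighted $L^2(\Omega_h)$ norm of the right-hand difference, so it suffices to bound that integrand uniformly.

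Next I would split the integrand additively as
\[
\big((\partial\pi)^\dagger\big)^\top\big(\nabla\bar\omega-\bar G_h\bar v_h\big)
+\Big(\big((\partial\pi)^\dagger\big)^\top-\big((\bar G_h\pi_h)^\dagger\big)^\top\Big)\bar G_h\bar v_h,
\]
and estimate the two terms separately. The first term is controlled by the uniform boundedness of $(\partial\pi)^\dagger$, which follows from the nondegeneracy of $g=(\partial\pi)^\top\partial\pi$, times the planar PPR consistency error $\|\nabla\bar\omega-\bar G_h\bar v_h\|_{L^\infty(\Omega_h)}\lesssim h^2\|\bar\omega\|_{W^3_\infty}$; this is exactly the two-dimensional gradient-recovery estimate of PPR valid under the $\mathcal{O}(h^{2\sigma})$ irregular condition, and is the direct analogue of Lemma~\ref{lem:PPR_consistency}. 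For the second term I would first establish a geometric consistency bound $\|\partial\pi-\bar G_h\pi_h\|_{L^\infty(\Omega_h)}\lesssim h^2$ by applying PPR componentwise to the coordinate functions of $\pi$ and absorbing the vertex discrepancies $|x_i-x_{i,h}|\lesssim h^2$ from \eqref{equ:mesh_assumption}; then I would pass to the pseudo-inverses via a first-order perturbation bound for the Moore--Penrose inverse, which is Lipschitz near matrices of full column rank, obtaining $\|((\partial\pi)^\dagger)^\top-((\bar G_h\pi_h)^\dagger)^\top\|_{L^\infty}\lesssim h^2$; finally $\bar G_h\bar v_h$ is bounded by $\|\nabla\bar\omega\|$ in the spirit of Lemma~\ref{lem:G_tau_boundedness}.

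Assembling the two uniform $\mathcal{O}(h^2)$ pointwise bounds and integrating the constant weight over $\Omega_h$ produces the area factor $\sqrt{\mathcal{A}(\mathcal{M})}$, while the metric-dependent quantities (norms of $g$, $g^{-1}$ and the lower bound on the singular values of $\partial\pi$ entering the pseudo-inverse perturbation) collect into the single constant $D(g,g^{-1})$. The main obstacle is the second term: unlike the scalar estimate, $\pi_h$ is \emph{not} the exact nodal interpolant of $\pi$, since the vertices $x_{i,h}$ differ from $x_i$, so second-order geometric consistency must be recovered by carefully combining the PPR estimate for $\pi$ with the two scales $h^2$ and $h^3$ in \eqref{equ:mesh_assumption}, and this $\mathcal{O}(h^2)$ accuracy must then survive the nonlinear pseudo-inverse map --- which is precisely where the full-rank (metric nondegeneracy) hypothesis is essential.
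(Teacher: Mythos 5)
Your attempt cannot be compared against a proof in the paper, because the paper does not prove this lemma: it is imported verbatim from \cite{dong2020pppr}. Measured against that source, your skeleton --- pull everything back to $\Omega_h$ via \eqref{equ:surfacegradient_realization}, split the error into a function-recovery part $((\partial\pi)^\dagger)^\top(\nabla\bar\omega-\bar G_h\bar v_h)$ and a geometry-recovery part $\bigl(((\partial\pi)^\dagger)^\top-((\bar G_h\pi_h)^\dagger)^\top\bigr)\bar G_h\bar v_h$, use planar PPR consistency twice, and finish with a Moore--Penrose perturbation bound --- is essentially the strategy of the cited proof, and the function-recovery half is sound (two minor quibbles: planar PPR consistency requires only shape regularity, not the $\mathcal{O}(h^{2\sigma})$ irregular condition, which is a supercloseness hypothesis; and you silently use $\|\bar\omega\|_{W^3_\infty(\Omega_h)}\lesssim\|\omega\|_{W^3_\infty(\mathcal{M})}$, which needs adequate smoothness of $\pi$).

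The genuine gap is in the geometric step, precisely where you flag ``the main obstacle'' and then assert it can be resolved by combining PPR for $\pi$ with the two scales in \eqref{equ:mesh_assumption}. It cannot. When the vertices are off the surface, the nodal discrepancy between $\pi_h$ and $I_h\pi$ is $x_{i,h}-\pi(x_{i,h})=d(x_{i,h})\,\nu(\pi(x_{i,h}))$: purely normal, of size $O(h^2)$, with no control on its variation between neighboring vertices beyond $O(h^2)$. Since $\bar G_h$ acts like a difference quotient (nodal coefficients of size $1/h$ summing to zero), an $O(h^2)$ nodal perturbation generically produces an $O(h)$ perturbation of the recovered Jacobian, with leading part $\nu a^\top$, $|a|=O(h)$; the second scale $|P_{\Omega_h}x_i-P_{\Omega_h}x_{i,h}|\lesssim h^3$ constrains only the in-plane discrepancy and says nothing about the neighbor-to-neighbor variation of the normal offsets $d(x_{i,h})$, which is what would be needed. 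This tilt does not cancel in the final product: writing $A=\partial\pi$ and $B=A+\nu a^\top+O(h^2)$, and using $A^\top\nu=0$, one computes $B^\top B=A^\top A+O(h^2)$ and hence $(B^\dagger)^\top=(A^\dagger)^\top+\nu a^\top(A^\top A)^{-1}+O(h^2)$, so the recovered gradient carries an error $\nu\,\bigl[a^\top(A^\top A)^{-1}\nabla\bar\omega\bigr]$ of size $O(h)\,|\nabla_g\omega|$ in the normal direction --- first order, not second. (Meshes with alternating $\pm h^2$ normal offsets satisfy \eqref{equ:mesh_assumption} verbatim and realize this worst case.) So your intermediate claim $\|\partial\pi-\bar G_h\pi_h\|_{L^\infty(\Omega_h)}\lesssim h^2$ is false under \eqref{equ:mesh_assumption} alone. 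The repair is either to work in the setting of \cite{dong2020pppr}, where the vertices interpolate $\mathcal{M}$, so $\pi_h=I_h\pi$ exactly, this term vanishes, and PPR consistency applied to the smooth map $\pi$ plus your pseudo-inverse perturbation finishes the proof; or to add an assumption that the normal deviations vary smoothly across the mesh (adjacent offsets differing by $O(h^3)$), which restores the $O(h^2)$ bound by the same difference-quotient computation.
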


In addition, the PPPR operator $G_h$ satisfies the following boundedness property \cite{dong2020pppr}:
\begin{lemma}\label{lem:pppr_bounded}
For any $\omega_h \in \mathcal{V}(\mathcal{M}_h)$, the operator $G_h$ is bounded in the sense that
\begin{equation}\label{equ:pppr_bounded}
	\left\| G_h \omega_h \right\|_{L^2(\mathcal{M}_h)} \lesssim \left\| \nabla_{g_h} \omega_h \right\|_{L^2(\mathcal{M}_h)},
\end{equation}
where the hidden constant depends on the geometry of $\mathcal{M}$ and the shape regularity of the triangulation $\mathcal{M}_h$, but is independent of the mesh size $h$.
\end{lemma}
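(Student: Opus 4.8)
The plan is to pull every object back to the common parametric domain $\Omega_h$, where the recovery operator is actually defined, bound the two factors appearing in the realization of $G_h$ separately, and then transfer the resulting planar estimate back to the discrete tangential gradient. Starting from the measure identity $\mathrm{d}\sigma_{g_h}=\sqrt{|g_h|}\,\mathrm{d}\sigma$ and the explicit formula $(G_h\omega_h)\circ\pi_h=\left((\bar{G}_h\pi_h)^{\dagger}\right)^{\top}\bar{G}_h\bar{\omega}$ with $\bar{\omega}=\omega_h\circ\pi_h$, I would first write
\[
\norm{G_h\omega_h}_{L^2(\mathcal{M}_h)}^2=\int_{\Omega_h}\left|(G_h\omega_h)\circ\pi_h\right|^2\sqrt{|g_h|}\,\mathrm{d}\sigma=\int_{\Omega_h}\left|\left((\bar{G}_h\pi_h)^{\dagger}\right)^{\top}\bar{G}_h\bar{\omega}\right|^2\sqrt{|g_h|}\,\mathrm{d}\sigma.
\]
The integrand is the product of a matrix factor $\left((\bar{G}_h\pi_h)^{\dagger}\right)^{\top}$ acting on the recovered planar gradient $\bar{G}_h\bar{\omega}$, so the strategy is to bound the matrix factor uniformly and then control $\bar{G}_h\bar{\omega}$ by $\nabla\bar{\omega}$.

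The central step, which I expect to be the main obstacle, is to show that the operator norm of $\left((\bar{G}_h\pi_h)^{\dagger}\right)^{\top}$ is bounded uniformly in $h$; equivalently, that the smallest singular value of the recovered Jacobian $\bar{G}_h\pi_h$ stays bounded away from zero. For the exact parametrization, $(\partial\pi)^{\top}\partial\pi=g$ is smooth and uniformly positive definite on the compact $\mathcal{M}$, so the singular values of $\partial\pi$ lie in a fixed interval away from $0$ and $\infty$, and the recovered factor is a convergent approximation of $\left((\partial\pi)^{\dagger}\right)^{\top}=\partial\pi\,g^{-1}$. Since $\bar{G}_h$ reproduces affine maps and $\pi_h$ is piecewise affine, $\bar{G}_h\pi_h$ approximates $\partial\pi$ (this is precisely the mechanism behind the PPPR consistency, cf. Lemma~\ref{lem:pppr_consistency}); invoking Theorem~\ref{thm:metrictensor_approximation} to keep $g_h$ uniformly close to $g$, the recovered metric $(\bar{G}_h\pi_h)^{\top}\bar{G}_h\pi_h$ is invertible with a uniform lower spectral bound for $h$ small. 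This yields $\norm{\left((\bar{G}_h\pi_h)^{\dagger}\right)^{\top}\xi}\lesssim|\xi|$ for every $2$-vector $\xi$, and together with the uniform upper bound $\sqrt{|g_h|}\lesssim 1$ reduces the estimate to $\norm{G_h\omega_h}_{L^2(\mathcal{M}_h)}^2\lesssim\norm{\bar{G}_h\bar{\omega}}_{L^2(\Omega_h)}^2$.

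It then remains to invoke the boundedness of the planar PPR operator, $\norm{\bar{G}_h\bar{\omega}}_{L^2(\Omega_h)}\lesssim\norm{\nabla\bar{\omega}}_{L^2(\Omega_h)}$, the two-dimensional analogue of Lemma~\ref{lem:G_tau_boundedness}, which holds because the nodal least-squares fit is a bounded linear operation with constants depending only on shape regularity, and to transfer the planar gradient back to the manifold. For the latter I would use the second identity in \eqref{eq:Theorem:Integral_of_gradients_transform},
\[
\norm{\nabla_{g_h}\omega_h}_{L^2(\mathcal{M}_h)}^2=\int_{\Omega_h}(\nabla\bar{\omega})^{\top}g_h^{-1}\nabla\bar{\omega}\,\sqrt{|g_h|}\,\mathrm{d}\sigma,
\]
combined with the uniform lower bounds $\lambda_{\min}(g_h^{-1})\gtrsim 1$ and $\sqrt{|g_h|}\gtrsim 1$ coming from the spectral control of $g_h$, to conclude $\norm{\nabla\bar{\omega}}_{L^2(\Omega_h)}\lesssim\norm{\nabla_{g_h}\omega_h}_{L^2(\mathcal{M}_h)}$. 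Chaining the three estimates gives the claimed bound with a hidden constant depending only on the geometry of $\mathcal{M}$ (through the spectral bounds on $g$) and the shape regularity of $\mathcal{M}_h$, but not on $h$. The only genuinely delicate point is the uniform invertibility of the recovered Jacobian; the remaining steps are changes of variables and standard spectral bounds on the metric tensor.
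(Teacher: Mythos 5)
The paper does not prove this lemma at all: it is quoted as a known result with a citation to \cite{dong2020pppr}, so there is no internal proof to compare against. Your blind argument is correct and is essentially the argument given in that reference: (i) pull everything back to $\Omega_h$ via $\mathrm{d}\sigma_{g_h}=\sqrt{|g_h|}\,\mathrm{d}\sigma$; (ii) obtain a uniform bound on $\bigl\|\bigl((\bar{G}_h\pi_h)^{\dagger}\bigr)^{\top}\bigr\|$ from a uniform lower bound on the smallest singular value of the recovered Jacobian, which follows from PPR consistency applied to the smooth parametrization (using that $g=(\partial\pi)^{\top}\partial\pi$ is uniformly positive definite on the compact $\mathcal{M}$, that the vertices are $\mathcal{O}(h^2)$-close so the recovered Jacobian is an $\mathcal{O}(h)$ perturbation of $\partial\pi$, and that the local least-squares fits have full rank by shape regularity, all for $h$ small enough); (iii) invoke planar PPR boundedness and transfer $\|\nabla\bar{\omega}\|_{L^2(\Omega_h)}\lesssim\|\nabla_{g_h}\omega_h\|_{L^2(\mathcal{M}_h)}$ through the spectral bounds on $g_h$ supplied by Theorem~\ref{thm:metrictensor_approximation}. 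You also correctly identify the uniform invertibility of the recovered Jacobian as the only delicate step; the remaining pieces are standard. One presentational caveat: $\bar{G}_h$ in the PPPR construction acts patch-by-patch on locally chosen parametric planes rather than as a single global operator on $\Omega_h$, so the chained $L^2$ estimates should formally be assembled from the nodal/patch-wise bounds (using norm equivalence for piecewise linear functions), but this does not affect the validity of the argument.
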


\section{Superconvergent analysis}\label{sec:sup}
In this section, we establish superconvergence results for the gradient recovery operator applied to the FDM-sFEM solution. In subsection~\ref{ssec:tmp}, we derive a supercloseness estimate between the semi-discrete solution and its piecewise linear interpolant in the temporal direction. Subsection~\ref{ssec:spa} focuses on the spatial supercloseness. These results are then combined in  subsection~\ref{ssec:srg} to prove the superconvergence of the recovered gradient.

\subsection{Temporal supercloseness}\label{ssec:tmp}
For any $\omega \in H^1(\mathcal{T}; L^2(\mathcal{M}))$, we define the temporal interpolation operator $I_{\tau}$ by
\begin{equation}\label{equ:temporal_interpolation}
	I_{\tau} \omega(t) = \sum_{i=0}^{N} \omega(t_i) \, \phi_i(t),
\end{equation}
where $\{ \phi_i(t) \}$ are the piecewise linear basis functions in time as defined in \eqref{temporal_linear_basis_function}. 

The interpolation operator $I_{\tau}$ satisfies the following approximation properties; see \cite{he2018fdmfem} for details.
\begin{lemma}\label{lem:closeness_temporal_interpolation}
Let $\omega \in H^1(\mathcal{T}; L^2(\mathcal{M}))$. Then the following estimates hold:
\begin{equation}
	\begin{aligned}
		\| \omega - I_{\tau} \omega \|_{L^2(\mathcal{T}; L^2(\mathcal{M}))} &\leq \tau \, \| \partial_t (\omega - I_{\tau} \omega) \|_{L^2(\mathcal{T}; L^2(\mathcal{M}))}, \\
		\| \partial_t I_{\tau} \omega \|_{L^2(\mathcal{T}; L^2(\mathcal{M}))} &\leq \| \partial_t \omega \|_{L^2(\mathcal{T}; L^2(\mathcal{M}))}.
	\end{aligned}
\end{equation}

Moreover, if $\omega \in H^2(\mathcal{T}; L^2(\mathcal{M}))$, then
\begin{equation}
	\| \partial_t (\omega - I_{\tau} \omega) \|_{L^2(\mathcal{T}; L^2(\mathcal{M}))} \leq \tau \, \| \partial_{tt} \omega \|_{L^2(\mathcal{T}; L^2(\mathcal{M}))}.
\end{equation}
\end{lemma}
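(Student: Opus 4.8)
The plan is to reduce all three Bochner-valued estimates to elementary one-dimensional interpolation inequalities, proved at a.e.\ fixed $x \in \mathcal{M}$ and then integrated over $\mathcal{M}$ by Tonelli's theorem. Since $H^1(\mathcal{T}) \hookrightarrow C(\overline{\mathcal{T}})$, for $\omega \in H^1(\mathcal{T};L^2(\mathcal{M}))$ the nodal values $\omega(t_i)$ are well-defined elements of $L^2(\mathcal{M})$, so $I_\tau \omega$ is meaningful, and on each subinterval $[t_{i-1},t_i]$ the interpolant is the affine function matching $\omega$ at the two endpoints. Each bound then follows by establishing it on a single subinterval and summing over $i$.

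For the first estimate, I would use that $e := \omega - I_\tau \omega$ vanishes at both endpoints of each $[t_{i-1},t_i]$. Writing $e(t) = \int_{t_{i-1}}^t \partial_s e(s)\,ds$ and applying Cauchy--Schwarz gives $|e(t)|^2 \le (t-t_{i-1})\int_{t_{i-1}}^{t_i}|\partial_s e|^2\,ds$; integrating in $t$ over the subinterval produces the factor $\tau^2/2 \le \tau^2$, and summing over $i$ and integrating over $\mathcal{M}$ yields $\|\omega - I_\tau \omega\|_{L^2(\mathcal{T};L^2(\mathcal{M}))} \le \tau\,\|\partial_t(\omega - I_\tau \omega)\|_{L^2(\mathcal{T};L^2(\mathcal{M}))}$.

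For the stability estimate, the key observation is that on each $[t_{i-1},t_i]$ the derivative $\partial_t I_\tau \omega$ is the constant slope $\tau^{-1}(\omega(t_i)-\omega(t_{i-1})) = \tau^{-1}\int_{t_{i-1}}^{t_i}\partial_t\omega\,dt$, i.e.\ the average of $\partial_t\omega$. By Cauchy--Schwarz (Jensen), $|\mathrm{slope}|^2 \le \tau^{-1}\int_{t_{i-1}}^{t_i}|\partial_t\omega|^2\,dt$; multiplying by the interval length $\tau$ and summing gives the bound with constant one. For the third estimate, I would combine this identity with the fact that the slope is exactly the mean of $\partial_t\omega$ on the subinterval, so that $\partial_t(\omega - I_\tau \omega) = \partial_t\omega - \overline{\partial_t\omega}$ has zero mean there; the Poincar\'e--Wirtinger inequality on an interval of length $\tau$ then controls its $L^2$ norm by a multiple of $\tau\,\|\partial_{tt}\omega\|_{L^2}$ on the same subinterval (the sharp constant $\tau/\pi$ is more than enough). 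Summing over $i$ and integrating in $x$ completes the argument.

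The proof involves no genuine obstacle: each piece is a classical finite-element interpolation or Poincar\'e-type estimate. The only point requiring a little care is the reduction to the scalar setting---justifying that the pointwise-in-$x$ inequalities may be integrated over $\mathcal{M}$ via Tonelli and that nodal evaluation is admissible in the Bochner space---after which all three bounds are immediate.
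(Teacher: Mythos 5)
Your proof is correct and complete. There is, however, nothing in the paper to compare it against: the authors do not prove this lemma at all, but simply defer to \cite{he2018fdmfem} (``see \cite{he2018fdmfem} for details''), so your argument supplies the missing details rather than diverging from an in-paper proof. The three ingredients you use are exactly the classical ones: the interpolation error vanishes at the nodes, so Cauchy--Schwarz on $e(t)=\int_{t_{i-1}}^{t}\partial_s e\,ds$ gives the first bound (in fact with the sharper constant $\tau/\sqrt{2}$); on each subinterval $\partial_t I_\tau\omega$ equals the average of $\partial_t\omega$, i.e.\ the $L^2$-orthogonal projection of $\partial_t\omega$ onto piecewise constants, which gives the stability bound with constant $1$; and Poincar\'e--Wirtinger applied to the zero-mean function $\partial_t\omega-\overline{\partial_t\omega}$ gives the second-order bound (with constant $\tau/\pi$). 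One stylistic simplification worth noting: the reduction to scalar inequalities at a.e.\ $x$ via Tonelli is legitimate (for Hilbert-space-valued functions one has the identification of $H^k(\mathcal{T};L^2(\mathcal{M}))$ with $L^2(\mathcal{M};H^k(\mathcal{T}))$), but you can avoid this technical point entirely by running the identical estimates at the Bochner level: bound $\|e(t)\|_{L^2(\mathcal{M})}\le\int_{t_{i-1}}^{t}\|\partial_s e(s)\|_{L^2(\mathcal{M})}\,ds$ and apply scalar Cauchy--Schwarz, and apply Poincar\'e--Wirtinger coefficientwise in an orthonormal basis of $L^2(\mathcal{M})$. Either way, all three estimates hold with constants at least as good as those claimed in the lemma.
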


One of the key ingredients for establishing supercloseness in time is the use of summation by parts (also referred to as the discrete Green’s formula), which serves as a discrete analogue of integration by parts. For sequences $\{\alpha^i\},\ \{\beta^i\}\subset L^2(\mathcal{M})$, the classical form of summation by parts can be written as
\begin{equation}    \label{equ:classical_summation_by_parts}
       \sum_{i=1}^{N-1}(\alpha^{i+1}-\alpha^i,\beta^i)_{\mathcal{M}}+(\alpha^1,\beta^0)_{\mathcal{M}}-(\alpha^{N},\beta^{N})_{\mathcal{M}}=-\sum_{i=1}^{N}(\alpha^i,\beta^{i}-\beta^{i-1})_{\mathcal{M}}.
\end{equation}

To incorporate with the ghost penalty method for the Neumann boundary condition, we require the following summation by parts formula:
\begin{lemma} Let $D_{tt}$ denote the finite difference operator defined in \eqref{equ:difference_operator}. For sequences $\{\alpha^i\},\ \{\beta^i\}\subset L^2(\mathcal{M})$, the following identity holds:
\begin{equation}\label{equ:summation_by_parts}
	\sum_{i=0}^{N}w_i(D_{tt}\alpha^i,\beta^i)_{\mathcal{M}}=-\sum_{i=1}^{N}\left(D_t \alpha^i,D_{t}\beta^i\right)_{\mathcal{M}},
\end{equation}
where the weights are defined by $w_0 = w_N = 1/2$ and $w_i = 1$ for $i = 1, \ldots, N-1$.
\end{lemma}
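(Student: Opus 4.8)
The plan is to reduce the weighted identity to the classical summation-by-parts formula \eqref{equ:classical_summation_by_parts} by rewriting the modified second difference $D_{tt}$ entirely in terms of the first differences $D_t\alpha^i$. The first step is the algebraic observation that, for the interior indices $1 \le i \le N-1$, the operator in \eqref{equ:difference_operator} factors as $D_{tt}\alpha^i = \tau^{-1}(D_t\alpha^{i+1} - D_t\alpha^i)$, which follows immediately from the definition \eqref{equ:first_difference}. For the two boundary indices, the ghost-penalty modification produces $D_{tt}\alpha^0 = 2\tau^{-1}D_t\alpha^1$ and $D_{tt}\alpha^N = -2\tau^{-1}D_t\alpha^N$; here the factor $2$ is precisely the feature that the half-weights $w_0 = w_N = 1/2$ are designed to absorb.

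Next I would assemble the weighted sum on the left-hand side. Using the three identities above together with $w_0 = w_N = \tfrac12$ and $w_i = 1$ for interior $i$, the boundary contributions collapse to $w_0(D_{tt}\alpha^0,\beta^0)_{\mathcal{M}} = \tau^{-1}(D_t\alpha^1,\beta^0)_{\mathcal{M}}$ and $w_N(D_{tt}\alpha^N,\beta^N)_{\mathcal{M}} = -\tau^{-1}(D_t\alpha^N,\beta^N)_{\mathcal{M}}$, so that
\[
\sum_{i=0}^{N} w_i (D_{tt}\alpha^i,\beta^i)_{\mathcal{M}} = \frac{1}{\tau}\left[ (D_t\alpha^1,\beta^0)_{\mathcal{M}} + \sum_{i=1}^{N-1}(D_t\alpha^{i+1}-D_t\alpha^i,\beta^i)_{\mathcal{M}} - (D_t\alpha^N,\beta^N)_{\mathcal{M}} \right].
\]
The bracketed expression is exactly the left-hand side of the classical formula \eqref{equ:classical_summation_by_parts} with the sequence $\{\alpha^i\}$ there replaced by $\{D_t\alpha^i\}_{i=1}^N$.

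Applying \eqref{equ:classical_summation_by_parts} to $\{D_t\alpha^i\}$ then converts this bracket into $-\sum_{i=1}^{N}(D_t\alpha^i,\beta^i-\beta^{i-1})_{\mathcal{M}}$, and invoking $\beta^i-\beta^{i-1} = \tau D_t\beta^i$ from \eqref{equ:first_difference} cancels the prefactor $\tau^{-1}$ and yields the claimed right-hand side $-\sum_{i=1}^N (D_t\alpha^i, D_t\beta^i)_{\mathcal{M}}$. The only delicate point is the bookkeeping at the two temporal endpoints: one must verify that the factor-of-two ghost-penalty terms combine with the half-weights to reproduce precisely the boundary terms $(D_t\alpha^1,\beta^0)_{\mathcal{M}}$ and $-(D_t\alpha^N,\beta^N)_{\mathcal{M}}$ demanded by \eqref{equ:classical_summation_by_parts}. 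Once this matching is confirmed, the remainder is a direct substitution, and no summation needs to be re-derived from scratch, since \eqref{equ:classical_summation_by_parts} may be taken as given.
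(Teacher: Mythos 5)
Your proof is correct. The three identities it rests on check out against \eqref{equ:difference_operator} and \eqref{equ:first_difference}: for interior indices $D_{tt}\alpha^i=\tau^{-1}\bigl(D_t\alpha^{i+1}-D_t\alpha^i\bigr)$, while at the endpoints $w_0D_{tt}\alpha^0=\tau^{-1}D_t\alpha^1$ and $w_ND_{tt}\alpha^N=-\tau^{-1}D_t\alpha^N$, so the half-weights exactly absorb the ghost-penalty factors of $2$; the bracketed expression is then precisely the left-hand side of \eqref{equ:classical_summation_by_parts} applied to the sequence $\{D_t\alpha^i\}_{i=1}^N$ (legitimate, since that identity only involves entries with indices $1,\dots,N$), and the final substitution $\beta^i-\beta^{i-1}=\tau D_t\beta^i$ closes the argument. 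Your route is, however, genuinely different from the paper's: the paper proves \eqref{equ:summation_by_parts} by a self-contained direct computation---writing out the boundary terms, splitting the interior sum $\sum_{i=1}^{N-1}$ into two sums, shifting indices, and recombining the leftover endpoint terms---in effect re-deriving the telescoping argument inline without ever invoking \eqref{equ:classical_summation_by_parts}. You instead make the structural observation that, after weighting, the modified operator is exactly the first difference (in $i$) of the sequence $\{D_t\alpha^i\}$ including at the two endpoints, which lets you quote \eqref{equ:classical_summation_by_parts} as a black box. Your version is shorter, more modular, and makes transparent \emph{why} the weights $1/2$ are the right choice; the trade-off is that \eqref{equ:classical_summation_by_parts} is stated in the paper without proof, so your argument outsources the index-shift computation to that (standard Abel-summation) identity, whereas the paper's proof is fully self-contained.
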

\begin{proof}
	By the definition of $w_i$ and the finite difference operator, we have 
	\begin{equation*}
    \begin{aligned}
        &\sum_{i=0}^{N}w_i(D_{tt}\alpha^i,\beta^i)_{\mathcal{M}}={\frac{1}{2}}\left(D_{tt}\alpha^0,\beta^0\right)_{\mathcal{M}}+{\frac{1}{2}}\left(D_{tt}\alpha^{N},\beta^{N}\right)+\sum_{i=1}^{N-1}(D_{tt}\alpha^i,\beta^i)_{\mathcal{M}}\\
        =&\left(\frac{\alpha^1-\alpha^0}{\tau^2},\beta^0\right)_{\mathcal{M}}+\left(\frac{\alpha^{N-1}-\alpha^{N}}{\tau^2},\beta^{N}\right)_{\mathcal{M}}+\sum_{i=1}^{N-1}\frac{1}{\tau}\left[\left(\frac{\alpha^{i+1}-\alpha^{i}}{\tau},\beta^i\right)_{\mathcal{M}}-\left(\frac{\alpha^{i}-\alpha^{i-1}}{\tau},\beta^i\right)_{\mathcal{M}}\right]\\
        =&\left(\frac{\alpha^1-\alpha^0}{\tau^2},\beta^0\right)_{\mathcal{M}}+\left(\frac{\alpha^{N-1}-\alpha^{N}}{\tau^2},\beta^{N}\right)_{\mathcal{M}}+\sum_{i=2}^{N}\frac{1}{\tau}\left(\frac{\alpha^{i}-\alpha^{i-1}}{\tau},\beta^{i-1}\right)_{\mathcal{M}}-\sum_{i=1}^{N-1}\frac{1}{\tau}\left(\frac{\alpha^{i}-\alpha^{i-1}}{\tau},\beta^i\right)_{\mathcal{M}}\\
        =&\left(\frac{\alpha^1-\alpha^0}{\tau^2},\beta^0\right)_{\mathcal{M}}+\left(\frac{\alpha^{N-1}-\alpha^{N}}{\tau^2},\beta^{N}\right)_{\mathcal{M}}-\sum_{i=2}^{N-1}\left(\frac{\alpha^{i}-\alpha^{i-1}}{\tau},\frac{\beta^i-\beta^{i-1}}{\tau}\right)_{\mathcal{M}}\\
        &+\frac{1}{\tau}\left(\frac{\alpha^{N}-\alpha^{N-1}}{\tau},\beta^{N-1}\right)_{\mathcal{M}}-\frac{1}{\tau}\left(\frac{\alpha^1-\alpha^0}{\tau},\beta^1\right)_{\mathcal{M}}\\
        =&-\left(\frac{\alpha^1-\alpha^0}{\tau},\frac{\beta^1-\beta^0}{\tau}\right)_{\mathcal{M}}-\left(\frac{\alpha^{N}-\alpha^{N-1}}{\tau},\frac{\beta^{N}-\beta^{N-1}}{\tau}\right)_{\mathcal{M}}-\sum_{i=2}^{N-1}\left(D_t \alpha^i,D_{t}\beta^i\right)_{\mathcal{M}}\\
        =&-\sum_{i=1}^{N}\left(D_t \alpha^i,D_{t}\beta^i\right)_{\mathcal{M}},
    \end{aligned}
\end{equation*}
which completes the proof.
\end{proof}

For the finite difference scheme defined in \eqref{equ:difference_operator}, we establish the following truncation error estimates:
\begin{lemma}\label{lem:truncation_error}
    For $u\in H^3(\mathcal{T};L^2(\mathcal{M}))$, let $D_{tt}$ be defined as \eqref{equ:difference_operator},  then we have
    \begin{equation}\label{equ:truncation_error}
		\partial_{tt}u(t_i)-D_{tt}u(t_i)=\left\{\begin{aligned}&-\frac{1}{\tau}G^1-\frac{2}{\tau}\mu_0, &&i=0;\\
		    &-\frac{1}{2\tau}\left(G^{i+1}-G^i\right)+\frac{1}{\tau}\int_{t_{i-1}}^{t_i}\left(t-\frac{t_{i-1}+t_i}{2}\right)\partial_{ttt}u(t)\,\mathrm{d}t, \quad &&1\leq i\leq N-1;\\
            &\frac{1}{\tau}G^{N}+\frac{2}{\tau}\int_{t_{N-1}}^{t_{N}}\left(t-\frac{t_{N-1}+t_{N}}{2}\right)\partial_{ttt}u(t)\,\mathrm{d}t+\frac{2}{\tau}\mu_1, &&i=N,
		\end{aligned}\right.
	\end{equation}
	where $G^i=\frac{1}{\tau}\int_{t_{i-1}}^{t_i}(t-t_i)^2\partial_{ttt}u(t)\,\mathrm{d}t$ for $i=1,\ldots, N$.
\end{lemma}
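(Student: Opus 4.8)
The plan is to prove each of the three cases by expanding $u$ in a Taylor series with integral remainder about the node $t_i$, carried out to second order (which is exactly the order supported by the $H^3(\mathcal{T};L^2(\mathcal{M}))$ regularity), and then to reorganize the remainder integrals into the quantities $G^i$ and the symmetric-weight integrals appearing in the statement. The basic identity I would use is
\begin{equation*}
u(t_j)=u(t_i)+(t_j-t_i)\partial_t u(t_i)+\tfrac{1}{2}(t_j-t_i)^2\partial_{tt}u(t_i)+\tfrac{1}{2}\int_{t_i}^{t_j}(t_j-s)^2\partial_{ttt}u(s)\,\mathrm{d}s,
\end{equation*}
applied pointwise in $x\in\mathcal{M}$, with the remainder understood as an $L^2(\mathcal{M})$-valued Bochner integral.

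For the interior nodes $1\le i\le N-1$, I would apply the expansion with $j=i+1$ and $j=i-1$, add the two, and divide by $\tau^2$. The first- and third-order terms cancel by symmetry, leaving $\partial_{tt}u(t_i)$ plus $\tfrac{1}{2\tau^2}$ times the integral remainder over $[t_i,t_{i+1}]$ with weight $(t_{i+1}-s)^2$ and minus $\tfrac{1}{2\tau^2}$ times the remainder over $[t_{i-1},t_i]$ with weight $(t_{i-1}-s)^2$. By the definition of $G^{i+1}$ the first contributes $-\tfrac{1}{2\tau}G^{i+1}$. The second term is where the bookkeeping matters: the Taylor remainder carries weight $(t_{i-1}-s)^2$, whereas $G^i$ is defined with weight $(s-t_i)^2$. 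I would bridge the two with the elementary identity
\begin{equation*}
(t_{i-1}-s)^2-(s-t_i)^2=2\tau\Bigl(s-\tfrac{t_{i-1}+t_i}{2}\Bigr),
\end{equation*}
which follows from a difference of squares together with $t_{i-1}-t_i=-\tau$. Substituting this identity turns the remaining remainder into $+\tfrac{1}{2\tau}G^i$ plus $\tfrac{1}{\tau}$ times the symmetric-weight integral, which is precisely the claimed interior formula.

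For the boundary nodes I would proceed analogously but with the two-point stencils. At $i=0$, expanding $u(t_1)$ about $t_0$ and dividing $2(u(t_1)-u(t_0))/\tau^2$ leaves a first-order term $\tfrac{2}{\tau}\partial_t u(t_0)$ in addition to $\partial_{tt}u(t_0)$ and a remainder. Invoking the Neumann data $\partial_t u(t_0)=\mu_0$ and recognizing the remainder as $\tfrac{1}{\tau}G^1$ (here the weight $(t_1-s)^2$ already coincides with the definition of $G^1$, so no difference-of-squares step is needed) yields the stated expression $-\tfrac{1}{\tau}G^1-\tfrac{2}{\tau}\mu_0$. At $i=N$ the same reasoning applies to $2(u(t_{N-1})-u(t_N))/\tau^2$, producing a first-order term $\tfrac{2}{\tau}\partial_t u(t_N)=\tfrac{2}{\tau}\mu_1$ and a remainder with weight $(t_{N-1}-s)^2$; this time I would again apply the identity above with $i=N$ to split the remainder into $\tfrac{1}{\tau}G^N$ and $\tfrac{2}{\tau}$ times the symmetric-weight integral, matching the final case.

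I do not expect a genuine conceptual obstacle here; the result is a careful Taylor-with-remainder computation. The only point demanding attention is consistent sign and index bookkeeping across the three cases, and in particular the systematic use of the difference-of-squares identity to reconcile the weight $(t_{i-1}-s)^2$ that Taylor's formula produces naturally with the weight $(s-t_i)^2$ fixed in the definition of $G^i$. Keeping the direction of integration straight in the backward expansions (where the upper limit is smaller than the lower limit) is the most error-prone bit and is what I would verify most carefully.
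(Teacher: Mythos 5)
Your proposal is correct and follows essentially the same route as the paper: your Taylor expansion with integral remainder is exactly what the paper generates by its repeated integration by parts at the endpoints (where the Neumann data $\mu_0$, $\mu_1$ enter identically), and your difference-of-squares identity is the same manipulation the paper performs at $i=N$ to convert the weight $(t-t_{N-1})^2$ into $(t-t_N)^2$ plus the symmetric-weight term. The only substantive difference is that you derive the interior-node formula directly from the sum of two expansions, whereas the paper does not prove that case and simply cites it from \cite{he2018fdmfem}; your argument there is correct and makes the lemma self-contained.
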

\begin{proof}
    For $i = 1, \ldots, N - 1$, the formula has already been established in \cite{he2018fdmfem}.  
    
    For $i = 0$, using integration by parts, we obtain
    \begin{equation*}
        \begin{aligned}
            \partial_{tt}u(t_0)-D_{tt}u(t_0)= 
            &\partial_{tt}u(t_0)-\frac{2u(t_1)-2u(t_0)}{\tau^2}
            =\partial_{tt}u(t_0)-\frac{2}{\tau^2}\int_{t_0}^{t_1}\partial_tu(t)\,\mathrm{d}t\\
            =&\partial_{tt}u(t_0)-\frac{2}{\tau^2}\left(\tau\partial_tu(t_0)-\int_{t_0}^{t_1}(t-t_1)\partial_{tt}u(t)\,\mathrm{d}t\right)\\
            =&\partial_{tt}u(t_0)+\frac{2}{\tau^2}\int_{t_0}^{t_1}(t-t_1)\partial_{tt}u(t)\,\mathrm{d}t-\frac{2}{\tau}\mu_0\\
            =&\partial_{tt}u(t_0)+\frac{2}{\tau^2}\left(-\frac{\tau^2}{2}\partial_{tt}u(t_0)-\frac{1}{2}\int_{t_0}^{t_1}(t-t_1)^2\partial_{ttt}u(t)\,\mathrm{d}t\right)-\frac{2}{\tau}\mu_0\\
            =&-\frac{1}{\tau^2}\int_{t_0}^{t_1}(t-t_1)^2\partial_{ttt}u(t)\,\mathrm{d}t-\frac{2}{\tau}\mu_0=-\frac{1}{\tau}G^1-\frac{2}{\tau}\mu_0.
        \end{aligned}
    \end{equation*}
    
Using the same argument, we can deduce the case when $i=N$:
   \begin{equation*}
       \begin{aligned}
           &\partial_{tt}u(t_{N})-D_{tt}u(t_{N})=\partial_{tt}u(t_{N})-\frac{2u(t_{N-1})-2u(t_{N})}{\tau^2}\\
           =&\frac{1}{\tau^2}\int_{t_{N-1}}^{t_{N}}(t-t_{N-1})^2\partial_{ttt}u(t)\mathrm{d}t+\frac{2}{\tau}\mu_1\\
           =&\frac{1}{\tau^2}\int_{t_{N-1}}^{t_{N}}(t-t_{N})^2\partial_{ttt}u(t)\mathrm{d}t+\frac{1}{\tau^2}\int_{t_{N-1}}^{t_{N}}\left(t-t_{N-1}+t-t_{N}\right)\left(t-t_{N-1}-t+t_{N}\right)\mathrm{d}t+\frac{2}{\tau}\mu_1\\
           =&\frac{1}{\tau}G^{N}+\frac{2}{\tau}\int_{t_{N-1}}^{t_{N}}\left(t-\frac{t_{N-1}+t_{N}}{2}\right)\partial_{ttt}u(t)\mathrm{d}t+\frac{2}{\tau}\mu_1,
       \end{aligned}
   \end{equation*}
  which completes our proof.   
\end{proof}

Now, we are in the position to present the main results of  supercloseness results in temporal direction. 
\begin{lemma}\label{lem:temporal}
Let  $u\in H^4(\mathcal{T};L^2(\mathcal{M}))\cap H^2(\mathcal{T};H^2(\mathcal{M}))$ be the solution of \eqref{equ:model}, $u_N$ be the semi-discrete FDM solution defined in \eqref{equ:semi-discrete_solution}, and $I_{\tau}$ be the temporal interpolation operator defined in \eqref{equ:temporal_interpolation}. Then the following supercloseness estimate holds:
\begin{equation}
\begin{aligned}
&\| \nabla_g (I_{\tau} u - u_N) \|^2_{L^2(\mathcal{T}; L^2(\mathcal{M}))} + \| \partial_{t} (I_{\tau} u - u_N) \|^2_{L^2(\mathcal{T}; L^2(\mathcal{M}))} \\
\lesssim \,  &\tau^4 \big( 
 \| \nabla_g \partial_{tt} u \|^2_{L^2(\mathcal{T}; L^2(\mathcal{M}))}
+ \| \nabla_g \Delta_g^{-1} \partial_{tt} f \|^2_{L^2(\mathcal{T}; L^2(\mathcal{M}))} 
 + \| \partial_{ttt} u \|^2_{L^2(\mathcal{T}; L^2(\mathcal{M}))}
\big).
\end{aligned}
\end{equation}
\end{lemma}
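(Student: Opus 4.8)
The plan is to work nodally with the temporal error $e^i := u(t_i) - u^i$, whose piecewise-linear-in-time reconstruction is exactly $\eta := I_\tau u - u_N = \sum_i e^i\phi^i$. First I would subtract the semi-discrete scheme \eqref{equ:semi_discretization_weak} from the identity obtained by inserting the exact solution, using the truncation analysis of Lemma~\ref{lem:truncation_error}. The ghost-penalty data $b^i$ of \eqref{equ:boundary_correction} is designed precisely so that the nonhomogeneous Neumann values $\mu_0,\mu_1$ cancel against the boundary truncation terms, leaving the error equation $-(D_{tt}e^i,v)_{\mathcal M} + (\nabla_g e^i,\nabla_g v)_{\mathcal M} = (\rho^i,v)_{\mathcal M}$ for all $v\in H^1(\mathcal M)$, where $\rho^i$ collects only the higher-order residuals: the telescoping pieces $-\tfrac{1}{2\tau}(G^{i+1}-G^i)$ (and $-\tfrac1\tau G^1$, $\tfrac1\tau G^N$ at the endpoints) together with the midpoint-weighted remainders $r^i := \tfrac1\tau\int_{t_{i-1}}^{t_i}(t-\tfrac{t_{i-1}+t_i}{2})\partial_{ttt}u\,\mathrm dt$.

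The energy step is to test with $v=e^i$, multiply by $w_i\tau$, and sum over $i$, so that the new summation-by-parts formula \eqref{equ:summation_by_parts} turns the second-difference term into $\tau\sum_{i=1}^N\|D_t e^i\|^2_{L^2(\mathcal M)}=\|\partial_t\eta\|^2_{L^2(\mathcal T;L^2(\mathcal M))}$. This yields the clean identity $\|\partial_t\eta\|^2_{L^2(\mathcal T;L^2(\mathcal M))} + \tau\sum_i w_i\|\nabla_g e^i\|^2_{L^2(\mathcal M)} = \tau\sum_i w_i(\rho^i,e^i)_{\mathcal M}$, whose left-hand side controls both quantities to be estimated (the gradient sum being comparable to $\|\nabla_g\eta\|^2_{L^2(\mathcal T;L^2(\mathcal M))}$ for piecewise linear $\eta$). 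Everything then reduces to bounding the right-hand side. For the telescoping pieces I would apply the classical summation by parts \eqref{equ:classical_summation_by_parts} a second time to move the difference onto $e^i$, producing $\tfrac\tau2\sum_i(G^i,D_t e^i)_{\mathcal M}$; the boundary terms so generated cancel exactly against the endpoint contributions $w_0\tau(\rho^0,e^0)_{\mathcal M}$ and $w_N\tau(\tfrac1\tau G^N,e^N)_{\mathcal M}$, which is the second role played by the weights $w_0=w_N=\tfrac12$. Using $\|G^i\|^2_{L^2(\mathcal M)}\lesssim\tau^3\int_{t_{i-1}}^{t_i}\|\partial_{ttt}u\|^2_{L^2(\mathcal M)}\,\mathrm dt$, a weighted Cauchy--Schwarz bounds this by $\tau^2\|\partial_{ttt}u\|_{L^2(\mathcal T;L^2(\mathcal M))}\|\partial_t\eta\|_{L^2(\mathcal T;L^2(\mathcal M))}$, which Young's inequality absorbs into the $\|\partial_t\eta\|^2$ on the left at the price of $\tau^4\|\partial_{ttt}u\|^2$.

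The genuinely delicate part, and the main obstacle, is the remainder $r^i$ (and the surviving endpoint remainder $2r^N$). Paired with $e^i$ in $L^2(\mathcal M)$ it only carries the suboptimal order $\tau^2$, which would wreck the $\tau^4$ estimate. The resolution is to pair it through the $H^{-1}$--$H^1$ duality on the closed manifold: since $(r^i,e^i)_{\mathcal M} = -(\nabla_g\Delta_g^{-1}r^i,\nabla_g e^i)_{\mathcal M}$, the energy already controls $\nabla_g e^i$, while the antisymmetry (zero average) of the weight $t-\tfrac{t_{i-1}+t_i}{2}$ lets me subtract its midpoint value and gain one more power of $\tau$, giving $\|\nabla_g\Delta_g^{-1}r^i\|_{L^2(\mathcal M)}\lesssim\tau^{3/2}(\int_{t_{i-1}}^{t_i}\|\nabla_g\Delta_g^{-1}\partial_{tttt}u\|^2_{L^2(\mathcal M)}\,\mathrm dt)^{1/2}$. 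Weighted Cauchy--Schwarz and Young then contribute $\tau^4\|\nabla_g\Delta_g^{-1}\partial_{tttt}u\|^2_{L^2(\mathcal T;L^2(\mathcal M))}$.

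Finally, differentiating the equation \eqref{equ:model} twice in time gives $\partial_{tttt}u = -\partial_{tt}f - \Delta_g\partial_{tt}u$, whence $\nabla_g\Delta_g^{-1}\partial_{tttt}u = -\nabla_g\Delta_g^{-1}\partial_{tt}f - \nabla_g\partial_{tt}u$, converting this last contribution into precisely the two gradient terms that appear in the statement. Collecting the three contributions and absorbing the energy factors completes the estimate. The one remaining technical point is the treatment of the constant (kernel) mode of $\Delta_g$, so that $\Delta_g^{-1}$ is well defined on the mean-zero component; this is handled separately using the normalization \eqref{equ:sol_constrain}, with the constant mode contributing nothing to $\nabla_g e^i$.
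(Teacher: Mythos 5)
Your proposal follows essentially the same route as the paper's proof: the same error equation for $e^i=u(t_i)-u^i$ built from Lemma~\ref{lem:truncation_error} with the ghost-penalty cancellation of $\mu_0,\mu_1$, the same weighted energy testing combined with the new summation-by-parts formula \eqref{equ:summation_by_parts} for the $D_{tt}$ term and the classical formula \eqref{equ:classical_summation_by_parts} for the telescoping $G^i$ pieces (absorbed via $\|G^i\|_{L^2(\mathcal{M})}^2\lesssim\tau^3\int_{t_{i-1}}^{t_i}\|\partial_{ttt}u\|_{L^2(\mathcal{M})}^2\,\mathrm{d}t$ and Young's inequality), and the same use of the PDE to rewrite the fourth time derivative so that the midpoint remainder is ultimately controlled by $\|\nabla_g\partial_{tt}u\|$ and $\|\nabla_g\Delta_g^{-1}\partial_{tt}f\|$. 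The only difference is cosmetic: the paper gains the extra power of $\tau$ by integrating the remainder by parts in time first, producing the vanishing quadratic weight $(\tau/2)^2-\left(t-\tfrac{t_{i-1}+t_i}{2}\right)^2$, and then moving $\Delta_g$ onto $e^i$ by Green's formula, whereas you pair through $H^{-1}$ duality first and exploit the zero mean of the weight $t-\tfrac{t_{i-1}+t_i}{2}$ via midpoint subtraction --- two implementations of the same cancellation leading to the identical final bound.
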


\begin{proof}
	Let $u(t_i)$ denote the exact solution at $t_i$ and set $e^i = u(t_i) - u^i$.  Using the semi-discretization \eqref{equ:semi_discretization}, we can deduce that 
	\begin{equation}
				-D_{tt}e^i-\Delta_g e^i=\partial_{tt}u(t_i)-D_{tt}u(t_i) - b^{i},\quad \text{for }i=0,\ldots, N.
	\end{equation}
    
	First, we consider the interior points. For $i=1,\ldots, N-1$,  Lemma~\ref{lem:truncation_error}  implies that 
	\begin{equation}\label{equ:error_equation}
		-D_{tt}e^i-\Delta_g e^i=-\frac{1}{2\tau}\left(G^{i+1}-G^i\right)+\frac{1}{\tau}\int_{t_{i-1}}^{t_i}\left(t-\frac{t_{i-1}+t_i}{2}\right)\partial_{ttt}u(t)\,\mathrm{d}t.
	\end{equation}
    
	Using integration by parts, we obtain that 
		\begin{equation}		\label{equ:relations_between_E_and_G_rhs}
		\begin{aligned}
			&2\int_{t_{i-1}}^{t_i}\left(t-\frac{t_{i-1}+t_i}{2}\right)\partial_{ttt}u(t)\,\mathrm{d}t
			=\int_{t_{i-1}}^{t_i}\left(\left(\frac{\tau}{2}\right)^2-\left(t-\frac{t_{i-1}+t_i}{2}\right)^2\right)\partial_{tttt}u(t)\,\mathrm{d}t\\
			=&\int_{t_{i-1}}^{t_i}\left(\left(\frac{\tau}{2}\right)^2-\left(t-\frac{t_{i-1}+t_i}{2}\right)^2\right)\partial_{tt}\left(-\Delta_g u-f\right)\,\mathrm{d}t\\
			=&\int_{t_{i-1}}^{t_i}\left(\left(\frac{\tau}{2}\right)^2-\left(t-\frac{t_{i-1}+t_i}{2}\right)^2\right)\Delta_g \left(-\partial_{tt}u-\Delta_g^{-1}\partial_{tt}f\right)\,\mathrm{d}t.\\
		\end{aligned}
	\end{equation}
    
        Inserting  \eqref{equ:relations_between_E_and_G_rhs} into \eqref{equ:error_equation}, taking the inner product of the resulting expression with $2\tau e^i$ over $\mathcal{M}$, we obtain	
        \begin{equation}
		\begin{aligned}
			&-2\tau(D_{tt}e^i,e^i)_{\mathcal{M}}+2\tau\left\|\nabla_g e^i\right\|_{L^2(\mathcal{M})}^2+(G^{i+1}-G^i,e^i)_{\mathcal{M}}\\
			=&\left(\int_{t_{i-1}}^{t_i}\left(\left(\frac{\tau}{2}\right)^2-\left(t-\frac{t_{i-1}+t_i}{2}\right)^2\right)\Delta_g \left(-\partial_{tt}u-\Delta_g^{-1}\partial_{tt}f\right)\,\mathrm{d}t,e^i\right)_{\mathcal{M}}\\
			=&\left(\int_{t_{i-1}}^{t_i}\left(\left(\frac{\tau}{2}\right)^2-\left(t-\frac{t_{i-1}+t_i}{2}\right)^2\right) \left(\nabla_g\partial_{tt}u+\nabla_g\Delta_g^{-1}\partial_{tt}f\right)\,\mathrm{d}t,\nabla_g e^i\right)_{\mathcal{M}}\\
            \leq& \tau\left\|\nabla_g e^i\right\|_{L^2(\mathcal{M})}^2 +\frac{1}{\tau}\left\|\int_{t_{i-1}}^{t_i}\left(\left(\frac{\tau}{2}\right)^2-\left(t-\frac{t_{i-1}+t_i}{2}\right)^2\right) \left(\nabla_g\partial_{tt}u+\nabla_g\Delta_g^{-1}\partial_{tt}f\right)\,\mathrm{d}t\right\|_{L^2(\mathcal{M})}^2\\
            \leq& \tau\left\|\nabla_g e^i\right\|_{L^2(\mathcal{M})}^2 +\frac{1}{\tau}\int_{t_{i-1}}^{t_i}\left(\left(\frac{\tau}{2}\right)^2-\left(t-\frac{t_{i-1}+t_i}{2}\right)^2\right)^2\,\mathrm{d}t\int_{t_{i-1}}^{t_i}\left\|\nabla_g\partial_{tt}u+\nabla_g\Delta_g^{-1}\partial_{tt}f\right\|_{L^2(\mathcal{M})}^2\,\mathrm{d}t\\
            \leq&\tau\left\|\nabla_g e^i\right\|_{L^2(\mathcal{M})}^2 +\frac{1}{\tau}\int_{t_{i-1}}^{t_i}\left(\left(\frac{\tau}{2}\right)^2\right)^2\,\mathrm{d}t\int_{t_{i-1}}^{t_i}\left\|\nabla_g\partial_{tt}u+\nabla_g\Delta_g^{-1}\partial_{tt}f\right\|_{L^2(\mathcal{M})}^2\,\mathrm{d}t\\
                =&\tau\left\|\nabla_g e^i\right\|_{L^2(\mathcal{M})}^2+\left(\frac{\tau}{2}\right)^4\int_{t_{i-1}}^{t_i}\left\|\nabla_g\partial_{tt}u+\nabla_g\Delta_g^{-1}\partial_{tt}f\right\|_{L^2(\mathcal{M})}^2\,\mathrm{d}t\\
                \leq&\tau\left\|\nabla_g e^i\right\|_{L^2(\mathcal{M})}^2+\tau^4\int_{t_{i-1}}^{t_i}\left(\left\|\nabla_g\partial_{tt}u\right\|_{L^2(\mathcal{M})}^2+\left\|\nabla_g\Delta_g^{-1}\partial_{tt}f\right\|_{L^2(\mathcal{M})}^2\right)\,\mathrm{d}t,
		\end{aligned}
		\label{relations_between_E_and_G_3}
	\end{equation}
        where the first inequality follows from Young’s inequality, the second from H\"older’s inequality, and the final estimate uses the fact that
        \begin{equation}
            0\leq\left(\frac{\tau}{2}\right)^2-\left(t-\frac{t_{i-1}+t_i}{2}\right)^2\leq\left(\frac{\tau}{2}\right)^2,\quad \text{for all }t\in\left[t_{i-1},t_i\right].
            \nonumber
        \end{equation}

	Then, we consider boundary terms. For $i=0$, Lemma \ref{lem:truncation_error} and \eqref{equ:boundary_correction}  yield 
	 \begin{equation}
            -D_{tt}e^0-\Delta_g e^0=-\frac{1}{\tau}G^1.
            \label{proof:lem:temporal_boundary_term}
        \end{equation}  
        
    Taking the inner product of \eqref{proof:lem:temporal_boundary_term} with $\tau e^0$ over $\mathcal{M}$, we obtain
        \begin{equation}
            \begin{aligned}
                -\tau\left(D_{tt}e^0,e^0\right)_{\mathcal{M}} +\tau\left\|\nabla_g e^0\right\|_{L^2(\mathcal{M})}^2+\left(G^1,e^0\right)_{\mathcal{M}}=0.
            \end{aligned}
            \label{proof:lem:temporal_boundary_term_inner_product}
        \end{equation}

        Analogously, by applying Lemma \ref{lem:truncation_error} and \eqref{equ:boundary_correction}, the argument for the case $i = N$ can be written as follows:
        \begin{equation}
           \begin{aligned}
               &-\tau\left(D_{tt}e^{N},e^{N}\right)_{\mathcal{M}} +\tau\left\|\nabla_g e^{N}\right\|_{L^2(\mathcal{M})}^2-(G^{N},e^{N})_{\mathcal{M}}\\
               =&2\left(\int_{t_{N-1}}^{t_{N}}\left(t-\frac{t_{N-1}+t_{N}}{2}\right)\partial_{ttt}u(t)\,\mathrm{d}t,e^{N}\right)_{\mathcal{M}}\\
               \leq&\tau\left\|\nabla_g e^{N}\right\|_{L^2(\mathcal{M})}^2+\tau^4\int_{t_{{N}-1}}^{t_{N}}\left(\left\|\nabla_g\partial_{tt}u\right\|_{L^2(\mathcal{M})}^2+\left\|\nabla_g\Delta_g^{-1}\partial_{tt}f\right\|_{L^2(\mathcal{M})}^2\right)\mathrm{d}t.
           \end{aligned}
           \label{proof:lem:temporal_boundary_term_inner_product_2}
       \end{equation}
       
       Summing \eqref{relations_between_E_and_G_3} over $i = 1$ to $i = N - 1$, adding the boundary terms \eqref{proof:lem:temporal_boundary_term_inner_product} and \eqref{proof:lem:temporal_boundary_term_inner_product_2}, and applying the summation by parts identities \eqref{equ:classical_summation_by_parts} and \eqref{equ:summation_by_parts},  we obtain
       \begin{equation}
		\begin{aligned}
			&2\sum_{i=1}^{N} \tau\left\|D_t e^i\right\|_{L^2(\mathcal{M})}^2 +\sum_{i=0}^{N}\tau \left\|\nabla_g e^i\right\|_{L^2(\mathcal{M})}^2-\sum_{i=1}^{N}\tau \left(G^i,D_t e^i\right)_{L^2(\mathcal{M})}\\
			\leq &\tau^4\left( \left\|\nabla_g \partial_{tt}u\right\|_{L^2(\mathcal{T}; L^2(\mathcal{M}))}^2+\left\|\nabla_g\Delta_g^{-1}\partial_{tt}f\right\|_{L^2(\mathcal{T}; L^2(\mathcal{M}))}^2 \right).
		\end{aligned}
		\label{relations_between_E_and_G_4}
	\end{equation}
	
	Applying Young’s inequality to the inner product term $\left(G^i, D_t e^i\right)_{L^2(\mathcal{M})}$ in \eqref{relations_between_E_and_G_4}, we deduce that
		\begin{equation}
		\begin{aligned}
			&\sum_{i=1}^{N} \tau\left\|D_t e^i\right\|_{L^2(\mathcal{M})}^2 +\sum_{i=0}^{N}\tau \left\|\nabla_g e^i\right\|_{L^2(\mathcal{M})}^2\\
            \leq&\tau^4\left( \left\|\nabla_g \partial_{tt}u\right\|_{L^2(\mathcal{T}; L^2(\mathcal{M}))}^2 +\left\|\nabla_g\Delta_g^{-1}\partial_{tt}f\right\|_{L^2(\mathcal{T}; L^2(\mathcal{M}))}^2  \right)+\sum_{i=1}^{N}\tau\left\|G^i\right\|_{L^2(\mathcal{M})}^2\\
			\leq&c\tau^4\left( \left\|\nabla_g \partial_{tt}u\right\|_{L^2(\mathcal{T}; L^2(\mathcal{M}))}^2 +\left\|\nabla_g\Delta_g^{-1}\partial_{tt}f\right\|_{L^2(\mathcal{T}; L^2(\mathcal{M}))}^2 +\left\|\partial_{ttt}u\right\|_{L^2(\mathcal{T}; L^2(\mathcal{M}))}^2 \right).
		\end{aligned}
		\label{relations_between_E_and_G_5}
	\end{equation}
	
 Notice that 
 \begin{equation}
 	\begin{aligned}
 		&\left\|\nabla_g(I_{\tau}u-u_N)\right\|_{L^2(\mathcal{T}; L^2(\mathcal{M}))}^2 + \left\|\partial_t(I_{\tau}u-u_N)\right\|_{L^2(\mathcal{T}; L^2(\mathcal{M}))}^2 \\
 		= &\sum_{i=1}^{N}\int_{t_{i-1}}^{t_i}\left\|\nabla_g(I_{\tau}u-u_N)\right\|_{L^2(\mathcal{M})}^2\,\mathrm{d}t + \sum_{i=1}^{N}\int_{t_{i-1}}^{t_i}\left\|\partial_t(I_{\tau}u-u_N)\right\|_{L^2(\mathcal{M})}^2\,\mathrm{d}t\\
 			\leq&c\tau^4\left( \left\|\nabla_g \partial_{tt}u\right\|_{L^2(\mathcal{T};L^2(\mathcal{M}))}^2+\left\|\nabla_g\Delta_g^{-1}\partial_{tt}f\right\|_{L^2(\mathcal{T};L^2(\mathcal{M}))}^2+\left\|\partial_{ttt}u\right\|_{L^2(\mathcal{T};L^2(\mathcal{M}))}^2\right),
 	\end{aligned}
 \end{equation}
which concludes the proof.
\end{proof}

\subsection{Spatial supercloseness}\label{ssec:spa}
Let $I_h$ be the standard Lagrange interpolation operator from $C(\mathcal{M}_h)$ into $S_h$. From the standard surface finite element theory \cite{brenner2008fembook, ciarlet2002fembook, dziuk1988sfem}, it follows that 
\begin{theorem}    \label{thm:spatial_interpoltation_approximation}
    For any $\omega\in H^2(\mathcal{M})$, the interpolation operator $I_h$ satisfies the following properties:
    \begin{equation}
        \left\|T_h\omega-I_hT_h\omega\right\|_{L^2(\mathcal{M}_h)}+h\left\|\nabla_{g_h}\left(T_h\omega-I_hT_h\omega\right)\right\|_{L^2(\mathcal{M}_h)}\lesssim h^2\left\| \omega\right\|_{H^2(\mathcal{M})}.
    \end{equation}
\end{theorem}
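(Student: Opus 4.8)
The plan is to reduce the estimate to a classical piecewise-polynomial interpolation bound on the flat parametric domain $\Omega_h$ and then transport it back through the metric identities, staying entirely within the Riemannian-metric framework of the paper. Set $v = T_h\omega$; by Lemma~\ref{lem:boundedness_of_T_h} with $k=2$, $p=2$ we have $v\in H^2(\mathcal{M}_h)$ with $\|v\|_{H^2(\mathcal{M}_h)}\lesssim\|\omega\|_{H^2(\mathcal{M})}$. Write $\bar{v}=v\circ\pi_h$ for the pullback to $\Omega_h$. Since $P_h=\pi\circ\pi_h^{-1}$, one checks directly that $\bar{v}=\omega\circ\pi=\bar{\omega}$, the pullback of $\omega$ itself. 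Because $S_h$ is precisely the space whose pullback is piecewise linear on each $\Omega_h^j$, and $\pi_h$ carries vertices of $\Omega_h$ to vertices of $\mathcal{M}_h$, the pullback of $I_h v$ coincides with the standard continuous piecewise linear Lagrange interpolant $\overline{I_h}\,\bar{v}$ of $\bar{v}$ on the flat triangulation $\Omega_h$.

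First I would establish the geometric norm equivalences. Using the integral-of-gradients identity \eqref{eq:Theorem:Integral_of_gradients_transform} together with $\mathrm{d}\sigma_{g_h}=\sqrt{|g_h|}\,\mathrm{d}\sigma$, the $L^2(\mathcal{M}_h)$ norm of a function and the $L^2(\mathcal{M}_h)$ norm of its $\nabla_{g_h}$-gradient are comparable, with $h$-independent constants, to the $L^2(\Omega_h)$ norm and the patchwise flat $H^1$-seminorm of the pullback, respectively. The constants stay bounded because $\sqrt{|g_h|}$ and the eigenvalues of $g_h^{-1}$ are bounded above and below uniformly in $h$: this follows from the shape regularity of $\mathcal{M}_h$ and from Theorem~\ref{thm:metrictensor_approximation}, which keeps $g_h$ within $\mathcal{O}(h^2)$ of the smooth, uniformly nondegenerate metric $g$. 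Hence it suffices to bound $\|\bar{v}-\overline{I_h}\,\bar{v}\|_{L^2(\Omega_h)}$ and $|\bar{v}-\overline{I_h}\,\bar{v}|_{H^1(\Omega_h)}$.

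Next I would apply the classical linear interpolation error estimate on each flat, shape-regular triangle $\Omega_h^j$ of diameter $\lesssim h$ (the Bramble--Hilbert/Deny--Lions argument on a reference element followed by affine scaling):
\[
\|\bar{v}-\overline{I_h}\,\bar{v}\|_{L^2(\Omega_h^j)}\lesssim h^2\,|\bar{v}|_{H^2(\Omega_h^j)},\qquad |\bar{v}-\overline{I_h}\,\bar{v}|_{H^1(\Omega_h^j)}\lesssim h\,|\bar{v}|_{H^2(\Omega_h^j)},
\]
and summing over $j\in J$ yields the same bounds with the broken seminorm $|\bar{v}|_{H^2(\Omega_h)}$ on the right. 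Finally, since $\bar{v}=\omega\circ\pi$ with $\pi(x)=x-d(x)\nu(x)$ being $C^2$ (the manifold is $C^3$, so $d$ and $\nu$ are $C^2$ near $\mathcal{M}$), the chain rule expresses the flat second derivatives of $\bar{v}$ through the first and second derivatives of $\omega$ multiplied by uniformly bounded geometric factors, giving $|\bar{v}|_{H^2(\Omega_h)}\lesssim\|\omega\|_{H^2(\mathcal{M})}$; alternatively this also follows from $\|v\|_{H^2(\mathcal{M}_h)}\lesssim\|\omega\|_{H^2(\mathcal{M})}$ via Lemma~\ref{lem:boundedness_of_T_h}. Chaining the three steps delivers both the $L^2$ bound with factor $h^2$ and the $\nabla_{g_h}$ bound with factor $h$, which is the claim.

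The main obstacle is the first step: making the geometric norm equivalences genuinely $h$-uniform. One must verify that the metric weights $\sqrt{|g_h|}$ and $g_h^{-1}$ do not degenerate as $h\to 0$, which is exactly where shape regularity and the metric-approximation Theorem~\ref{thm:metrictensor_approximation} are essential. The remaining two steps are standard once everything is pulled back to the flat patches $\Omega_h$.
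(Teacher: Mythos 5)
Your proposal is correct: the paper itself gives no proof of this theorem, citing it as a consequence of standard surface finite element theory (Brenner--Scott, Ciarlet, Dziuk), and your argument is precisely that standard argument --- pull back to the flat parametric patches, apply the Bramble--Hilbert interpolation estimate on shape-regular triangles, and transport back using the uniform nondegeneracy of $g_h$ --- adapted cleanly to the paper's Riemannian-metric framework. The only simplification you could note is that with the paper's choice of $\Omega_h$ as the faces of $\mathcal{M}_h$, the map $\pi_h$ is an isometry, so $g_h$ is the Euclidean metric and your norm-equivalence step holds with equality rather than requiring Theorem~\ref{thm:metrictensor_approximation}.
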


For the interpolation operator $I_h$, \cite{wei2010sprsurface} also establishes the following supercloseness results.
\begin{lemma}
	Suppose the discrete surface $\mathcal{M}_h$ satisfies the $\mathcal{O}(h^{2\sigma})$ irregular condition.  If $\omega\in H^3(\mathcal{M})\cap W_{\infty}^2(\mathcal{M})$, then for all $v_h\in S_h$, we have
	\label{Lemma:integral_T_h_u-I_h_T_h_u}
	\begin{equation}
		\int_{\mathcal{M}_h}\nabla_{g_h}\left(T_h \omega-I_hT_h \omega\right)\cdot\nabla_{g_h}v_h\,\mathrm{d}\sigma_h\lesssim h^{1+\min\{1,\sigma\}}\left(\norm{\omega}_{H^3(\mathcal{M})}+\norm{\omega}_{W^{2}_{\infty}(\mathcal{M})}\right)\left|v_h\right|_{H^1(\mathcal{M}_h)}.
		\label{integral_T_h_u-I_h_T_h_u}
	\end{equation}
\end{lemma}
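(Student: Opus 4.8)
The plan is to pull the whole integral back to the common parametric domain $\Omega_h$, peel off the geometric discrepancy between $g$ and $g_h$ as a higher-order remainder, and then reduce the problem to a variable-coefficient planar supercloseness estimate that is settled by the classical parallelogram-cancellation argument, split according to the partition in the irregular condition.

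First I would apply the gradient-transform identity \eqref{eq:Theorem:Integral_of_gradients_transform} to rewrite the left-hand side as
\[
\int_{\mathcal{M}_h}\nabla_{g_h}(T_h\omega - I_h T_h\omega)\cdot\nabla_{g_h}v_h\,\mathrm{d}\sigma_{g_h}
= \sum_{j\in J}\int_{\Omega_h^j}\bigl(\nabla(\bar\omega - \bar I_h\bar\omega)\bigr)^{\top} g_h^{-1}\,\nabla\bar v_h\,\sqrt{|g_h|}\,\mathrm{d}\sigma,
\]
where $\bar\omega = (T_h\omega)\circ\pi_h = \omega\circ\pi$ is the pullback of $\omega$, $\bar v_h = v_h\circ\pi_h$, and $\bar I_h$ is the ordinary piecewise linear Lagrange interpolation on the flat triangulation $\Omega_h$; here $\overline{I_h T_h\omega}=\bar I_h\bar\omega$ because the nodal values match and $\bar v_h\in\mathbb{P}^1(\Omega_h^j)$. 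Next I would freeze the metric by writing $g_h^{-1}\sqrt{|g_h|} = g^{-1}\sqrt{|g|} + R$ with $R := g_h^{-1}\sqrt{|g_h|} - g^{-1}\sqrt{|g|}$. By Theorem~\ref{thm:metrictensor_approximation} one has $\norm{R}_{L^\infty}\lesssim h^2$, so using Theorem~\ref{thm:spatial_interpoltation_approximation} and Lemma~\ref{lem:boundedness_of_T_h} the $R$-contribution is controlled by
\[
\norm{R}_{L^\infty}\,\norm{\nabla(\bar\omega-\bar I_h\bar\omega)}_{L^2(\Omega_h)}\,\norm{\nabla\bar v_h}_{L^2(\Omega_h)}
\lesssim h^2\cdot h\,\norm{\omega}_{H^2(\mathcal{M})}\cdot|v_h|_{H^1(\mathcal{M}_h)},
\]
an $O(h^3)$ term that is subdominant. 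The essential point is that $g^{-1}\sqrt{|g|}=:B$ is a genuinely smooth weight on each patch (whereas $g_h$ is only piecewise constant), which makes the freezing argument in the next step legitimate.

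It then remains to estimate the frozen bilinear form $\sum_j\int_{\Omega_h^j}(\nabla(\bar\omega-\bar I_h\bar\omega))^{\top} B\,\nabla\bar v_h\,\mathrm{d}\sigma$, which I would split along the partition $\mathcal{M}_{1,h}\cup\mathcal{M}_{2,h}$. On the regular part $\mathcal{M}_{1,h}$, where adjacent triangles form $\mathcal{O}(h^2)$ parallelograms, I would use the classical element-pairing argument: since $\nabla\bar v_h$ is piecewise constant and $B$ is smooth, freeze $B$ at a common point of each parallelogram (its variation being $O(h)$), Taylor-expand the interpolation error, and exploit the antisymmetry of the expansion over the two triangles of an exact parallelogram so that the leading second-derivative contributions cancel. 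The $\mathcal{O}(h^2)$ defect of the approximate parallelogram together with the cubic Taylor remainder yields an $O(h^2)$ bound after summation and a Cauchy–Schwarz step in $v_h$, giving $\lesssim h^2(\norm{\omega}_{H^3(\mathcal{M})}+\norm{\omega}_{W^2_\infty(\mathcal{M})})|v_h|_{H^1(\mathcal{M}_h)}$. On the irregular part $\mathcal{M}_{2,h}$ no cancellation is available, so I would use the pointwise bound $\norm{\nabla(\bar\omega-\bar I_h\bar\omega)}_{L^\infty}\lesssim h\norm{\omega}_{W^2_\infty(\mathcal{M})}$ and Cauchy–Schwarz together with the total-area bound from the irregular condition:
\[
\Bigl|\sum_{\mathcal{M}_h^j\in\mathcal{M}_{2,h}}\int_{\Omega_h^j}(\nabla(\bar\omega-\bar I_h\bar\omega))^{\top}B\,\nabla\bar v_h\,\mathrm{d}\sigma\Bigr|
\lesssim h\,\norm{\omega}_{W^2_\infty(\mathcal{M})}\Bigl(\sum_{\mathcal{M}_h^j\in\mathcal{M}_{2,h}}|\mathcal{M}_h^j|\Bigr)^{1/2}|v_h|_{H^1(\mathcal{M}_h)}
\lesssim h^{1+\sigma}\norm{\omega}_{W^2_\infty(\mathcal{M})}|v_h|_{H^1(\mathcal{M}_h)}.
\]
Combining the regular contribution of order $h^2$ with the irregular one of order $h^{1+\sigma}$ produces the exponent $1+\min\{1,\sigma\}$ and the stated norm combination.

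I expect the main obstacle to be the parallelogram cancellation on $\mathcal{M}_{1,h}$ in the presence of the variable weight $B$: one must show simultaneously that the frozen-coefficient leading term vanishes by the symmetry of the parallelogram, and that the $\mathcal{O}(h^2)$ parallelogram defect, the $O(h)$ variation of $B$, and the cubic Taylor remainder all enter only at the admissible order $h^2$ after the global summation. Keeping track of which derivatives appear where — $\norm{\omega}_{H^3}$ from the remainder on the regular part versus $\norm{\omega}_{W^2_\infty}$ from the pointwise gradient bound on the irregular part — is the delicate bookkeeping that pins down the exact regularity assumption $\omega\in H^3(\mathcal{M})\cap W^2_\infty(\mathcal{M})$.
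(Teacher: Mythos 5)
The paper does not prove this lemma at all --- it is quoted from \cite{wei2010sprsurface} --- so there is no internal proof to compare against; your proposal has to be judged as a reconstruction of the argument behind the cited result, and as such it is sound. The pullback via \eqref{eq:Theorem:Integral_of_gradients_transform}, the identification $\overline{I_hT_h\omega}=\bar I_h\bar\omega$, the peeling off of $R=g_h^{-1}\sqrt{|g_h|}-g^{-1}\sqrt{|g|}$ with $\|R\|_{L^\infty}\lesssim h^2$ (hence an $O(h^3)$ contribution by Theorem \ref{thm:metrictensor_approximation} and Theorem \ref{thm:spatial_interpoltation_approximation}), the $h^{1+\sigma}$ bound on $\mathcal{M}_{2,h}$ from the $L^\infty$ interpolation-gradient estimate plus Cauchy--Schwarz and the area condition, and the resulting exponent $1+\min\{1,\sigma\}$ are all correct. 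For the regular part you need not re-derive the cancellation from scratch: after freezing the smooth weight $B=g^{-1}\sqrt{|g|}$ (an $O(h)$ perturbation which, multiplied against the $O(h)$ interpolation-gradient error, is already of the admissible order $h^2$), what remains is exactly the supercloseness estimate on mildly structured planar meshes in the style of Bank--Xu and Xu--Zhang, which is also the engine inside \cite{wei2010sprsurface}. One organizational caveat: the cancellation is edge-based rather than element-based --- the expansion of $\int\partial_i(\bar\omega-\bar I_h\bar\omega)\,\partial_j\bar v_h$ produces terms attached to interior edges, and it is the two coefficients contributed to each edge by its two adjacent triangles that nearly cancel under the $O(h^2)$-parallelogram condition; edges on the interface between $\mathcal{M}_{1,h}$ and $\mathcal{M}_{2,h}$ lack a partner and must be charged to the irregular contribution, which the area condition still absorbs. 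Finally, your route has a merit the bare citation lacks: \cite{wei2010sprsurface} works in the tangential-projection setting where $\mathcal{M}_h$ interpolates $\mathcal{M}$, whereas this paper assumes only \eqref{equ:mesh_assumption}; your metric-based pullback combined with Theorem \ref{thm:metrictensor_approximation} is precisely the translation needed for the lemma to hold in the paper's more general geometric framework.
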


Before we present the main supercloseness result, we need the following lemma about the spatial truncation error.
\begin{lemma}\label{lem:intermediate_result}
	Suppose the discrete surface $\mathcal{M}_h$ satisfies the $\mathcal{O}(h^{2\sigma})$ irregular condition. Let $u^i\in H^3(\mathcal{M})\cap W_{\infty}^2(\mathcal{M})$ be the solution of temporal semi-discretization \eqref{equ:semi_discretization} and $u^i_h$ be the solution of full-discretization \eqref{equ:full_discrete}. Then, we have
		\begin{equation} \label{equ:intermediate_result}
        	\begin{aligned}
			  &-\int_{\mathcal{M}_h}D_{tt}(u_h^i-I_h T_h u^i)\cdot v_h\,\mathrm{d}\sigma_{g_h}+\int_{\mathcal{M}_h}\nabla_{g_h}(u_h^i-I_h T_h u^i)\cdot\nabla_{g_h}v_h\,\mathrm{d}\sigma_{g_h}\\
			\lesssim & \; h^{1+\min\{1,\sigma\}}{\left( \left\|u^i\right\|_{H^3(\mathcal{M})}+\left\|u^i\right\|_{W^{2}_{\infty}(\mathcal{M})}\right)
			}\left\|\nabla_{g_h}v_h\right\|_{L^2(\mathcal{M}_h)}-\int_{\mathcal{M}_h}D_{tt}(T_hu^i-I_h T_h u^i) v_h\,\mathrm{d}\sigma_{g_h},
		\end{aligned}
	\end{equation}
	for all $v_h\in S_h$ and $i=0, \cdots, N$. 
\end{lemma}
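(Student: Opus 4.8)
The plan is to derive an error equation for $\xi_h := u_h^i - I_h T_h u^i \in S_h$ by subtracting the full-discrete scheme \eqref{equ:full_discrete} from the semi-discrete weak form \eqref{equ:semi_discretization_weak} transplanted onto $\mathcal{M}_h$, and then to reduce everything to purely geometric defect integrals over the common parametric domain $\Omega_h$. Writing the left-hand side of \eqref{equ:intermediate_result} as
\[
E := -(D_{tt}\xi_h, v_h)_{\mathcal{M}_h} + (\nabla_{g_h}\xi_h, \nabla_{g_h}v_h)_{\mathcal{M}_h},
\]
I would first use \eqref{equ:full_discrete} to eliminate the $u_h^i$-contributions and split $I_h T_h u^i = T_h u^i - (T_h u^i - I_h T_h u^i)$ in both the mass and the stiffness parts. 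This yields $E = R + E_{\mathrm{int}} + E_D$, where $E_D = -(D_{tt}(T_h u^i - I_h T_h u^i), v_h)_{\mathcal{M}_h}$ is exactly the term retained on the right-hand side of \eqref{equ:intermediate_result}, $E_{\mathrm{int}} = (\nabla_{g_h}(T_h u^i - I_h T_h u^i), \nabla_{g_h}v_h)_{\mathcal{M}_h}$ is the stiffness interpolation defect, and
\[
R := (T_h(f(t_i)+b^i), v_h)_{\mathcal{M}_h} + (D_{tt}T_h u^i, v_h)_{\mathcal{M}_h} - (\nabla_{g_h}T_h u^i, \nabla_{g_h}v_h)_{\mathcal{M}_h}
\]
is the geometric consistency residual. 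The term $E_{\mathrm{int}}$ is controlled directly by Lemma~\ref{Lemma:integral_T_h_u-I_h_T_h_u} (using $|v_h|_{H^1(\mathcal{M}_h)}\simeq\|\nabla_{g_h}v_h\|_{L^2(\mathcal{M}_h)}$), producing the advertised factor $h^{1+\min\{1,\sigma\}}(\|u^i\|_{H^3(\mathcal{M})}+\|u^i\|_{W^2_\infty(\mathcal{M})})\|\nabla_{g_h}v_h\|_{L^2(\mathcal{M}_h)}$, so it remains to estimate $R$.

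Next I would evaluate $R$ by pulling every integral back to $\Omega_h$. Since $D_{tt}$ acts only on the temporal index it commutes with the spatial operator $T_h$, so $D_{tt}T_h u^i = T_h D_{tt}u^i$; moreover the pullback identity $\overline{T_h\omega}=\bar\omega$ together with \eqref{eq:Theorem:Integral_of_gradients_transform} shows that each $\mathcal{M}_h$-integral in $R$ and the corresponding $\mathcal{M}$-integral obtained by testing \eqref{equ:semi_discretization_weak} with $v=T_h^{-1}v_h\in H^1(\mathcal{M})$ share the same parametric integrand, differing only through the weights $\sqrt{|g_h|}$ versus $\sqrt{|g|}$ in the mass part and $g_h^{-1}\sqrt{|g_h|}$ versus $g^{-1}\sqrt{|g|}$ in the stiffness part. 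Since the transplanted semi-discrete weak form has vanishing parametric expression, subtracting it gives $R = R_1 + R_2 - R_3$, with the mass defect $R_1+R_2 = \sum_{j}\int_{\Omega_h^j}\overline{(f(t_i)+b^i+D_{tt}u^i)}\,\bar v_h(\sqrt{|g_h|}-\sqrt{|g|})\,\mathrm{d}\sigma$ and the stiffness defect $R_3 = \sum_{j}\int_{\Omega_h^j}(\nabla\overline{u^i})^\top(g_h^{-1}\sqrt{|g_h|}-g^{-1}\sqrt{|g|})\nabla\bar v_h\,\mathrm{d}\sigma$.

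I would then bound the two defects. For $R_3$, writing $g_h^{-1}\sqrt{|g_h|}-g^{-1}\sqrt{|g|} = (g_h^{-1}-g^{-1})\sqrt{|g_h|} + g^{-1}(\sqrt{|g_h|}-\sqrt{|g|})$ and $g_h^{-1}-g^{-1} = g_h^{-1}(g-g_h)g^{-1}$, Theorem~\ref{thm:metrictensor_approximation} furnishes an $L^\infty$ bound of order $h^2$ for this coefficient matrix, so $R_3\lesssim h^2\|u^i\|_{H^1(\mathcal{M})}\|\nabla_{g_h}v_h\|_{L^2(\mathcal{M}_h)}$ (via Lemma~\ref{lem:boundedness_of_T_h} for $\|\nabla\overline{u^i}\|_{L^2(\Omega_h)}$), which is absorbed into the stated rate since $h^2\leq h^{1+\min\{1,\sigma\}}$. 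For the mass defect I would invoke the strong semi-discrete equation \eqref{equ:semi_discretization}, namely $f(t_i)+b^i+D_{tt}u^i = -\Delta_g u^i$, so that $R_1+R_2 = -\sum_{j}\int_{\Omega_h^j}\overline{\Delta_g u^i}\,\bar v_h(\sqrt{|g_h|}-\sqrt{|g|})\,\mathrm{d}\sigma$; the second estimate in \eqref{equ:metrictensor_approximation} then bounds the area defect by $h^2$, giving $R_1+R_2\lesssim h^2\|u^i\|_{H^2(\mathcal{M})}\|v_h\|_{L^2(\mathcal{M}_h)}$. Note that the use of the strong form is precisely what makes the right-hand side depend on $u^i$ alone rather than on neighbouring time levels.

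The main obstacle is the final conversion of this $L^2$ pairing into the gradient norm $\|\nabla_{g_h}v_h\|_{L^2(\mathcal{M}_h)}$ demanded by \eqref{equ:intermediate_result}. Here I would use that $\mathcal{M}$ is boundaryless, whence $\int_{\mathcal{M}}\Delta_g u^i\,\mathrm{d}\sigma_g = 0$, i.e. $\int_{\Omega_h}\overline{\Delta_g u^i}\sqrt{|g|}\,\mathrm{d}\sigma = 0$. Decomposing $v_h = (v_h-c_h)+c_h$ with $c_h$ the $g_h$-average of $v_h$, the mean-zero remainder is estimated by the Poincaré inequality $\|v_h-c_h\|_{L^2(\mathcal{M}_h)}\lesssim\|\nabla_{g_h}v_h\|_{L^2(\mathcal{M}_h)}$ (uniform in $h$ because $\mathcal{M}_h$ approximates a fixed compact manifold), yielding the required $h^2\|u^i\|_{H^2(\mathcal{M})}\|\nabla_{g_h}v_h\|_{L^2(\mathcal{M}_h)}$. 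The delicate point is the constant mode, which pairs against $\int_{\Omega_h}\overline{\Delta_g u^i}(\sqrt{|g_h|}-\sqrt{|g|})\,\mathrm{d}\sigma$: its exact part vanishes by the boundaryless identity, and the residual $O(h^2)$ area-defect contribution is eliminated by the zero-average normalization \eqref{equ:sol_constrain} inherited by the admissible test functions (equivalently, the estimate is read for mean-zero $v_h$, for which $c_h=0$). Collecting $E_{\mathrm{int}}$, $R_3$ and $R_1+R_2$ then gives \eqref{equ:intermediate_result}, with the leading-order rate $h^{1+\min\{1,\sigma\}}$ originating from the interpolation supercloseness defect $E_{\mathrm{int}}$.
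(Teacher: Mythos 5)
Your decomposition is essentially the paper's: the split $E = R + E_{\mathrm{int}} + E_D$ coincides with the paper's rearrangement into an ``equation'' part (your $R$, obtained by inserting the fully discrete scheme \eqref{equ:full_discrete}) and a ``closeness'' part (your $E_{\mathrm{int}} + E_D$), and your treatment of $E_{\mathrm{int}}$ via Lemma~\ref{Lemma:integral_T_h_u-I_h_T_h_u} and of the stiffness defect $R_3$ via Theorem~\ref{thm:metrictensor_approximation} reproduces the paper's estimates, giving the same rates $h^{1+\min\{1,\sigma\}}$ and $h^2$. Your use of the strong form $f(t_i)+b^i+D_{tt}u^i = -\Delta_g u^i$ also parallels the paper, which adds and subtracts $T_h\Delta_g u^i\, v_h$ and invokes Green's identity on $\mathcal{M}$ for exactly the same purpose.

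The genuine gap is your handling of the constant mode in the mass defect. The lemma is stated for \emph{all} $v_h \in S_h$, and in its only application (the proof of Theorem~\ref{thm:spatial_supercloseness}) it is tested with $v_h$ proportional to $u_h^i - I_h T_h u^i$, which has no reason to have zero mean on $\mathcal{M}_h$ at any fixed time level: the normalization \eqref{equ:sol_constrain} constrains only the space-time average of the solution $u$, not spatial test functions, so it is not ``inherited by the admissible test functions.'' Consequently the contribution
\begin{equation*}
  c_h \sum_{j}\int_{\Omega_h^j} \overline{\Delta_g u^i}\,\bigl(\sqrt{|g_h|}-\sqrt{|g|}\bigr)\,\mathrm{d}\sigma
  \;=\; c_h \int_{\mathcal{M}_h} T_h\bigl(\Delta_g u^i\bigr)\,\mathrm{d}\sigma_{g_h}
\end{equation*}
(the second equality uses $\int_{\mathcal{M}}\Delta_g u^i\,\mathrm{d}\sigma_g=0$) survives; it is generically nonzero of size $O(h^2)\,|c_h|\,\|u^i\|_{H^2(\mathcal{M})}$, and $|c_h|$ cannot be bounded by $\|\nabla_{g_h} v_h\|_{L^2(\mathcal{M}_h)}$. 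So as written your argument only proves the estimate for mean-zero $v_h$, which is weaker than the stated lemma and insufficient for its downstream use. It is worth noting that your careful parametric bookkeeping has in fact isolated a step that the paper's own proof passes over: the paper disposes of this same weighted mass term by pulling $\bigl\|\sqrt{|g_h|}/\sqrt{|g|}\bigr\|_{L^\infty}$ out of a \emph{signed} integral before applying Green's identity, an inequality that is not valid as stated; done rigorously, the paper's route produces the same residual term $h^2\|u^i\|_{H^2(\mathcal{M})}\|v_h\|_{L^2(\mathcal{M}_h)}$ you are stuck with. But that observation does not close your proof: either the lemma must carry this extra $\|v_h\|_{L^2(\mathcal{M}_h)}$ term on its right-hand side (with corresponding changes in Theorem~\ref{thm:spatial_supercloseness}), or a genuinely different argument is needed to kill the constant mode.
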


\begin{proof}
	To establish \eqref{equ:intermediate_result}, we rewrite the left hand side of  \eqref{equ:intermediate_result} to get
	\begin{align}
			  &-\int_{\mathcal{M}_h}D_{tt}(u_h^i-I_h T_h u^i) v_h\,\mathrm{d}\sigma_{g_h}+\int_{\mathcal{M}_h}\nabla_{g_h}(u_h^i-I_h T_h u^i)\cdot\nabla_{g_h}v_h\,\mathrm{d}\sigma_{g_h}\nonumber \\
		=   &-\int_{\mathcal{M}_h}(D_{tt}u_h^i-D_{tt} T_hu^i) v_h\,\mathrm{d}\sigma_{g_h}+\int_{\mathcal{M}_h}(\nabla_{g_h}u_h^i -\nabla_{g_h}T_hu^i)\cdot\nabla_{g_h}v_h\,\mathrm{d}\sigma_{g_h} \label{form:equation}\\
			  &-\int_{\mathcal{M}_h}D_{tt}(T_hu^i-I_h T_h u^i) v_h\,\mathrm{d}\sigma_{g_h}+\int_{\mathcal{M}_h}\nabla_{g_h}(T_hu^i-I_h T_h u^i)\cdot\nabla_{g_h}v_h\,\mathrm{d}\sigma_{g_h} \label{form:closeness}.
	\end{align}
    
Let us have a close look to the two terms in the two lines \eqref{form:equation} and \eqref{form:closeness} separately.

For \eqref{form:equation}, we have 
	\begin{align}
   & -\int_{\mathcal{M}_h}(D_{tt}u_h^i-D_{tt} T_hu^i) v_h\,\mathrm{d}\sigma_{g_h}+\int_{\mathcal{M}_h}(\nabla_{g_h}u_h^i -\nabla_{g_h}T_hu^i)\cdot\nabla_{g_h}v_h\,\mathrm{d}\sigma_{g_h}\nonumber
   \\
=&\int_{\mathcal{M}_h} (-D_{tt}u_h^i v_h+\nabla_{g_h}u_h^i \cdot \nabla_{g_h} v_h )\,\mathrm{d}\sigma_{g_h}
    -\int_{\mathcal{M}_h}( -D_{tt} T_hu^i v_h +\nabla_{g_h}T_hu^i\cdot \nabla_{g_h}v_h)\,\mathrm{d}\sigma_{g_h}. 
    \label{eq:keyeqn}
    \end{align}
    
The first term on the right hand side of Equation \eqref{eq:keyeqn} can be written equivalently as follows using the finite element formulation of equation \eqref{equ:semi_discretization_weak}
    \begin{equation*}
\int_{\mathcal{M}_h}(-D_{tt}u_h^i v_h+\nabla_{g_h}u_h^i \cdot \nabla_{g_h} v_h )\,\mathrm{d}\sigma_{g_h}= \int_{\mathcal{M}_h}T_h(f(t_i)+b^i) v_h\,\mathrm{d}\sigma_{g_h}.
    \end{equation*}
    
Now, taking into account the product in the form of \eqref{eq:Theorem:Integral_of_gradients_transform}, we estimate the second term on the right hand side of Equation \eqref{eq:keyeqn}.
\begin{align*}
 & \int_{\mathcal{M}_h}( -D_{tt} T_hu^i v_h +\nabla_{g_h}T_hu^i\cdot \nabla_{g_h}v_h)\,\mathrm{d}\sigma_{g_h}\\
 =&
 \int_{\mathcal{M}_h} T_h(-D_{tt} u^i -\Delta_{g}u^i)   v_h\,\mathrm{d}\sigma_{g_h}
 -\int_{\mathcal{M}_h} (-T_h\Delta_{g}u^i v_h -\nabla _{g_h} T_h u^i\cdot \nabla _{g_h} v_h)\,\mathrm{d}\sigma_{g_h}\\
  =&
 \int_{\mathcal{M}_h} T_h(f(t_i)+b^i) v_h \,\mathrm{d}\sigma_{g_h}
 - \int_{\mathcal{M}} (-\Delta_{g}u^i\, T_h^{-1}v_h - T_h^{-1}(\nabla _{g_h} T_h u^i\cdot \nabla _{g_h} v_h))\,\frac{\sqrt{|g_h|}}{\sqrt{|g|}}\mathrm{d}\sigma_{g} \\
\geq &\int_{\mathcal{M}_h} T_h(f(t_i)+b^i) v_h \,\mathrm{d}\sigma_{g_h} - \norm{\frac{\sqrt{|g_h|}}{\sqrt{|g|}}}_{L^{\infty}} \left|\int_{\mathcal{M}} (-\Delta_{g}u^i\, T_h^{-1}v_h - T_h^{-1}(\nabla _{g_h} T_h u^i\cdot \nabla _{g_h} v_h))\,\mathrm{d}\sigma_{g}\right| \\
\geq &\int_{\mathcal{M}_h} T_h(f(t_i)+b^i) v_h \,\mathrm{d}\sigma_{g_h}
-  \norm{\frac{\sqrt{|g_h|}}{\sqrt{|g|}}}_{L^{\infty}}\norm{g(g^{-1}-g_h^{-1})}_{L^{\infty}} \left|\int_{\mathcal{M}}  \nabla_g u^i \cdot \nabla_g T_h^{-1}v_h \,\mathrm{d}\sigma_{g}\right|.
\end{align*}

We combine all these estimates and go back to \eqref{eq:keyeqn}. Using Theorem \ref{thm:metrictensor_approximation} and the boundedness of $T_h$ \eqref{equ:boundedness_of_T_h}, we have
\begin{align*}
&-\int_{\mathcal{M}_h}(D_{tt}u_h^i-D_{tt} T_hu^i) v_h\,\mathrm{d}\sigma_{g_h}+\int_{\mathcal{M}_h}(\nabla_{g_h}u_h^i -\nabla_{g_h}T_hu^i)\cdot\nabla_{g_h}v_h\,\mathrm{d}\sigma_{g_h}\\
\leq & \norm{\frac{\sqrt{g_h}}{\sqrt{g}}}_{L^{\infty}}\norm{g(g^{-1}-g_h^{-1})}_{L^{\infty}} \left|\int_{\mathcal{M}}  \nabla_g u^i \cdot \nabla_g T_h^{-1}v_h \,\mathrm{d}\sigma_{g}\right|\\
\lesssim & h^2 \left|u^i\right|_{H^1(\mathcal{M})}\left|T_h^{-1}v_h\right|_{H^1(\mathcal{M})}
\lesssim h^2|u^i|_{H^1(\mathcal{M})}|v_h|_{H^1(\mathcal{M}_h)}.
\end{align*}

The second term in \eqref{form:closeness} is estimated using \eqref{integral_T_h_u-I_h_T_h_u} in Lemma \ref{lem:intermediate_result} directly. 
\begin{align*}
 &\int_{\mathcal{M}_h}\nabla_{g_h}(T_hu^i-I_h T_h u^i)\cdot\nabla_{g_h}v_h\,\mathrm{d}\sigma_{g_h} 
\lesssim  h^{1+\min\{1,\sigma\}}\left( \norm{u^i}_{H^3(\mathcal{M})}+\norm{u^i}_{W^{2}_{\infty}(\mathcal{M})}\right)\left\|\nabla_{g_h}v_h\right\|_{L^2(\mathcal{M}_h)}.
\end{align*}

Summarizing all the estimates we have the conclusion for all $i=0,1,\cdots,N$.

\end{proof}

\begin{remark}
 Using a general regularity result for second-order elliptic operators (cf. \cite{Aubin82,evans2010pdebook, grigoryan2009heat}) and an induction argument, we obtain that for $k\geq 2$ and $ p\in\{2,\infty\} $, if $f(t_i),\, \mu_0,\, \mu_1\in W^{k-2}_{p}(\mathcal{M})$, then $u^i\in W^{k}_{p}(\mathcal{M})$ for $i=1,\ldots,N$. Consequently, the condition in Lemma~\ref{lem:intermediate_result} that $u^i\in H^3(\mathcal{M})\cap W_{\infty}^2(\mathcal{M})$ can be satisfied by requiring, e.g., 
    $f(t_i),\mu_0,\, \mu_1\in H^1(\mathcal{M})\cap L^{\infty}(\mathcal{M})$.
On the other hand, in the following Theorems, if we further assume $u\in H^4(\mathcal{T};H^2(\mathcal{M}))\cap H^2(\mathcal{T};H^3(\mathcal{M})\cap W^{2}_{\infty}(\mathcal{M}))$, then the regularity of $u^i\in H^3(\mathcal{M})\cap W_{\infty}^2(\mathcal{M})$ will be assured by the indicated regularity passing from $u$ to $f$ and $\mu_0,\, \mu_1$. 

\end{remark}

With the above preparation, we are now ready to present our main supercloseness results of FDM-sFEM on the spatial manifold.
\begin{theorem}	\label{thm:spatial_supercloseness}
	Suppose the discrete surface $\mathcal{M}_h$ satisfies the $\mathcal{O}(h^{2\sigma})$ irregular condition. Let $u_N$ be the semi-discrete FDM solution defined in \eqref{equ:semi-discrete_solution} with $u^i\in H^3(\mathcal{M})\cap W_{\infty}^2(\mathcal{M})$ and $u_h$ be the FDM-sFEM solution defined in \eqref{equ:full_discrete}. Then, we have 	
		\begin{equation} \label{equ:spatial_supercloseness}
		\begin{aligned}
			&\left\|\nabla_{g_h}\left(u_h-I_hT_hu_N\right)\right\|_{L^2(\mathcal{T};L^2(\mathcal{M}))}^2
			+ \left\|\partial_{t}\left(u_h-I_hT_hu_N\right)\right\|_{L^2(\mathcal{T};L^2(\mathcal{M}))}^2 \\
			\lesssim&h^{2+2\min\{1,\sigma\}}\left(\left\|u_N\right\|_{L^2(\mathcal{T};H^3(\mathcal{M}))}^2+\left\|u_N\right\|_{L^2(\mathcal{T};W^{2}_{\infty}(\mathcal{M}))}^2\right)
			+h^4\left\|\partial_tu_N\right\|_{L^2(\mathcal{T};H^2(\mathcal{M}))}^2.
		\end{aligned}
	\end{equation}
\end{theorem}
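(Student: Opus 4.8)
The plan is to run a discrete energy argument over the time index, using the per-slice estimate of Lemma~\ref{lem:intermediate_result} together with the summation by parts identity \eqref{equ:summation_by_parts} to convert the discrete second derivative into a sum of squared first differences. First I set $\xi_h^i := u_h^i - I_h T_h u^i \in S_h$, so that $u_h - I_h T_h u_N = \sum_{i=0}^N \xi_h^i \phi^i(t)$, and take the test function $v_h = \xi_h^i$ in \eqref{equ:intermediate_result}. Multiplying the resulting inequality by $\tau w_i$, with the weights $w_0 = w_N = 1/2$ and $w_i = 1$ of the summation by parts lemma, and summing over $i = 0,\dots,N$, the identity \eqref{equ:summation_by_parts} turns $-\sum_i \tau w_i (D_{tt}\xi_h^i,\xi_h^i)_{\mathcal{M}_h}$ into $\tau\sum_{i=1}^N \|D_t\xi_h^i\|_{L^2(\mathcal{M}_h)}^2$, producing on the left-hand side the discrete energy
\[
\tau\sum_{i=1}^N \|D_t\xi_h^i\|_{L^2(\mathcal{M}_h)}^2 + \tau\sum_{i=0}^N w_i\|\nabla_{g_h}\xi_h^i\|_{L^2(\mathcal{M}_h)}^2 .
\]

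On the right-hand side two terms survive. The consistency term $\tau\sum_i w_i\, h^{1+\min\{1,\sigma\}}(\|u^i\|_{H^3(\mathcal{M})}+\|u^i\|_{W^2_\infty(\mathcal{M})})\|\nabla_{g_h}\xi_h^i\|_{L^2(\mathcal{M}_h)}$ is handled by Young's inequality: the factor $\|\nabla_{g_h}\xi_h^i\|^2$ is absorbed into the energy, leaving $C\, h^{2+2\min\{1,\sigma\}}\,\tau\sum_i w_i\big(\|u^i\|_{H^3(\mathcal{M})}^2 + \|u^i\|_{W^2_\infty(\mathcal{M})}^2\big)$. Through the one-dimensional mass-matrix weighting of the hat functions $\phi^i$ this weighted sum is equivalent to $h^{2+2\min\{1,\sigma\}}\big(\|u_N\|_{L^2(\mathcal{T};H^3(\mathcal{M}))}^2 + \|u_N\|_{L^2(\mathcal{T};W^2_\infty(\mathcal{M}))}^2\big)$, which is the first group of terms in \eqref{equ:spatial_supercloseness}.

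The delicate contribution is the remaining discrete-second-derivative term $-\sum_i \tau w_i (D_{tt}(T_h u^i - I_h T_h u^i), \xi_h^i)_{\mathcal{M}_h}$. Here I apply the summation by parts identity \eqref{equ:summation_by_parts} a second time to rewrite it as $\tau\sum_{i=1}^N (D_t(T_h u^i - I_h T_h u^i), D_t\xi_h^i)_{\mathcal{M}_h}$. The key observation is that the temporal difference operator $D_t$ acts only on the time index and hence commutes with the purely spatial operators $T_h$ and $I_h$, giving $D_t(T_h u^i - I_h T_h u^i) = T_h D_t u^i - I_h T_h D_t u^i$. This lets me invoke the interpolation estimate of Theorem~\ref{thm:spatial_interpoltation_approximation} with $\omega = D_t u^i$ instead of $u^i$, so that $\|D_t(T_h u^i - I_h T_h u^i)\|_{L^2(\mathcal{M}_h)} \lesssim h^2\|D_t u^i\|_{H^2(\mathcal{M})}$. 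After a further Young inequality, absorbing $\|D_t\xi_h^i\|^2$ into the energy, this term contributes $C\, h^4\,\tau\sum_{i=1}^N\|D_t u^i\|_{H^2(\mathcal{M})}^2 = C\, h^4\|\partial_t u_N\|_{L^2(\mathcal{T};H^2(\mathcal{M}))}^2$, precisely the last term in \eqref{equ:spatial_supercloseness}.

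Finally I translate the discrete left-hand side back to the continuous norms. Since $\partial_t(u_h - I_h T_h u_N)$ equals $D_t\xi_h^i$ on each subinterval $[t_{i-1},t_i]$, one has $\|\partial_t(u_h - I_h T_h u_N)\|_{L^2(\mathcal{T};L^2(\mathcal{M}))}^2 = \tau\sum_{i=1}^N\|D_t\xi_h^i\|_{L^2(\mathcal{M}_h)}^2$, while $\|\nabla_{g_h}(u_h - I_h T_h u_N)\|_{L^2(\mathcal{T};L^2(\mathcal{M}))}^2$ is controlled from above by $\tau\sum_i w_i\|\nabla_{g_h}\xi_h^i\|_{L^2(\mathcal{M}_h)}^2$ through the same mass-matrix equivalence. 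I expect the main obstacle to be this consistency term involving $D_{tt}(T_h u^i - I_h T_h u^i)$: one must resist estimating it slice-by-slice, which would cost a factor $\tau^{-2}$ and ruin the bound, and instead use summation by parts together with the $D_t$–spatial commutation so that the order-$h^2$ interpolation estimate is applied to the smoother quantity $D_t u^i$. The norm-equivalence bookkeeping between the weighted discrete sums and the $L^2(\mathcal{T};\cdot)$ norms is routine but should be stated with care because of the non-uniform endpoint weights $w_0$ and $w_N$.
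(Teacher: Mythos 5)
Your proposal is correct and follows essentially the same route as the paper's own proof: you test Lemma~\ref{lem:intermediate_result} with the weighted multiples of $\xi_h^i=u_h^i-I_hT_hu^i$, apply the summation by parts identity \eqref{equ:summation_by_parts} both to the $D_{tt}\xi_h^i$ term (producing the discrete energy) and to the $D_{tt}(T_hu^i-I_hT_hu^i)$ term, then use the commutation of $D_t$ with the spatial operators $T_h,I_h$ together with Theorem~\ref{thm:spatial_interpoltation_approximation} applied to $D_tu^i$ to get the $h^4\|\partial_t u_N\|_{L^2(\mathcal{T};H^2(\mathcal{M}))}^2$ contribution, exactly as in the paper. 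The Young-inequality absorptions and the discrete-to-continuous norm equivalences you describe are also the ones used there, so there is no gap.
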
 

\begin{proof}
    To show \eqref{equ:spatial_supercloseness}, we sum \eqref{equ:intermediate_result} for $i = 1, \ldots, N-1$ with $ v_h = 2\tau\left(u_h^i - I_h T_h u^i\right) $  and include the boundary contributions at \( i = 0 \) and \( i = N \) with \( v_h = \frac{1}{2} \tau \left(u_h^i - I_h T_h u^i\right) \). Then, by applying the summation by parts formula \eqref{equ:summation_by_parts}, we can deduce that
    	\begin{equation}
		\begin{aligned}
			&2\sum_{i=1}^{N}\tau\left\|D_t\left(u_h^i-I_hT_hu^i\right)\right\|_{L^2(\mathcal{M}_h)}^2+2\sum_{i=0}^{N}\omega_i\tau\left\|\nabla_{g_h}\left(u_h^i-I_hT_h u^i\right)\right\|_{L^2(\mathcal{M}_h)}^2\\
			\lesssim&  2\sum_{i=0}^{N}\omega_i\tau h^{1+\min\{1,\sigma\}}{\left( \left\|u^i\right\|_{H^3(\mathcal{M})}+\left\|u^i\right\|_{W^{2}_{\infty}(\mathcal{M})}\right)
			}\left\|\nabla_{g_h}\left(u_h^i-I_h T_h u^i\right)\right\|_{L^2(\mathcal{M}_h)}\\
            &+2\sum_{i=1}^{N}\tau\int_{\mathcal{M}_h}D_{t}(T_hu^i-I_h T_h u^i)\cdot D_{t}(u_h^i-I_hT_h u^i)\,\mathrm{d}\sigma_{g_h}\\
            \leq&  h^{2+2\min\{1,\sigma\}}\sum_{i=0}^{N}\omega_i\tau{\left( \left\|u^i\right\|_{H^3(\mathcal{M})}^2+\left\|u^i\right\|_{W^{2}_{\infty}(\mathcal{M})}^2\right)
			}+\sum_{i=0}^{N}\omega_i \tau \left\|\nabla_{g_h}\left(u_h^i-I_hT_hu^i\right)\right\|_{L^2(\mathcal{M}_h)}^2\\
            &+\sum_{i=1}^{N}\tau \left\|D_t\left(T_h u^i-I_hT_h u^i\right)\right\|_{L^2(\mathcal{M}_h)}^2+\sum_{i=1}^{N}\tau\left\|D_t\left(u_h^i-I_hT_hu^i\right)\right\|_{L^2(\mathcal{M}_h)}^2\\
            \lesssim&   h^{2+2\min\{1,\sigma\}}\sum_{i=0}^{N}\omega_i\tau{\left( \left\|u^i\right\|_{H^3(\mathcal{M})}^2+\left\|u^i\right\|_{W^{2}_{\infty}(\mathcal{M})}^2\right)
			}+\sum_{i=0}^{N}\omega_i \tau \left\|\nabla_{g_h}\left(u_h^i-I_hT_hu^i\right)\right\|_{L^2(\mathcal{M}_h)}^2\\
            &+h^4\sum_{i=1}^{N}\tau \left\|D_t u^i\right\|_{H^2(\mathcal{M})}^2+\sum_{i=1}^{N}\tau\left\|D_t\left(u_h^i-I_hT_hu^i\right)\right\|_{L^2(\mathcal{M}_h)}^2,\\
		\end{aligned}
            \label{Proof:Theoremm:gradient_u_h-u_I_4-0}
	\end{equation}
        where the weights $\omega_i=1$ for $i=1,\ldots,N-1$ and $\omega_0=\omega_{N}=\frac{1}{2}$. The second last inequality follows from Young's inequality, whereas the last inequality derives from Theorem \ref{thm:spatial_interpoltation_approximation} and the boundedness of $T_h$ \eqref{equ:boundedness_of_T_h}.

        By transposing the terms of \eqref{Proof:Theoremm:gradient_u_h-u_I_4-0} and with the definition of $u_N$ \eqref{equ:semi-discrete_solution}, we obtain the following estimate
        \begin{equation}
            \begin{aligned}
                &\sum_{i=1}^{N}\tau\left\|D_t\left(u_h^i-I_hT_hu^i\right)\right\|_{L^2(\mathcal{M}_h)}^2+\sum_{i=0}^{N}\omega_i\tau\left\|\nabla_{g_h}\left(u_h^i-I_hT_h u^i\right)\right\|_{L^2(\mathcal{M}_h)}^2\\
                \lesssim &  h^{2+2\min\{1,\sigma\}}\left( \left\|u_N\right\|_{L^2(\mathcal{T};H^3(\mathcal{M}))}^2+\left\|u_N\right\|_{L^2(\mathcal{T};W^{2}_{\infty}(\mathcal{M}))}^2\right)+h^4\left\|\partial_t u_N\right\|_{L^2(\mathcal{T};H^2(\mathcal{M}))}^2.
            \end{aligned}
            \label{Proof:Theoremm:gradient_u_h-u_I_4}
        \end{equation}

    From \eqref{Proof:Theoremm:gradient_u_h-u_I_4}, using the definitions of $u_N$ and $u_h$ \eqref{equ:semi-discrete_solution}, \eqref{equ:full_discrete_solution}, we deduce
        \begin{equation}
            \begin{aligned}
                &\left\|\nabla_{g_h}\left(u_h-I_hT_hu_N\right)\right\|_{L^2(\mathcal{T};L^2(\mathcal{M}))}^2
                =\sum_{i=1}^{N}\int_{t_{i-1}}^{t_i}	\left\|\nabla_{g_h}\left(u_h^i-I_hT_hu^i\right)\right\|_{L^2(\mathcal{M}_h)}^2\,\mathrm{d}t\\
			=&\sum_{i=1}^{N}\int_{t_{i-1}}^{t_i}	\left\|\nabla_{g_h}\left(u_h^{i-1}-I_hT_hu^{i-1}\right)\phi_{i-1}(t)+\nabla_{g_h}\left(u_h^{i}-I_hT_hu^{i}\right)\phi_{i}(t)\right\|_{L^2(\mathcal{M}_h)}^2\,\mathrm{d}t\\
			\lesssim&\sum_{i=0}^{N}\tau\left\|\nabla_{g_h}\left(u_h^i-I_hT_h u^i\right)\right\|_{L^2(\mathcal{M}_h)}^2\\
			\lesssim&h^{2+2\min\{1,\sigma\}}\left( \left\|u_N\right\|_{L^2(\mathcal{T};H^3(\mathcal{M}))}^2+\left\|u_N\right\|_{L^2(\mathcal{T};W^{2}_{\infty}(\mathcal{M}))}^2\right)+h^4\left\|\partial_t u_N\right\|_{L^2(\mathcal{T};H^2(\mathcal{M}))}^2.
            \end{aligned}
            \nonumber
        \end{equation}

Similarly, from \eqref{Proof:Theoremm:gradient_u_h-u_I_4} it follows that
	\begin{equation}
		\begin{aligned}
			&\left\|\partial_{t}\left(u_h-I_hT_hu_N\right)\right\|_{L^2(\mathcal{T};L^2(\mathcal{M}))}^2
            =\sum_{i=1}^{N}\int_{t_{i-1}}^{t_i}\left\|\partial_{t}\left(u_h^i-I_hT_hu^i\right)\right\|_{L^2(\mathcal{M}_h)}^2\,\mathrm{d}t\\
			=&\sum_{i=1}^{N}\tau\left\|D_{t}\left(u_h^i-I_hT_hu^i\right)\right\|_{L^2(\mathcal{M}_h)}^2\\
			\lesssim&h^{2+2\min\{1,\sigma\}}\left( \left\|u_N\right\|_{L^2(\mathcal{T};H^3(\mathcal{M}))}^2+\left\|u_N\right\|_{L^2(\mathcal{T};W^{2}_{\infty}(\mathcal{M}))}^2\right)+h^4\left\|\partial_t u_N\right\|_{L^2(\mathcal{T};H^2(\mathcal{M}))}^2.
		\end{aligned}
        \nonumber
	\end{equation}
	This completes the proof. 
\end{proof}

\subsection{Superconvergence of the recovered gradient}\label{ssec:srg}
In this subsection, we establish the superconvergence properties of the recovered gradient via decomposition into temporal and spatial components.

\subsubsection{Temporal superconvergence analysis}
We begin by analyzing the error between the exact temporal derivative and the PPR recovered temporal gradient obtained from the finite difference method (FDM) solution
\begin{theorem}\label{Theorem:G_tau_u_N}
	Let $u\in H^4(\mathcal{T}; L^2(\mathcal{M}))\cap H^2(\mathcal{T};H^2(\mathcal{M}))$ denote the exact solution to \eqref{equ:model}, $u_N$ be the semi-discrete FDM solution defined in \eqref{equ:semi-discrete_solution}, and let $G_{\tau}$ denote the temporal PPR operator. Then the recovered gradient satisfies
\begin{equation}        \label{equ:G_tau_u_N}
		\|\partial_t u-G_{\tau}u_N\|_{L^2(\mathcal{T};L^2(\mathcal{M}))}^2\lesssim \tau^4\left( \left\|\nabla_g\Delta_g^{-1}\partial_{tt}f\right\|_{L^2(\mathcal{T};L^2(\mathcal{M}))}^2+ \|u\|_{H^3(\mathcal{T};L^2(\mathcal{M}))}^2 \right).
	\end{equation}
\end{theorem}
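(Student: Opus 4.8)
The plan is to insert the temporal Lagrange interpolant $I_{\tau}u$ and split the error with the triangle inequality, exploiting the linearity of $G_{\tau}$:
\begin{equation*}
\|\partial_t u - G_{\tau}u_N\|_{L^2(\mathcal{T};L^2(\mathcal{M}))} \le \|\partial_t u - G_{\tau}I_{\tau}u\|_{L^2(\mathcal{T};L^2(\mathcal{M}))} + \|G_{\tau}(I_{\tau}u - u_N)\|_{L^2(\mathcal{T};L^2(\mathcal{M}))}.
\end{equation*}
The first summand is a pure consistency error of the recovery operator acting on the exact solution, while the second is $G_{\tau}$ applied to the temporal supercloseness error $I_{\tau}u - u_N$, which is already controlled by Lemma~\ref{lem:temporal}. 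Because both $I_{\tau}$ and $G_{\tau}$ act only in the temporal variable, I would transfer the scalar estimates of Section~\ref{sssec:ppr} to the Bochner setting slicewise: apply each bound for almost every $x\in\mathcal{M}$ and integrate its square against $\mathrm{d}\sigma_g$, using Fubini to identify $\int_{\mathcal{M}}\|\cdot\|_{L^2(\mathcal{T})}^2\,\mathrm{d}\sigma_g$ with $\|\cdot\|_{L^2(\mathcal{T};L^2(\mathcal{M}))}^2$ and $\int_{\mathcal{M}}\|u\|_{H^3(\mathcal{T})}^2\,\mathrm{d}\sigma_g$ with $\|u\|_{H^3(\mathcal{T};L^2(\mathcal{M}))}^2$.

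For the first summand this turns Lemma~\ref{lem:PPR_consistency} into $\|\partial_t u - G_{\tau}I_{\tau}u\|_{L^2(\mathcal{T};L^2(\mathcal{M}))}\lesssim \tau^2\|u\|_{H^3(\mathcal{T};L^2(\mathcal{M}))}$, which after squaring gives exactly the $\tau^4\|u\|_{H^3(\mathcal{T};L^2(\mathcal{M}))}^2$ contribution on the right-hand side. For the second summand I would first apply the boundedness of $G_{\tau}$ (Lemma~\ref{lem:G_tau_boundedness}), again slicewise, to obtain $\|G_{\tau}(I_{\tau}u - u_N)\|_{L^2(\mathcal{T};L^2(\mathcal{M}))}\lesssim \|\partial_t(I_{\tau}u - u_N)\|_{L^2(\mathcal{T};L^2(\mathcal{M}))}$, and then invoke the temporal supercloseness estimate of Lemma~\ref{lem:temporal} to bound the right-hand side by $\tau^2$ times the three quantities $\|\nabla_g\partial_{tt}u\|$, $\|\nabla_g\Delta_g^{-1}\partial_{tt}f\|$ and $\|\partial_{ttt}u\|$ in $L^2(\mathcal{T};L^2(\mathcal{M}))$. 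Squaring and summing the two summands then yields the claimed estimate, with $\|\partial_{ttt}u\|^2$ absorbed into $\|u\|_{H^3(\mathcal{T};L^2(\mathcal{M}))}^2$ and the source term matched directly.

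The step needing the most care is reconciling the leftover $\|\nabla_g\partial_{tt}u\|^2$ produced by Lemma~\ref{lem:temporal} with the right-hand side, which is written only through $\|\nabla_g\Delta_g^{-1}\partial_{tt}f\|^2$ and $\|u\|_{H^3(\mathcal{T};L^2(\mathcal{M}))}^2$. Here I would differentiate the model equation~\eqref{equ:model} twice in time and apply $\Delta_g^{-1}$ to obtain the identity $\nabla_g\partial_{tt}u+\nabla_g\Delta_g^{-1}\partial_{tt}f=-\nabla_g\Delta_g^{-1}\partial_{tttt}u$, so that the solution and source contributions are really a single combined quantity rather than two independent terms. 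Combined with the elliptic smoothing bound $\|\nabla_g\Delta_g^{-1}\psi\|_{L^2(\mathcal{M})}\lesssim\|\psi\|_{L^2(\mathcal{M})}$ and the temporal regularity $u\in H^4(\mathcal{T};L^2(\mathcal{M}))$ assumed in the hypotheses, this trades the spatial derivative in $\nabla_g\partial_{tt}u$ for a higher temporal derivative of $u$ plus the source norm, which is what allows the $\|\nabla_g\partial_{tt}u\|$ term to be folded into the stated bound.

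I expect this reconciliation, together with the careful slicewise-to-Bochner passage for the scalar PPR lemmas (ensuring that the nodal operators $I_{\tau}$, $G_{\tau}$ commute with evaluation at each $x$ and that the resulting fiberwise estimates integrate to the Bochner norms), to be the only genuinely delicate points. The remainder of the argument is routine bookkeeping with Young's inequality and the triangle inequality, and I would present it directly rather than grinding through the constants.
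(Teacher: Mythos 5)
Your proposal follows essentially the same route as the paper's proof: the identical decomposition $\partial_t u - G_{\tau}u_N = (\partial_t u - G_{\tau}I_{\tau}u) + G_{\tau}(I_{\tau}u - u_N)$, with Lemma~\ref{lem:PPR_consistency} handling the consistency part and Lemma~\ref{lem:G_tau_boundedness} combined with Lemma~\ref{lem:temporal} handling the supercloseness part, followed by the same slicewise-to-Bochner integration. The one place you go beyond the paper is the reconciliation of the leftover $\left\|\nabla_g\partial_{tt}u\right\|^2$ term: the paper's own proof produces this term in \eqref{G_tau_decomposition_latter} and then silently omits it when asserting \eqref{equ:G_tau_u_N}, whereas you explicitly remove it via the twice-differentiated equation, the identity $\nabla_g\partial_{tt}u+\nabla_g\Delta_g^{-1}\partial_{tt}f=-\nabla_g\Delta_g^{-1}\partial_{tttt}u$, and the bound $\left\|\nabla_g\Delta_g^{-1}\psi\right\|_{L^2(\mathcal{M})}\lesssim\left\|\psi\right\|_{L^2(\mathcal{M})}$. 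Note only that this leaves a $\tau^4\left\|\partial_{tttt}u\right\|_{L^2(\mathcal{T};L^2(\mathcal{M}))}^2$ contribution, i.e.\ an $H^4(\mathcal{T};L^2(\mathcal{M}))$-norm of $u$ rather than the $H^3$ norm written in \eqref{equ:G_tau_u_N}; this is covered by the theorem's hypothesis $u\in H^4(\mathcal{T};L^2(\mathcal{M}))$, and is in any case tighter bookkeeping than the paper, which never accounts for that term at all.
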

\begin{proof}
We decompose the term $\partial_t u - G_{\tau} u_N$ as
	\begin{equation}
		\partial_t u-G_{\tau} u_N=\underbrace{\partial_t u-G_{\tau}I_{\tau}u}_{I_1}+\underbrace{G_{\tau}I_{\tau}u-G_{\tau}u_N}_{I_2}.
        \label{G_tau_decomposition}
	\end{equation}
	
	We first estimate the term $I_2$. By invoking Lemma~\ref{lem:temporal} and Lemma~\ref{lem:G_tau_boundedness}, we obtain
		\begin{equation}
		\begin{aligned}
			&\|G_{\tau}I_{\tau}u-G_{\tau}u_N\|_{L^2(\mathcal{T};L^2(\mathcal{M}))}^2
			=\|G_{\tau}(I_{\tau}u-u_N)\|_{L^2(\mathcal{T};L^2(\mathcal{M}))}^2\\
			\lesssim& \|\partial_t(I_{\tau}u- u_N)\|_{L^2(\mathcal{T};L^2(\mathcal{M}))}^2\\
			\lesssim& \tau^4\left( \left\|\nabla_g \partial_{tt}u\right\|_{L^2(\mathcal{T};L^2(\mathcal{M}))}^2+\left\|\nabla_g\Delta_g^{-1}\partial_{tt}f\right\|_{L^2(\mathcal{T};L^2(\mathcal{M}))}^2+\left\|\partial_{ttt}u\right\|_{L^2(\mathcal{T};L^2(\mathcal{M}))}^2\right).
		\end{aligned}
		\label{G_tau_decomposition_latter}
	\end{equation}

For the term $I_1$, the consistency of the temporal PPR gradient recovery operator $G_{\tau}$ (see Lemma~\ref{lem:PPR_consistency}) implies
\begin{equation}
		\left\|\partial_t u-G_{\tau}I_{\tau}u\right\|_{L^2(\mathcal{T};L^2(\mathcal{M}))}^2\lesssim \tau^4\|u\|_{H^3(\mathcal{T};L^2(\mathcal{M}))}^2.
		\label{G_tau_decomposition_former}
\end{equation}

Finally, estimate \eqref{equ:G_tau_u_N} follows by combining the bounds \eqref{G_tau_decomposition_latter} and \eqref{G_tau_decomposition_former}.
\end{proof}

With the above preparation, we are now ready to present our main superconvergence in temporal direction.
\begin{theorem}    \label{thm:time_superconvergence}
   Suppose the discrete surface $\mathcal{M}_h$ satisfies the $\mathcal{O}(h^{2\sigma})$ irregular condition. Let $u\in H^4(\mathcal{T}; L^2(\mathcal{M}))\cap H^2(\mathcal{T}; H^2(\mathcal{M}))$ denote the exact solution to \eqref{equ:model}, $u_N$ be the semi-discrete FDM solution defined in \eqref{equ:semi-discrete_solution} with $u^i\in H^3(\mathcal{M})\cap W^{2}_{\infty}(\mathcal{M})$, and $u_h$ be the FDM-sFEM solution defined in \eqref{equ:full_discrete}. Then there holds
    \begin{equation}
        \begin{aligned}
            &\left\|\partial_t u-G_{\tau} T_h^{-1}u_h\right\|_{L^2(\mathcal{T};L^2(\mathcal{M}))}^2\\
            \lesssim& \tau^4\left( \left\|\nabla_g\Delta_g^{-1}\partial_{tt}f\right\|_{L^2(\mathcal{T};L^2(\mathcal{M}))}^2+ \|u\|_{H^3(\mathcal{T};L^2(\mathcal{M}))}^2 \right)+h^4\left\|\partial_t u_N\right\|_{L^2(\mathcal{T};H^2(\mathcal{M}))}^2\\
            &+h^{2+2\min\{1,\sigma\}}\left(\left\|u_N\right\|_{L^2(\mathcal{T};H^3(\mathcal{M}))}^2+\left\|u_N\right\|_{L^2(\mathcal{T};W^{2}_{\infty}(\mathcal{M}))}^2\right).
        \end{aligned}
        \label{eq:G_tau_u_h}
    \end{equation}
\end{theorem}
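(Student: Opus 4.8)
The plan is to combine the temporal superconvergence estimate already obtained in Theorem~\ref{Theorem:G_tau_u_N} with the spatial supercloseness of Theorem~\ref{thm:spatial_supercloseness}, using the linearity and boundedness of the temporal recovery operator $G_{\tau}$ to transfer the spatial error into the recovered gradient. First I would split the error by inserting the semi-discrete solution $u_N$:
\[
\partial_t u - G_{\tau} T_h^{-1} u_h
= \left(\partial_t u - G_{\tau} u_N\right) + G_{\tau}\left(u_N - T_h^{-1} u_h\right),
\]
where the last term uses the linearity of $G_{\tau}$. The first bracket is controlled directly by Theorem~\ref{Theorem:G_tau_u_N}, which supplies the $\tau^4$ contributions on the right-hand side of \eqref{eq:G_tau_u_h}. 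It therefore remains to estimate the spatial discrepancy $G_{\tau}(u_N - T_h^{-1} u_h)$ and to show that it produces the $h$-terms.

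For that term, the key observation is that $G_{\tau}$ acts only in the time variable and nodally, so the boundedness Lemma~\ref{lem:G_tau_boundedness}, which is stated pointwise in $x$, can be squared and integrated over $\mathcal{M}$ to yield the Bochner bound $\|G_{\tau} w\|_{L^2(\mathcal{T};L^2(\mathcal{M}))} \lesssim \|\partial_t w\|_{L^2(\mathcal{T};L^2(\mathcal{M}))}$ for any $w \in H^1(\mathcal{T};L^2(\mathcal{M}))$. Applying this with $w = u_N - T_h^{-1} u_h$, which is piecewise linear, hence admissible, in time, reduces the task to estimating $\|\partial_t(u_N - T_h^{-1} u_h)\|_{L^2(\mathcal{T};L^2(\mathcal{M}))}$.

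Next I would decompose this remaining quantity through the interpolant $I_h T_h u_N$:
\[
u_N - T_h^{-1} u_h
= T_h^{-1}\left(T_h u_N - I_h T_h u_N\right) + T_h^{-1}\left(I_h T_h u_N - u_h\right).
\]
Since $T_h$ and $I_h$ act in space while $\partial_t$ acts in time, these operators commute; combined with the uniform boundedness of $T_h^{-1}$ from Lemma~\ref{lem:boundedness_of_T_h} and \eqref{equ:boundedness_of_T_h}, the time derivative of the first piece is the lift of $(T_h - I_h T_h)\partial_t u_N$, which Theorem~\ref{thm:spatial_interpoltation_approximation} bounds by $h^2\|\partial_t u_N\|_{H^2(\mathcal{M})}$ at each time, contributing the $h^4\|\partial_t u_N\|^2_{L^2(\mathcal{T};H^2(\mathcal{M}))}$ term after integration in $t$. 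The second piece equals $\partial_t(I_h T_h u_N - u_h)$ transported by $T_h^{-1}$, whose squared $L^2(\mathcal{T};L^2)$ norm is exactly controlled by the spatial supercloseness estimate \eqref{equ:spatial_supercloseness} of Theorem~\ref{thm:spatial_supercloseness}, supplying the $h^{2+2\min\{1,\sigma\}}$ and $h^4$ terms. Collecting the three contributions and squaring gives \eqref{eq:G_tau_u_h}.

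The step I expect to require the most care is the justification that the scalar boundedness Lemma~\ref{lem:G_tau_boundedness} lifts to the vector-valued setting $H^1(\mathcal{T};L^2(\mathcal{M}))$ and interacts correctly with the spatial operators $T_h^{-1}$ and $I_h$: one must verify that $G_{\tau}$, $I_h$, and $T_h^{-1}$ act on independent variables so that $\partial_t$ passes through them cleanly, and that the hidden constants remain independent of both $\tau$ and $h$. Once this commutation and the Bochner lift are secured, the remaining estimates are routine applications of the cited interpolation and supercloseness bounds.
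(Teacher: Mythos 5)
Your proposal is correct and follows essentially the same route as the paper: the paper likewise splits off $\partial_t u - G_{\tau}u_N$ (handled by Theorem~\ref{Theorem:G_tau_u_N}) and then inserts the interpolant $I_h T_h u_N$, using the boundedness of $G_{\tau}$ and $T_h^{-1}$, the commutation of $\partial_t$ with the spatial operators, Theorem~\ref{thm:spatial_interpoltation_approximation}, and the spatial supercloseness of Theorem~\ref{thm:spatial_supercloseness}. The only difference is cosmetic—you apply the $G_{\tau}$-boundedness once before splitting spatially, while the paper splits into three terms first and bounds $G_{\tau}$ on each.
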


\begin{proof}
We decompose the error as
    \begin{equation*}
        \begin{aligned}
            \partial_t u-G_{\tau} T_h^{-1} u_h=\underbrace{\left(\partial_t u -G_{\tau}u_N\right)}_{I_1}+\underbrace{T_h^{-1}\left(G_{\tau}T_hu_N-G_{\tau}I_hT_hu_N\right)}_{I_2}+\underbrace{T_h^{-1}\left(G_{\tau}I_hT_hu_N-G_{\tau}u_h\right)}_{I_3}.
        \end{aligned}
    \end{equation*}
    
The term $I_1$ has already been estimated in Theorem~\ref{Theorem:G_tau_u_N}. 
To estimate $I_2$, we recall that $I_h$ denotes the standard Lagrange interpolation operator into the surface finite element space $S_h$. By using Lemma~\ref{lem:boundedness_of_T_h}, Lemma~\ref{lem:G_tau_boundedness}, and Theorem~\ref{thm:spatial_interpoltation_approximation}, we obtain
    \begin{equation}
        \begin{aligned}
            &\left\|T_h^{-1}\left(G_{\tau}T_hu_N-G_{\tau}I_hT_hu_N\right)\right\|_{L^2(\mathcal{T};L^2(\mathcal{M}))}=\left\|T_h^{-1}G_{\tau}\left(T_hu_N-I_hT_h u_N\right)\right\|_{L^2(\mathcal{T};L^2(\mathcal{M}))}\\
            \lesssim& \left\|\partial_t\left(T_hu_N-I_hT_h u_N\right)\right\|_{L^2(\mathcal{T};L^2(\mathcal{M}))}
            =\left\|T_h\partial_tu_N-I_hT_h \partial_tu_N\right\|_{L^2(\mathcal{T};L^2(\mathcal{M}))}\\
            \lesssim&h^2\left\|\partial_tu_N\right\|_{L^2(\mathcal{T};H^2(\mathcal{M}))}.
        \end{aligned}
        \label{proof:G_tau_u_h_e1}
    \end{equation} 
    
To estimate $I_3$, we employ the boundedness of the PPR gradient recovery operator $G_{\tau}$ from Lemma~\ref{lem:G_tau_boundedness} and the spatial supercloseness result from Theorem~\ref{thm:spatial_supercloseness}, which yield
    \begin{equation}
        \begin{aligned}
           & \left\|T_h^{-1}\left(G_{\tau}I_hT_hu_N-G_{\tau}u_h\right)\right\|_{L^2(\mathcal{T};L^2(\mathcal{M}))}^2=\left\|T_h^{-1}G_{\tau}(I_h T_hu_N-u_h)\right\|_{L^2(\mathcal{T};L^2(\mathcal{M}))}^2\\
            \lesssim& \left\|\partial_t\left(I_hT_hu_N-u_h\right)\right\|_{L^2(\mathcal{T};L^2(\mathcal{M}))}^22\\
            \lesssim&h^{2+2\min\{1,\sigma\}}\left(\left\|u_N\right\|_{L^2(T;H^3(\mathcal{M}))}^2+\left\|u_N\right\|_{L^2(T;W^{2}_{\infty}(\mathcal{M}))}^2\right)+h^4\left\|\partial_t u_N\right\|_{L^2(\mathcal{T};H^2(\mathcal{M}))}^2.
        \end{aligned}
        \label{proof:G_tau_u_h_e2}
    \end{equation}
    
Combining the estimates \eqref{proof:G_tau_u_h_e1} and \eqref{proof:G_tau_u_h_e2} with the result of Theorem~\ref{Theorem:G_tau_u_N} yields the desired estimate \eqref{eq:G_tau_u_h}, thus completing the proof.
\end{proof}

\subsubsection{Spatial superconvergence analysis}
In this subsection, we establish the superconvergence of the recovered surface gradient using the PPPR gradient recovery operator.  We begin with the following superconvergence results.

\begin{theorem}\label{thm:spatial_semi_superconvergence}
     Suppose the discrete surface $\mathcal{M}_h$ satisfies the $\mathcal{O}(h^{2\sigma})$ irregular condition. Let $u_N$ be the semi-discrete FDM solution defined in \eqref{equ:semi-discrete_solution} with $u^i\in H^3(\mathcal{M})\cap W_{\infty}^{3}(\mathcal{M})$ and $u_h$ be the FDM-sFEM solution defined in \eqref{equ:full_discrete}. Then there holds
    \begin{equation}\label{equ:semi_superconvergence}
        \begin{aligned}
            \left\|\nabla_g u_N-T_h^{-1}G_hu_h\right\|_{L^2(\mathcal{T};L^{2}(\mathcal{M}))}^2
            \lesssim&  h^4\left(\left\|u_N\right\|_{L^2(\mathcal{T};W^{3}_{\infty}(\mathcal{M}))}^2+\left\|\partial_t u_N\right\|_{L^2(\mathcal{T};H^2(\mathcal{M}))}^2\right)\\
            &+h^{2+2\min\{1,\sigma\}}\left(\left\|u_N\right\|_{L^2(\mathcal{T};H^{3}(\mathcal{M}))}^2+\left\|u_N\right\|_{L^2(T;W^{2}_{\infty}(\mathcal{M}))}^2\right).
        \end{aligned}
    \end{equation}
\end{theorem}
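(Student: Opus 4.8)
The plan is to bound the recovery error by a standard add-and-subtract decomposition that separates the consistency error of the PPPR operator from the supercloseness error already controlled spatially. Concretely, I would insert the intermediate object $T_h^{-1}G_h(I_hT_hu_N)$ and write
\begin{equation*}
\nabla_g u_N - T_h^{-1}G_h u_h
= \underbrace{\left(\nabla_g u_N - T_h^{-1}G_h I_hT_hu_N\right)}_{E_1}
+ \underbrace{T_h^{-1}G_h\left(I_hT_hu_N - u_h\right)}_{E_2}.
\end{equation*}
Because the recovery operator $G_h$ and the lift $T_h^{-1}$ act only in the spatial variable while $u_N$ and $u_h$ are piecewise linear in time through the shared basis $\{\phi^i\}$, both $E_1$ and $E_2$ are themselves piecewise linear in time, with nodal values $\nabla_g u^i - T_h^{-1}G_h I_hT_hu^i$ and $T_h^{-1}G_h(I_hT_hu^i - u_h^i)$ respectively. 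This lets me treat the two pieces by applying the spatial estimates at each node and then integrating in time.

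For $E_1$ I would invoke the PPPR consistency estimate, Lemma~\ref{lem:pppr_consistency}, at each time node to get $\| \nabla_g u^i - T_h^{-1}G_h I_hT_hu^i \|_{L^2(\mathcal{M})} \lesssim h^2\|u^i\|_{W^3_\infty(\mathcal{M})}$, and then pass from the nodal bounds to the $L^2(\mathcal{T};L^2(\mathcal{M}))$ norm. The passage rests on the elementary norm equivalence $\sum_i \tau\|v^i\|_X^2 \lesssim \| \sum_i v^i\phi^i \|_{L^2(\mathcal{T};X)}^2 \lesssim \sum_i \tau\|v^i\|_X^2$ for functions piecewise linear in time, which follows from $\int_0^1 (a(1-s)+bs)^2\,\mathrm{d}s = (a^2+ab+b^2)/3$. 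This yields $\|E_1\|_{L^2(\mathcal{T};L^2(\mathcal{M}))}^2 \lesssim h^4\|u_N\|_{L^2(\mathcal{T};W^3_\infty(\mathcal{M}))}^2$, the first term on the right-hand side.

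For $E_2$ I would apply the boundedness of the lift $T_h^{-1}$ (Lemma~\ref{lem:boundedness_of_T_h}) followed by the boundedness of the PPPR operator $G_h$ (Lemma~\ref{lem:pppr_bounded}); these hold at each fixed $t$ since $I_hT_hu_N - u_h$ belongs to $S_h$ for every $t$, so squaring and integrating in $t$ gives
\begin{equation*}
\|E_2\|_{L^2(\mathcal{T};L^2(\mathcal{M}))}^2
\lesssim \left\| \nabla_{g_h}(I_hT_hu_N - u_h) \right\|_{L^2(\mathcal{T};L^2(\mathcal{M}))}^2.
\end{equation*}
The right-hand side is precisely the quantity bounded in the spatial supercloseness result, Theorem~\ref{thm:spatial_supercloseness}, which contributes the $h^{2+2\min\{1,\sigma\}}$ term together with $h^4\|\partial_t u_N\|_{L^2(\mathcal{T};H^2(\mathcal{M}))}^2$. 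Combining $\|E_1\|^2$ and $\|E_2\|^2$ through the triangle inequality then reproduces the claimed bound.

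Since the argument is essentially the concatenation of three previously established estimates, I expect no deep obstacle; the only point requiring care is the commutation of the spatial operators $G_h, T_h^{-1}$ with the temporal piecewise-linear structure and the attendant bookkeeping between nodal sums $\sum_i \tau\|\cdot^{\,i}\|^2$ and genuine space-time $L^2(\mathcal{T};\cdot)$ norms. I also note that the hypothesis $u^i \in W^3_\infty(\mathcal{M})$ here is stronger than the $W^2_\infty$ used in Theorem~\ref{thm:spatial_supercloseness}; this strengthening is exactly what the PPPR consistency lemma demands and is the sole extra regularity assumption.
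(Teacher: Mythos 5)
Your proposal is correct and follows essentially the same route as the paper: the identical splitting via the intermediate term $T_h^{-1}G_h(I_hT_hu_N)$, with the PPPR consistency estimate (Lemma~\ref{lem:pppr_consistency}) applied nodewise and summed against the temporal piecewise-linear structure for the first piece, and the boundedness of $G_h$ (Lemma~\ref{lem:pppr_bounded}) combined with the spatial supercloseness of Theorem~\ref{thm:spatial_supercloseness} for the second. Your bookkeeping between nodal sums $\sum_i \tau\|\cdot\|^2$ and the $L^2(\mathcal{T};L^2(\mathcal{M}))$ norm, and your observation that $W^3_\infty$ is the extra regularity forced by the consistency lemma, both match the paper's argument.
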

\begin{proof}
We begin by decomposing the error as
    \begin{equation}
        \begin{aligned}
            \nabla_g u_N-T_h^{-1}G_hu_h=\underbrace{\nabla_g u_N-T_h^{-1}G_h\left(I_hT_hu_N\right)}_{I_1}+\underbrace{T_h^{-1}G_h\left(I_hT_hu_N\right)-T_h^{-1}G_hu_h}_{I_2}.
        \end{aligned}
        \label{Proof:Supercloseness_of_G_h_e1}
    \end{equation}
    
To estimate the first term $I_1$, we employ the consistency of the PPPR gradient recovery operator $G_h$ established in \eqref{equ:pppr_consistency},
    \begin{equation}
        \left\|\nabla_g u^i-T_h^{-1}G_h \left(I_h T_h u^i\right)\right\|_{L^2(\mathcal{M})}\leq h^2\sqrt{\mathcal{A}(\mathcal{M})}D(g,g^{-1})\left\|u^i\right\|_{W^{3}_{\infty}(\mathcal{M})},
    \end{equation}
    for $i=0,\ldots,N$. 
    
    Using the definition of semi-discrete solution $u_N$ \eqref{equ:semi-discrete_solution}, we obtain
    \begin{equation}
        \begin{aligned}
            &\left\|\nabla_g u_N-T_h^{-1}G_h \left(I_h T_h u_N\right)\right\|_{L^2(\mathcal{T};L^2(\mathcal{M}))}^2
            =\sum_{i=1}^{N}\int_{t_{i-1}}^{t_i}\left\|\nabla_g u_N-T_h^{-1}G_h \left(I_h T_h u_N\right)\right\|_{L^2(\mathcal{M})}^2\,\mathrm{d}t\\
            =&\sum_{i=1}^{N}\int_{t_{i-1}}^{t_i}\left\|\left(\nabla_g u^{i-1}-T_h^{-1}G_h \left(I_h T_h u^{i-1}\right)\right)\phi_{i-1}(t)+\left(\nabla_g u^{i}-T_h^{-1}G_h \left(I_h T_h u^{i}\right)\right)\phi_{i}(t)\right\|_{L^2(\mathcal{M})}^2\,\mathrm{d}t\\
            \lesssim&\sum_{i=0}^{N}\tau\left\|\nabla_g u^{i}-T_h^{-1}G_h \left(I_h T_h u^{i}\right)\right\|_{L^2(\mathcal{M})}^2
            \\
            \leq& h^4\mathcal{A}(\mathcal{M})D(g,g^{-1})^2\left\|u_N\right\|_{L^2(T;W^{3}_{\infty}(\mathcal{M}))}^2.
        \end{aligned}
         \label{Proof:Supercloseness_of_G_h_e2}
    \end{equation}
    
For the second term $I_2$, we use the boundedness of the operator $G_h$ together with the spatial supercloseness result in Theorem~\ref{thm:spatial_supercloseness}, which yields
    \begin{equation}
        \begin{aligned}
            &\left\|T_h^{-1}G_h\left(I_hT_hu_N\right)-T_h^{-1}G_hu_h\right\|_{L^2(\mathcal{T};L^2(\mathcal{M}))}^2
            =\left\|T_h^{-1}G_h\left(I_hT_hu_N-u_h\right)\right\|_{L^2(\mathcal{T};L^2(\mathcal{M}))}^2\\
            \lesssim&\left\|\nabla_{g_h}\left(I_hT_hu_N-u_h\right)\right\|_{L^2(\mathcal{T};L^2(\mathcal{M}))}^2\\
            \lesssim&h^{2+2\min\{1,\sigma\}}\left(\left\|u_N\right\|_{L^2(T;H^3(\mathcal{M}))}^2+\left\|u_N\right\|_{L^2(T;W^{2}_{\infty}(\mathcal{M}))}^2\right)+h^4\left\|\partial_t u_N\right\|_{L^2(\mathcal{T};H^2(\mathcal{M}))}^2.
        \end{aligned}
         \label{Proof:Supercloseness_of_G_h_e3}
    \end{equation}
    
Combining the estimates \eqref{Proof:Supercloseness_of_G_h_e2} and \eqref{Proof:Supercloseness_of_G_h_e3} completes the proof of \eqref{equ:semi_superconvergence}.
\end{proof}

We are now in a position to state our main superconvergence result for the recovered surface gradient.
\begin{theorem}    \label{thm:space_superconvergence}
       Suppose the discrete surface $\mathcal{M}_h$ satisfies the $\mathcal{O}(h^{2\sigma})$ irregular condition. Let $u\in H^4(\mathcal{T}; L^2(\mathcal{M}))\cap H^2(\mathcal{T};H^2(\mathcal{M})$ denote the exact solution to \eqref{equ:model}, $u_N$ be the semi-discrete FDM solution defined in \eqref{equ:semi-discrete_solution} with $u^i\in H^3(\mathcal{M})\cap W_{\infty}^{3}(\mathcal{M})$, and $u_h$ be the FDM-sFEM solution defined in \eqref{equ:full_discrete}. Then there holds
    \begin{equation}\label{equ:main_superconvergence_result}
        \begin{aligned}
            &\left\|\nabla_g u-T_h^{-1}G_h u_h \right\|_{L^2(\mathcal{T};L^2(\mathcal{M}))}^2\\
            \lesssim& h^4\left(\left\|u_N\right\|_{L^2(T;W^{3}_{\infty}(\mathcal{M}))}^2+\left\|\partial_t u_N\right\|_{L^2(\mathcal{T};H^2(\mathcal{M}))}^2\right)\\
            &+h^{2+2\min\{1,\sigma\}}\left(\left\|u_N\right\|_{L^2(T;H^3(\mathcal{M}))}^2+\left\|u_N\right\|_{L^2(T;W^{2}_{\infty}(\mathcal{M}))}^2\right)\\
            &+c\tau^4\left( \left\|\nabla_g \partial_{tt}u\right\|_{L^2(\mathcal{T};L^2(\mathcal{M}))}^2+\left\|\nabla_g\Delta_g^{-1}\partial_{tt}f\right\|_{L^2(\mathcal{T};L^2(\mathcal{M}))}^2+\left\|\partial_{ttt}u\right\|_{L^2(\mathcal{T};L^2(\mathcal{M}))}^2\right).
        \end{aligned}
    \end{equation}
\end{theorem}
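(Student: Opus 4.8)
The plan is to reduce the claim to two results already in hand by inserting the semi-discrete solution $u_N$ as an intermediate object. First I would write the error as
\[
\nabla_g u - T_h^{-1}G_h u_h = \underbrace{\nabla_g(u-u_N)}_{\text{temporal}} + \underbrace{\big(\nabla_g u_N - T_h^{-1}G_h u_h\big)}_{\text{spatial}},
\]
and use the elementary bound $\|a+b\|^2 \lesssim \|a\|^2 + \|b\|^2$, so that it suffices to estimate the squared $L^2(\mathcal{T};L^2(\mathcal{M}))$ norm of each piece separately. The spatial piece is precisely the left-hand side of Theorem~\ref{thm:spatial_semi_superconvergence}, and invoking that theorem immediately supplies the $h^4$ and $h^{2+2\min\{1,\sigma\}}$ terms of \eqref{equ:main_superconvergence_result}. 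Thus the only genuinely new work is the purely temporal error $\nabla_g(u-u_N)$.

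For this temporal piece I would interpose the temporal interpolant and split $\nabla_g(u-u_N) = \nabla_g(u - I_\tau u) + \nabla_g(I_\tau u - u_N)$, again using $\|a+b\|^2\lesssim\|a\|^2+\|b\|^2$. The second summand is controlled directly by the temporal supercloseness estimate of Lemma~\ref{lem:temporal}, whose right-hand side is exactly the $c\tau^4(\cdots)$ expression appearing in the theorem. For the first summand I would use the key observation that the spatial operator $\nabla_g$ commutes with the temporal interpolation $I_\tau$, since each temporal basis function $\phi_i(t)$ depends only on $t$; hence $\nabla_g(u - I_\tau u) = \nabla_g u - I_\tau(\nabla_g u)$. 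Applying the interpolation error bounds of Lemma~\ref{lem:closeness_temporal_interpolation} componentwise to $\nabla_g u$, viewed as an element of $H^2(\mathcal{T};L^2(\mathcal{M}))$, yields $\|\nabla_g u - I_\tau \nabla_g u\|_{L^2(\mathcal{T};L^2(\mathcal{M}))} \lesssim \tau^2\|\partial_{tt}\nabla_g u\|_{L^2(\mathcal{T};L^2(\mathcal{M}))} = \tau^2\|\nabla_g\partial_{tt}u\|_{L^2(\mathcal{T};L^2(\mathcal{M}))}$, the square of which is already subsumed by the $c\tau^4$ term.

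Collecting the three contributions and absorbing constants then gives \eqref{equ:main_superconvergence_result}. I expect no serious obstacle: both heavy estimates—the spatial semi-superconvergence of Theorem~\ref{thm:spatial_semi_superconvergence} and the temporal supercloseness of Lemma~\ref{lem:temporal}—are already established, so the argument is essentially a structured triangle inequality. The only points requiring careful (but routine) justification are the commutation identity $\nabla_g I_\tau = I_\tau \nabla_g$ and the accompanying regularity bookkeeping ensuring $\partial_{tt}\nabla_g u \in L^2(\mathcal{T};L^2(\mathcal{M}))$, which is guaranteed by the hypothesis $u\in H^2(\mathcal{T};H^2(\mathcal{M}))$.
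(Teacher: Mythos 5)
Your proposal is correct and follows essentially the same route as the paper: your two-stage splitting collapses to exactly the paper's three-term decomposition $\nabla_g(u-I_\tau u)+\nabla_g(I_\tau u-u_N)+(\nabla_g u_N-T_h^{-1}G_h u_h)$, with each piece handled by the same results (Lemma~\ref{lem:closeness_temporal_interpolation}, Lemma~\ref{lem:temporal}, and Theorem~\ref{thm:spatial_semi_superconvergence}, respectively). Your explicit justification of the commutation $\nabla_g I_\tau = I_\tau \nabla_g$ is a detail the paper leaves implicit, but it is the right bookkeeping.
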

\begin{proof}
    We decompose $\nabla_gu-T_h^{-1}G_h u_h$ as 
    \begin{equation}
        \begin{aligned}
            \nabla_g u-T_h^{-1}G_h u_h=\underbrace{\nabla_g\left( u- I_{\tau}u\right)}_{I_1}+\underbrace{\nabla_g\left( I_{\tau}u- u_N\right)}_{I_2} +\underbrace{\left(\nabla_g u_N-T_h^{-1}G_h u_h\right)}_{I_3}.
        \end{aligned}
        \label{proof:Theorem:Superconvergence_of_G_h}
    \end{equation}
    
The contribution $I_1$ is bounded using the interpolation estimate in Lemma~\ref{lem:closeness_temporal_interpolation}, while the term $I_2$ is controlled by the temporal supercloseness result in Lemma~\ref{lem:temporal}. The estimate for $I_3$ has already been established in Theorem~\ref{thm:spatial_semi_superconvergence}. Combining the bounds for $I_1$, $I_2$, and $I_3$ yields the desired estimate \eqref{equ:main_superconvergence_result}, thereby completing the proof. 
\end{proof}

\section{Numerical Experiment}\label{sec:num}
In this section, we present a series of numerical experiments to evaluate the performance of the proposed method and to verify the superconvergent behavior of the post-processed gradients. The resulting linear systems are solved using the fast algorithm developed in \cite{dong2025ot}. In the first example, uniform meshes are generated by mapping rectangular grids onto the torus. For the remaining two examples, the initial triangulations are produced using the three-dimensional surface mesh generation module from the Computational Geometry Algorithms Library (CGAL). Finer meshes are obtained via uniform refinement followed by projection onto $\mathcal{M}$. In both cases, the projection map is explicitly available, and we employ a first-order approximation of the projection as described in \cite{demlow2009highorder}. As a result, the mesh vertices do not lie exactly on the surface $\mathcal{M}$ but instead reside within an $\mathcal{O}(h^2)$ neighborhood of it. This setup highlights the relevance and applicability of the proposed geometric error analysis framework in practical scenarios.
For clarity, we introduce the following notation for the errors:
\begin{equation*}
    \begin{aligned}
        &e=\left\|u_h-T_hu\right\|_{L^2(\mathcal{T}; L^2(\mathcal{M}_h))}, \quad\quad \quad\quad
         De=\left\|\nabla_{p_h}u_h-T_h\nabla_{p} u\right\|_{L^2(\mathcal{T}; L^2(\mathcal{M}_h))},\\
        &De^{T}_{2}=\left\|T_h\partial_t u-G_{\tau}u_h\right\|_{L^2(\mathcal{T}; L^2(\mathcal{M}_h))}, \quad 
        De^{\mathcal{M}}_{2}=\left\|T_h\nabla_gu-G_hu_h\right\|_{L^2(\mathcal{T}; L^2(\mathcal{M}_h))},
    \end{aligned}
\end{equation*}
where $\nabla_{\mathbf{p}} = (\partial_t, \nabla_g)$ denote the time-space derivative and $\nabla_{\mathbf{p_h}}$ is its discrete counterpart.

In our computations, convergence rates are evaluated with respect to the square root of the number of vertices, denoted by $N_v$. We also note that the time step size $\tau$ is halved at each level of mesh refinement, so that $h \approx \tau$ holds throughout the experiments.

\subsection{Numerical example 1}

In this example, we consider the case where the manifold $\mathcal{M}$ is a torus, defined implicitly via the level set function
\begin{equation}
\begin{aligned}
\varphi(x)=\sqrt{\left(\sqrt{x_1^2+x_2^2}-4\right)^2+x_3^2}-1.
\end{aligned}
\end{equation}
We consider a benchmark example of \eqref{equ:model} with exact solution to be prescribed by
\begin{equation*}
u(t,x) = x_1 x_2 e^t.
\end{equation*}
The corresponding source term $f$ and Neumann boundary condition are derived directly from the exact solution $u$.

\begin{table}[H]
    \centering
        \caption{Numerical results of time-space Laplace-Beltrami type equation for Numerical Example 1.}
        		\resizebox{\textwidth}{!}{
    \begin{tabular}{|c|c|c|c|c|c|c|c|c|}
        \hline
        \rule{0pt}{10pt}
        $N_v$ & $e$ & Order & $De$ & Order & $De_2^T$ & Order & $De_{2}^{\mathcal{M}}$ & Order \\
        \hline
        $200$   & $2.028$     & --     & $6.893e{+0}$     & --     & $1.934e{-1}$ & --     & $3.292$       & --     \\ \hline
        $800$   & $5.262e{-1}$ & $1.946$ & $3.504e{+0}$     & $0.976$ & $5.267e{-2}$ & $1.877$ & $8.798e{-1}$   & $1.904$ \\ \hline
        $3200$  & $1.328e{-1}$ & $1.987$ & $1.705e{+0}$     & $0.990$ & $1.061e{-2}$ & $2.312$ & $2.251e{-1}$   & $1.967$ \\ \hline
        $12800$ & $3.349e{-2}$ & $1.987$ & $8.543e{-1}$ & $0.997$ & $2.816e{-3}$ & $1.913$ & $5.668e{-2}$   & $1.990$ \\ \hline
    \end{tabular}}
    \label{tab:Torus_results}
\end{table}

The numerical results are reported in Table~\ref{tab:Torus_results}. As observed from the table, the $L^2$ error exhibits optimal convergence of order $\mathcal{O}(h^2 + \tau^2)$, while the $H^1$ error converges at the expected rate of $\mathcal{O}(h + \tau)$. Regarding the recovered gradients, we observe superconvergent behavior of order $\mathcal{O}(h^2 + \tau^2)$ for both $G_{\tau}u_h$ and $G_hu_h$, which is consistent with the theoretical results established in Theorem~\ref{thm:time_superconvergence} and Theorem~\ref{thm:space_superconvergence}.

\subsection{Numerical example 2}
In this example, we consider the model equation \eqref{equ:model} on a more general surface, following the settings in \cite{dziuk2013review, dong2020pppr}. The exact surface $\mathcal{M}$ is implicitly defined as the zero level set of the function
\begin{equation}
    \varphi(x)=\frac{1}{4}x_1^2+x_2^2+\frac{4x_3^2}{\left(1+\frac{1}{2}\sin(\pi x_1)\right)^2}-1.
\end{equation}
The source term $f$ and the Neumann boundary conditions $\mu_i$ ($i = 0, 1$) are prescribed such that the exact solution is given by $u = x_1 x_2 e^t$.

\begin{table}[H]
    \caption{Numerical results of time-space Laplace-Beltrami type equation for Numerical Example 2.}
    \centering
    		\resizebox{\textwidth}{!}{
    \begin{tabular}{|c|c|c|c|c|c|c|c|c|}
        \hline
        \rule{0pt}{10pt}
        $N_v$ & $e$ & Order & $De$ & Order & $De_2^T$ & Order & $De_{2}^{\mathcal{M}}$ & Order \\
        \hline
        $1153$  & $1.375e{-1}$ & --      & $1.366$     & --      & $6.221e{-2}$ & --      & $1.138$       & --     \\  \hline
        $4606$  & $5.864e{-2}$ & $1.229$ & $7.377e{-1}$ & $0.889$ & $2.591e{-2}$ & $1.264$ & $4.621e{-1}$  & $1.300$ \\  \hline
        $18418$ & $1.447e{-2}$ & $2.019$ & $3.741e{-1}$ & $0.980$ & $6.678e{-3}$ & $1.956$ & $1.571e{-1}$  & $1.557$ \\ \hline
        $73666$ & $3.936e{-3}$ & $1.878$ & $1.887e{-1}$ & $0.988$ & $1.799e{-3}$ & $1.893$ & $4.830e{-2}$  & $1.702$ \\ \hline
    \end{tabular}}
        \label{tab:Elliott_results}
\end{table}

The numerical errors with respect to the number of vertices $N_v$ are reported in Table~\ref{tab:Elliott_results}. From the table, it is evident that the $L^2$ error $e$ converges at a second-order rate, while the $H^1$ error $De$ exhibits first-order convergence. Moreover, superconvergent behavior is observed in the recovered time derivative $De_2^T$, with a rate of approximately $\mathcal{O}(h^{1.9} + \tau^{1.9})$, and in the recovered spatial gradient $De_2^{\mathcal{M}}$, with a rate of approximately $\mathcal{O}(h^{1.9} + \tau^{1.7})$. These findings are in agreement with our theoretical predictions.

\subsection{Numerical example 3}
In this example, we examine the model equation \eqref{equ:model} on a general surface defined implicitly by the level set function
\begin{equation}
    \varphi(x)=(x_1-x_3^2)^2+x_2^2+x_3^2-1.
\end{equation}
The exact solution is taken to be the same as in the previous example. As before, the source term and Neumann boundary data are derived consistently from the exact solution.

\begin{table}[H]   
    \centering
        \caption{Numerical results of time-space Laplace-Beltrami type equation for Numerical Example 3.}
 		\resizebox{\textwidth}{!}{
    \begin{tabular}{|c|c|c|c|c|c|c|c|c|}
        \hline
        \rule{0pt}{10pt}
        $N_v$ & $e$ & Order & $De$ & Order & $De_2^T$ & Order & $De_{2}^{\mathcal{M}}$ & Order \\
        \hline
        $243$    & $1.883e{-1}$ & --      & $1.341$     & --      & $5.840e{-2}$ & --      & $9.952e{-1}$  & --      \\ \hline
        $966$    & $5.412e{-2}$ & $1.799$ & $7.174e{-1}$ & $0.901$ & $1.923e{-2}$ & $1.602$ & $3.720e{-1}$  & $1.420$ \\ \hline
        $3858$   & $1.440e{-2}$ & $1.910$ & $3.664e{-1}$ & $0.969$ & $5.182e{-3}$ & $1.892$ & $1.178e{-1}$  & $1.659$ \\ \hline
        $15426$  & $3.697e{-3}$ & $1.962$ & $1.843e{-1}$ & $0.991$ & $1.332e{-3}$ & $1.961$ & $3.301e{-2}$  & $1.836$ \\ \hline
    \end{tabular}}
     \label{tab:Dziuk_results}
\end{table}

The error histories are summarized in Table~\ref{tab:Dziuk_results}. As in the previous example, optimal convergence rates are observed for both the $L^2$ error $e$ and the $H^1$ error $De$. For the recovered time derivative, a superconvergent rate of $\mathcal{O}(h^2)$ is achieved, while for the recovered surface gradient, a superconvergent rate of approximately $\mathcal{O}(h^{1.8})$ is observed.

\section{Conclusion}\label{sec:con}
We have studied the discretization of Laplace–Beltrami type equations on time-space manifolds. This class of equations arises from the solution process of dynamic optimal transport on general manifolds. We proposed a coupled FDM–sFEM method. By employing the PPR in the temporal interval and the PPPR on the spatial surface, we established the superconvergence theory of gradient recovery on product manifolds, which provides theoretical support for the gradient-enhanced ADMM algorithms proposed in \cite{dong2025ot}. In addition, we present a new geometric error analysis framework based on directly approximating the Riemannian metric. This framework is applicable in a more general setting where the discrete surface is not required to interpolate the exact surface, and exact geometric information—such as normal vectors—is not needed. Several numerical examples are provided to confirm our theoretical results. Moreover, the manifold $(0,1)\times \mathcal{M}$ represents a special case of the product manifolds $\mathcal{M}_1 \times \mathcal{M}_2$, on which the numerical analysis of PDEs will be of interest for future investigation.

\section*{Acknowledgments} 
The work of GD was partially supported by the National Natural Science Foundation of China (NSFC) grant No. 12471402, and the NSF of Hunan Province grant No. 2024JJ5413. The work of HG was partially supported by the Andrew Sisson Fund and the Faculty Science Researcher Development Grant of The University of Melbourne. The work of CJ and ZS was supported by the NSFC grant No. 92370125.

\section*{Data availability}
Code implemented in the paper to reproduce the numerical results will be available upon request.

\bibliographystyle{plainurl} 
\bibliography{references}

\end{document}